\newtheorem{thm}{Theorem}[section]
\newtheorem{lemma}[thm]{Lemma}
\newtheorem{corollary}[thm]{Corollary}
\newtheorem{prop}[thm]{Proposition}
\theoremstyle{definition}
\newtheorem{defn}[thm]{Definition}
\newtheorem{rem}[thm]{Remark}
\newcommand{\be}[1]{\begin{equation}\label{#1}}
\newcommand{\ee}{\end{equation}}
\newcommand{\ba}{\begin{array}}
\newcommand{\ea}{\end{array}}
\newcommand{\bal}{\begin{aligned}}
\newcommand{\eal}{\end{aligned}}
\newcommand{\R}{\mathbb{R}}
\newcommand{\N}{\mathbb{N}}
\newcommand{\Q}{\mathbb{Q}}
\newcommand{\Z}{\mathbb{Z}}
\newcommand{\E}{\mathbb{E}}
\newcommand{\p}{\mathbb{P}}
\newcommand{\calB}{\mathcal{B}}
\newcommand{\calC}{\mathcal{C}}
\newcommand{\calF}{\mathcal{F}}
\newcommand{\calM}{\mathcal{M}}
\newcommand{\cB}{\mathcal{B}}
\newcommand{\cC}{\mathcal{C}}
\newcommand{\cM}{\mathcal{M}}
\newcommand{\cS}{\mathcal{S}}
\newcommand{\1}{1\hspace{-0.098cm}\mathrm{l}} 
\newcommand{\<}[1]{\left< #1 \right>}
\newcommand{\la}{\lambda}
\newcommand{\eps}{\varepsilon}
\newcommand{\ra}{\rightarrow}
\newcommand{\ssup}[1] {{\scriptscriptstyle{({#1}})}}
\newcommand{\sse}[1] {{\scriptscriptstyle{[{#1}}]}} 
 \newcommand{\supp}{{\rm supp}}
\newcommand{\tem}{\mathrm{tem}}
\newcommand{\rap}{\mathrm{rap}}
\renewcommand{\rho}{\varrho}
\begin{document}

\begin{center}
{\Large \bf The infinite rate symbiotic branching model:}\\[1mm]
{\Large \bf from discrete to continuous space}\\[5mm]
\vspace{0.7cm}
\textsc{Matthias Hammer\footnote{Institut f\"ur Mathematik, Technische Universit\"at Berlin, Stra\ss e des 17. Juni 136, 10623 Berlin, Germany.} and
Marcel Ortgiese\footnote{
Department of Mathematical Sciences, University of Bath, Claverton Down, Bath, BA2 7AY,
United Kingdom. } 
} 
\\[0.8cm]
{\small \today} 
\end{center}

\vspace{0.3cm}

\begin{abstract}\noindent 
The symbiotic branching model describes a spatial population
consisting of two types
that are allowed to migrate in space and branch locally only 
if both types are present.
We continue our investigation of the large scale behaviour of the system started in \cite{BHO15}, 
where we showed that the continuum system converges after
diffusive rescaling. 
Inspired by a scaling property of the continuum model,
a series of earlier 
works initiated 
by Klenke and Mytnik~\cite{KM11b, KM11c} studied the model on a discrete space, but with infinite branching rate. 
In this paper, we bridge the gap between 
the two models by showing that by diffusively
rescaling this discrete space infinite rate model, we obtain the continuum model from \cite{BHO15}.
As an application of this convergence result, we show that if we start the infinite rate system from complementary Heaviside initial conditions, the initial ordering of types is preserved in the limit and 
that the interface between the types consists of a single point.
  \par\medskip

  \noindent\footnotesize
  \emph{2010 Mathematics Subject Classification}:
  Primary\, 60K35,  \ Secondary\, 60J80, 60H15. 
  \end{abstract}

\noindent{\slshape\bfseries Keywords.} Symbiotic branching model, mutually catalytic
branching, stepping stone model, rescaled interface, moment duality, Meyer-Zheng topology.

\section{Introduction}\label{intro}

\subsection{The symbiotic branching model and its interface}

In \cite{EF04} Etheridge and Fleischmann introduce a spatial population model that 
describes the evolution of two interacting types. 
On the level of a  particle approximation,  the dynamics follows locally a branching process,
where each type branches with a rate proportional to the frequency
of the other type. Additionally, types are allowed to migrate to neighbouring 
colonies. In the 
continuum space and large population limit, the rescaled numbers 
of the respective types  $u^\sse{\gamma}_t(x)$ and $v^\sse{\gamma}_t(x)$ are given by the 
nonnegative solutions of
the system of stochastic partial differential equations
\begin{equation}\bal
\label{eqn:spde}
 	{\mathrm{cSBM}(\varrho,\gamma)}_{u_0, v_0}: \quad
 		\begin{cases}
  			\frac{\partial }{\partial t}
u_t^\sse{\gamma}(x) & \!\!\!\!= \frac{\Delta}{2} u^\sse{\gamma}_t(x) + 
                     	\sqrt{ \gamma u^\sse{\gamma}_t(x) v^\sse{\gamma}_t(x)} \, \dot{W}^\ssup{1}_t(x),\\[0.3cm]
 			\frac{\partial }{\partial t}v_t^\sse{\gamma}(x) 
 & \!\!\!\!= \frac{\Delta}{2} v_t^\sse{\gamma}(x) + 
                     	\sqrt{ \gamma u^\sse{\gamma}_t(x) v^\sse{\gamma}_t(x)} \, \dot{W}^\ssup{2}_t(x),
 		\end{cases}
	\eal\end{equation}
with suitable nonnegative initial conditions $u_0(x) \ge 0, v_0(x) \ge 0, x \in \R$.
Here, $\gamma > 0$ is the branching rate and $ (\dot{W}^\ssup{1},
\dot{W}^\ssup{2})$ is a pair of correlated standard Gaussian white noises on $\mathbb{R}_+ \times \mathbb{R}$ with correlation 
governed by a parameter $\varrho
\in [-1,1]$. 
Existence (for $\rho \in [-1,1]$) and uniqueness (for $\rho \in [-1,1)$) was
proved in~\cite{EF04} for a large class of initial conditions.

The model generalizes several well-known examples of spatial population dynamics.
Indeed, for $\varrho=-1$ and $u_0 = 1- v_0$,
the system reduces to the continuous-space {\em stepping stone model} analysed in \cite{T95}, while
for $\varrho=0$, 
the system is known as the {\em mutually catalytic model} due to Dawson and Perkins \cite{DP98}. 
Finally, for $\rho = 1$ and the extra assumption
$u_0= v_0$, 
the model is an instance of the \emph{parabolic Anderson model}, see for example~\cite{MuellerSupport91}.

One of the central question is
how the local dynamics, where one type will eventually dominate over the
other, interacts with the migration to shape the global picture. 
A particularly interesting situation is when initially both types are spatially
separated and one would like to know how one type `invades' the other, in other words
we would like to understand the interface between the two types.
Mathematically, this corresponds to
`complementary Heaviside initial conditions', i.e.
	\[
 	u_0(x) = {\bf 1}_{\R^-}(x) \quad \mbox{ and } \quad v_0(x) = {\bf 1}_{\R^+}(x), \quad x \in \R.
	\]

\begin{defn}\label{def:ifc}
		The interface at time $t$ of a solution $(u^\sse{\gamma}_t,v^\sse{\gamma}_t)_{t\ge0}$ of the symbiotic branching model $\mathrm{cSBM}(\varrho,\gamma)_{u_0, v_0}$ with $\varrho \in [-1,1]$, $\gamma>0$ is defined as 
		\begin{eqnarray*}
			{\rm Ifc}_t = \mbox{cl} \big\{x\in\R : u^\sse{\gamma}_t(x) v^\sse{\gamma}_t(x) > 0 \big\},
		\end{eqnarray*}
		where $\mbox{cl}(A)$ denotes the closure of the set $A$ in $\R$.
\end{defn}

The first question that arises is whether this interface is non-trivial. Indeed, in ~\cite{EF04} it is shown that the interface is a compact set
and moreover that the width of the interface growths at most linearly in $t$.
This result is strengthened in~\cite[Thm.\ 2.11]{BDE11} for all $\rho$ close to $-1$
by showing that the width is at most of order $\sqrt{t\log(t)}$.

Especially the latter bound on the width seems to suggest diffusive behaviour for the interface.
This conjecture is supported by
the following \emph{scaling property}, see  \cite[Lemma~8]{EF04}:
If we rescale time and space diffusively, i.e.\
if given $K > 0$ we set
\[ (u^\ssup{K}_t(x), v^\ssup{K}_t(x)\big) := \big(u^\sse{\gamma}_{K^2t}(K x), v^\sse{\gamma}_{K^2t}(K x)\big) \quad \mbox{for } x \in \R, t \geq 0, \]
then 
this defines  a solution to $\mathrm{cSBM}(\rho, K \gamma)_{u^\ssup{K}_0, v^\ssup{K}_0}$ with 
correspondingly transformed initial  states $(u^\ssup{K}_0, v^\ssup{K}_0)$.

Provided that the initial conditions are invariant under diffusive rescaling, 
 a diffusive rescaling of the system is equivalent (in law) to a rescaling of just the branching rate.
Since the complementary Heaviside initial conditions are invariant, 
we will in the following always consider the limit $\gamma \ra \infty$. 
This scaling then includes the diffusive rescaling, while also giving us the flexibility 
to consider more general initial conditions.

For the continuous space model this programme has been carried out in~\cite{BHO15}.
We define the  measure-valued processes
\begin{equation}\label{defn:mu_nu} \mu_t^\sse{\gamma} (dx)  := u^\sse{\gamma}_t(x)\, dx ,\quad  \quad \nu_t^\sse{\gamma} (dx)
:= v^\sse{\gamma}_t(x) \, dx
\end{equation}
obtained by taking the  solutions of $\mathrm{cSBM}(\rho,\gamma)_{u_0,v_0}$ as densities, where the initial conditions 
remain fixed. 
The following result was proved in \cite[Thm.\ 1.10]{BHO15}. 
Here and in the following, if $\cS=\R$ or $\cS=\Z^d$ with $d\in\N$ we denote by $\cM_\tem(\cS)$ the space of tempered measures on $\cS$, and by
$\cM_\rap(\cS)$ the space of rapidly decreasing measures. Similarly, 
$\cB^+_\tem(\cS)$ (resp.\ $\cB^+_\rap(\cS)$) denotes the space of nonnegative, tempered (resp.\ rapidly decreasing)
measurable functions on $\cS$. 
We collect all the relevant formal definitions in Appendix~\ref{appendix0}.

\begin{thm}[\cite{BHO15}]\label{thm:old2_new}
Let  $\rho\in(-1,0)$.
Suppose the initial conditions satisfy
$(u_0,v_0)\in\calB_\tem^+(\R)^2$ 
resp.\ $(u_0,v_0)\in\calB_\rap^+(\R)^2$,
and for each $\gamma >0$ we let $(u_t^\sse{\gamma},v_t^\sse{\gamma})_{t \geq 0}$ be the solution to $\mathrm{cSBM}(\rho,\gamma)_{u_0,v_0}$.
Then as $\gamma \ra \infty$, the measure-valued process $(\mu_t^\sse{\gamma},\nu_t^\sse{\gamma})_{t\ge0}$ defined by~\eqref{defn:mu_nu}
converges in law in $D_{[0,\infty)}(\calM_\tem(\R)^2)$ resp.\ in $D_{[0,\infty)}(\calM_\rap(\R)^2)$ equipped with the Meyer-Zheng `pseudo-path' topology to 
a measure-valued process $(\mu_t,\nu_t)_{t \geq 0}$ satisfying the following separation-of-types condition:
For any $x \in \R$, $t \in (0,\infty)$ we have
\begin{equation}\label{eq:sep_types} 
\E_{\mu_0, \nu_0} [ S_\eps \mu_t(x) S_\eps \nu_t(x)]  \ra 0 , \quad \mbox{as } \eps \rightarrow 0,
\end{equation}
where $(S_t)_{t \geq 0}$ denotes the heat semigroup.
\end{thm}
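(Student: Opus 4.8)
I would prove the theorem in three steps: (i) tightness of the family $(\mu^\sse{\gamma},\nu^\sse{\gamma})_{\gamma>0}$ in $D_{[0,\infty)}(\cM_\tem(\R)^2)$, resp.\ $D_{[0,\infty)}(\cM_\rap(\R)^2)$, with the Meyer--Zheng pseudo-path topology; (ii) identification of the limit, using the self-duality of $\mathrm{cSBM}(\rho,\gamma)$, by showing that all mixed moments converge as $\gamma\ra\infty$ and pin down the law of every limit point; (iii) deducing the separation-of-types property \eqref{eq:sep_types} from the limiting second-moment formula. The Meyer--Zheng topology is essential here because the limit $(\mu,\nu)$ typically has very rough paths, so the usual Skorokhod topology is too strong. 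The standing hypothesis $\rho\in(-1,0)$ enters through the \emph{sign} of the correlation: for $\rho<0$ the interaction of the two types acts as a killing rather than a growth term, which is exactly what makes steps (i)--(iii) go through with bounds uniform in $\gamma$.

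\textbf{Tightness.} Fix a test function $\phi$ (say $\phi\in C_c^\infty(\R)$, or a suitable weighted Schwartz function in the tempered/rapid setting). Then $t\mapsto\langle\mu_t^\sse{\gamma},\phi\rangle$ is a semimartingale with drift $\int_0^t\langle\mu_s^\sse{\gamma},\tfrac12\Delta\phi\rangle\,ds$ and a continuous martingale part whose bracket carries the factor $\gamma$. By the tightness criterion of Meyer and Zheng it suffices to bound, uniformly in $\gamma$, the quantities $\sup_{t\le T}\E[\,|\langle\mu_t^\sse{\gamma},\phi\rangle|\,]$ and the conditional variation $\sup\sum_i\E\big[\,\big|\E[\langle\mu_{t_{i+1}}^\sse{\gamma},\phi\rangle-\langle\mu_{t_i}^\sse{\gamma},\phi\rangle\mid\cF_{t_i}]\big|\,\big]$ over finite partitions of $[0,T]$. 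The martingale part drops out of the inner conditional expectation, so the conditional variation is at most $\E\int_0^T|\langle\mu_s^\sse{\gamma},\tfrac12\Delta\phi\rangle|\,ds$; since the first moment of the symbiotic branching model is simply the heat flow, $\E[u_s^\sse{\gamma}(x)]=S_su_0(x)$ and likewise for $v$, both this and the $L^1$-bound are independent of $\gamma$. Running this over a countable determining family of test functions, together with the uniform compact-containment estimate that again follows from the first-moment identity, gives tightness of $(\mu^\sse{\gamma},\nu^\sse{\gamma})_{\gamma}$ in the required pseudo-path sense.

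\textbf{Moments and identification of the limit.} The symbiotic branching model is self-dual, which expresses its mixed moments through a finite system of correlated Brownian motions (the dual particles) carrying a Feynman--Kac weight in which $\gamma$ multiplies collision local times; see \cite{EF04}, and \cite{DP98} for $\rho=0$. The basic instance is the mixed second moment: writing $m_t^\sse{\gamma}(y,z)=\E[u_t^\sse{\gamma}(y)v_t^\sse{\gamma}(z)]$ and reading off the covariance structure of the SPDE, $m^\sse{\gamma}_t$ solves the heat equation with a source term $\rho\gamma\,\delta(y-z)\,m^\sse{\gamma}_t(y,y)$ localized on the diagonal, so by Feynman--Kac
\[
 m_t^\sse{\gamma}(y,z)=\E_{y,z}\!\left[u_0(B_t^1)\,v_0(B_t^2)\,\exp\!\big(\rho\gamma L_t\big)\right],
\]
where $B^1,B^2$ are independent Brownian motions started at $y,z$ and $L_t$ is the collision local time of $B^1-B^2$ at $0$. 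Since $\rho<0$ the weight lies in $(0,1]$, so $m_t^\sse{\gamma}(y,z)\le (S_tu_0)(y)\,(S_tv_0)(z)$ uniformly in $\gamma$, and dominated convergence gives $m_t^\sse{\gamma}(y,z)\ra m_t(y,z):=\E_{y,z}[u_0(B_t^1)v_0(B_t^2)\,\mathbf 1_{\{L_t=0\}}]$ as $\gamma\ra\infty$, where $\{L_t=0\}$ is exactly the event that $B^1$ and $B^2$ do not collide before time $t$. The same mechanism applies to the higher mixed moments, yielding $\gamma$-uniform bounds and convergence of every mixed moment; since the limiting moment sequence does not grow too fast, the associated moment problem is determinate, and combined with the tightness of step (i) and the Meyer--Zheng convergence criterion (tightness plus convergence of finite-dimensional distributions at almost every time) this forces $(\mu^\sse{\gamma},\nu^\sse{\gamma})$ to converge in law to the unique process $(\mu,\nu)$ with those moments. (Equivalently, one may check that every limit point solves the martingale problem $\MP$ and invoke its well-posedness.)

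\textbf{Separation of types, and the main obstacle.} With $p_\eps$ the heat kernel, $S_\eps\mu_t(x)=\int_\R p_\eps(x-y)\,\mu_t(dy)$, so by Fubini
\[
 \E_{\mu_0,\nu_0}\big[S_\eps\mu_t(x)\,S_\eps\nu_t(x)\big]=\int_\R\int_\R p_\eps(x-y)\,p_\eps(x-z)\,m_t(y,z)\,dy\,dz ,
\]
with $m_t$ the limiting mixed second moment from step (ii). The crucial point is that its diagonal vanishes: $m_t(y,y)=0$ for every $t>0$, because for $y=z$ the difference $B^1-B^2$ starts at $0$ and hence accumulates strictly positive local time on every interval $(0,t]$, so $\mathbf 1_{\{L_t=0\}}=0$ almost surely. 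Since $m_t$ is bounded (by the product of heat flows) and continuous, while $p_\eps(x-\cdot)\otimes p_\eps(x-\cdot)$ concentrates on the diagonal point $(x,x)$ as $\eps\ra 0$, dominated convergence yields $\E_{\mu_0,\nu_0}[S_\eps\mu_t(x)\,S_\eps\nu_t(x)]\ra m_t(x,x)=0$, which is \eqref{eq:sep_types}. I expect the genuine difficulty to lie in step (ii): making rigorous, inside the \emph{full} mixed-moment formulas (whose dual evolution interacts through $\gamma$-scaled local times), the passage to the limit of the Feynman--Kac weights together with the determinacy of the resulting moment sequence — this is precisely where the hypothesis $\rho\in(-1,0)$ is used in an essential way, and where the most delicate estimates are required.
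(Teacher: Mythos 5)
Your three-step skeleton (tightness, identification, separation of types) is the same as the paper's (see Propositions \ref{tightness_MZ}, \ref{MPinf 2}, Lemma \ref{lem:moments 2}, Corollary \ref{cor:sep discrete} and Proposition \ref{prop:uniqueness}), and your tightness and separation-of-types arguments are essentially correct: the conditional-variation bound via the drift plus a first-moment estimate is indeed how the Meyer--Zheng criterion is checked, and the limiting mixed-second-moment kernel vanishing on the diagonal is exactly the mechanism behind~\eqref{eq:sep_types}. Two smaller caveats here. First, for the compact containment part you need control of $\sup_{t\le T}\langle \mu^\sse{\gamma}_t,\phi_\lambda\rangle$, not just fixed-$t$ first moments; this can be rescued via Doob's $L^1$ maximal inequality applied to the nonnegative martingale $t\mapsto\langle u^\sse{\gamma}_t,S_{T-t}\phi_\lambda\rangle$ together with the lower semigroup bound~\eqref{estimate 1a}, but you should say so explicitly (the paper instead uses the second-moment bounds~\eqref{uniform first moments dual process 1}). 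Second, passing the Feynman--Kac limit $m^\sse{\gamma}_t\to m_t$ \emph{inside} the expectation $\E[\langle\mu_t,\phi\rangle\langle\nu_t,\psi\rangle]$ of a Meyer--Zheng limit point requires uniform integrability of the products $\langle\mu^\sse{\gamma}_t,\phi\rangle\langle\nu^\sse{\gamma}_t,\psi\rangle$; the paper side-steps this by proving only an upper bound via Fatou (Lemma~\ref{lem:moments 2}), which suffices for~\eqref{eq:sep_types}, and only later upgrades to equality using $(2+\eps)$-moments (Proposition~\ref{prop:moments}).

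The genuine gap is in step (ii), where you identify the limit by ``convergence of every mixed moment'' and ``determinacy of the moment problem.'' This cannot work for $\rho\in(-1,0)$ in general. The coloured-particle moment duality of~\cite{EF04} yields Feynman--Kac weights of the form $\exp\bigl(\gamma L^{=}_t+\rho\gamma L^{\neq}_t\bigr)$, where $L^{=}$ collects collision local times between dual particles of the \emph{same} colour; this same-colour contribution has a \emph{positive} exponent in $\gamma$ and blows up as $\gamma\to\infty$. Concretely, $p$-th moments of $\mathrm{SBM}(\rho,\infty)$ are finite only for $p$ with $\rho+\cos(\pi/p)<0$ (Proposition~\ref{prop:moments}b)), so for $\rho$ near $0$ you have nothing beyond roughly second moments — not a moment sequence, let alone a determinate one. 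Even the second moment $\E[\langle\mu_t,\phi\rangle^2]$ carries a $1/|\rho|$ prefactor in~\eqref{second moment 2} that diverges as $\rho\uparrow 0$. The identification therefore has to go through the route you relegate to a parenthesis: show that any Meyer--Zheng limit point solves the martingale problem $(\mathbf{MP}_F(\R))^\rho$ (Proposition~\ref{MPinf 2}) \emph{and} satisfies the separation-of-types property, and then invoke uniqueness, which in the paper is established via the self-duality \eqref{eq:self-duality} with the function $F$ from~\eqref{self-duality function}, not via moments (Proposition~\ref{prop:uniqueness}). The separation-of-types condition is essential for uniqueness here — the martingale problem alone is also solved by every finite-$\gamma$ process — so it is part of the identification, not merely a consequence read off afterwards.
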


\begin{rem}
\begin{itemize}
 \item[(a)] We call the limit $(\mu_t,\nu_t)_{t \geq 0}$ the \emph{continuous-space
infinite rate symbiotic branching model} $\mathrm{cSBM}(\rho,\infty)_{u_0,v_0}$.
 \item[(b)] We recall the definition of the Meyer-Zheng `pseudo-path' topology in Appendix~\ref{sec:appendix_path}.
 This topology is strictly weaker than the standard Skorokhod topology on $D_{[0,\infty)}$. 
 Under the more restrictive condition that $(u_0,v_0) = (\1_{\R^-},\1_{\R^+})$ and $\rho \in (-1, -\frac{1}{\sqrt{2}})$, 
 we can also show tightness in the stronger Skorokhod topology, so that then in particular $(\mu^\sse{\gamma},\nu^\sse{\gamma})$
 converges in $\calC_{[0,\infty)}( \calM_\tem(\R)^2)$ as $\gamma \ra \infty$, cf.\ Theorem 1.5 in~\cite{BHO15}. Also, we show that 
 in this case, the limiting measures $\mu_t,\nu_t$ are absolutely continuous with respect to Lebesgue measure
 and if we denote the densities also by $\mu_t$ and $\nu_t$, we can derive the more intuitive separation-of-types condition:
 \begin{equation}\label{singularity1}
\mu_t(\cdot) \nu_t(\cdot)=0\qquad \p\otimes\ell\text{-a.s.}
\end{equation}
\end{itemize}
\end{rem}

For $\rho = -1$ and complementary Heaviside initial conditions,
the analogue of Theorem~\ref{thm:old2_new} was already proved in Tribe~\cite{T95} for the continuum stepping stone model, 
as one of the steps of understanding the diffusively rescaled interface. 
Under these assumptions it was shown that the process $(\mu_t^\sse{\gamma}, \nu_t^\sse{\gamma})_{t \geq 0}$
converges weakly for $\gamma \ra \infty$ to 
\begin{equation}\label{eq:tribe} (\1_{\{x\le B_t\}}\, dx,\1_{\{x\ge B_t\}}\, dx)_{t \geq 0} , \end{equation}
for $(B_t)_{t \geq 0}$ a standard Brownian motion. 
Unfortunately, our previous work does not give such a truly explicit characterization of the infinite rate system
for $\rho > -1$. However, we do have a characterization in terms of a martingale problem (which we will recall below).
This allows us to  show that the limit is not of the form~\eqref{eq:tribe}, see Remark~1.14 in~\cite{BHO15},
 even if we allow the position to be
a general diffusion rather than a Brownian motion.
 In fact, even the case $\rho = -1$ with general initial conditions is not covered
by~\cite{T95}. 
However, this case is taken up in the work \cite{HOV15}, where we show in particular that for complementary initial conditions which do not necessarily sum up to one, the interface of the infinite rate limit moves like a Brownian motion with drift.

For $\rho>-1$, in order to take a first step towards a more explicit characterization of the limit in Theorem~\ref{thm:old2_new}, 
our aim in this paper is to make the connection to related results on the discrete lattice $\Z$. 
We first recall that for any $d\in\N$, the \emph{discrete-space} finite rate symbiotic branching model on $\Z^d$
is given by the nonnegative solutions $((u^\sse{\gamma}_t(x), v^\sse{\gamma}_t(x)), x \in \Z^d, t \geq 0)$ of 
\begin{equation}\bal
\label{eqn:sde_discrete}
 	{\mathrm{dSBM}(\varrho,\gamma)}_{u_0, v_0}: \quad
 		\begin{cases}
du^\sse{\gamma}_t(x) & \!\!\!\!= \frac{\Delta}{2^d} u^\sse{\gamma}_t(x) \, dt + 
                     	\sqrt{ \gamma u^\sse{\gamma}_t(x) v^\sse{\gamma}_t(x)} \, {dW}^\ssup{1}_t(x),\\[0.3cm]
dv^\sse{\gamma}_t(x) 
 & \!\!\!\!= \frac{\Delta}{2^d}  v^\sse{\gamma}_t(x) \, dt+ 
                     	\sqrt{ \gamma u^\sse{\gamma}_t(x) v^\sse{\gamma}_t(x)} \, {dW}^\ssup{2}_t(x),
 		\end{cases}
	\eal\end{equation}
with suitable nonnegative initial conditions $u_0(x) \ge 0, v_0(x) \ge 0, x \in \Z^d$.
Here, $\gamma > 0$ is the branching rate, $\Delta$ is the discrete Laplace operator, defined for any $f : \Z^d\ra\R$ as
\begin{equation}\label{def:discreteLaplace} \Delta f(x) := \sum_{y:|y-x|=1}(f(y)-f(x)), 
\quad x\in \Z^d, \end{equation}
and the pair $(W^1(x), W^2(x))$ is a
$\varrho$-correlated two-dimensional Brownian motion
which is independent for each $x \in \Z^d$.

Prior to our work, but also inspired by the scaling property
for the continuous model, 
Klenke and Mytnik consider this discrete space model, where the branching rate is sent to infinity.
Indeed,
in a series of papers~\cite{KM10a, KM11b, KM11c} show that 
a non-trivial limiting process exists for $\gamma \to \infty$ (on the lattice) and  study its
long-term properties. Moreover, Klenke and Oeler \cite{KO10} give a Trotter type approximation.
Their results concentrate on the case $\rho = 0$, i.e.\ the mutually catalytic model, 
however analogous results have been derived by D\"oring and Mytnik for the case
$\varrho \in(-1, 1)$ in~\cite{DM11a, DM12}.
In analogy with \eqref{singularity1}, the limiting process satisfies the separation-of-types property, i.e.\ at each site only one type is present almost surely.
We will refer to the limit as
the \emph{discrete-space infinite rate symbiotic branching model}, abbreviated as $\mathrm{dSBM}(\rho,\infty)_{u_0,v_0}$. 

What makes the results on the lattice especially interesting for our purpose of identifying 
the continuous infinite rate model is the fact that there is a
very explicit description of the limit $\mathrm{dSBM}(\rho,\infty)$
in terms of an infinite system of jump-type stochastic differential equations (SDEs). 

As noted in~\cite{EF04}, 
the continuous finite rate symbiotic branching model $\mathrm{cSBM}(\rho,\gamma)$ can be obtained as a diffusive time/space
rescaling of the discrete model $\mathrm{dSBM}(\rho,\gamma)$. 
Therefore, 
it seems natural to expect that by rescaling the discrete system with infinite branching
rate
diffusively we obtain the infinite rate continuous space system of Theorem~\ref{thm:old2_new}.
In other words, we expect that the following diagram (Figure~\ref{fig:commute}) commutes.
\begin{figure}[htbp]
\centering
\includegraphics[width=0.75\textwidth]{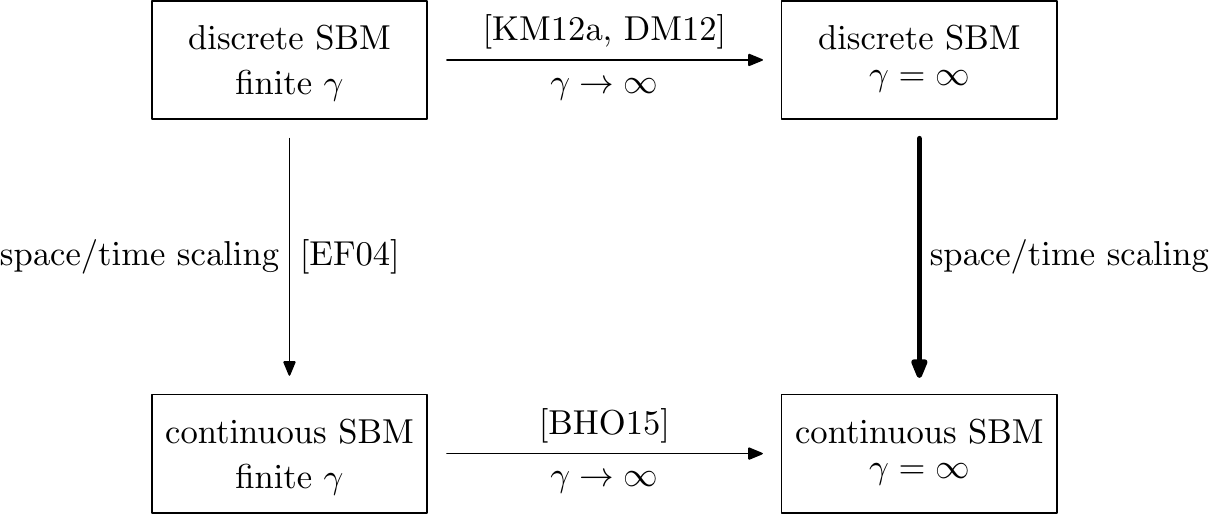}%
\caption{A commuting diagram.}\label{fig:commute}
\end{figure}

Indeed, this will be our first main result in this paper. 
In future work, we will attempt
to exploit this commutativity to give a more explicit description of
the limiting object in Theorem~\ref{thm:old2_new} by
rescaling the jump-type SDEs of~\cite{KM11b}.
As the second main result in this paper, we can deduce from the scaling limit
that the continuous model preserves the initial ordering of types in the limit and
also that the interface consists of a single point.

\section{Main results} 

In order to state our main result, we first recall the martingale
problem that characterizes the limit in Theorem~\ref{thm:old2_new}.
This martingale problem is very much related to the martingale problem
for the discrete space model $\mathrm{dSBM}(\rho,\infty)$ of Klenke and Mytnik~\cite{KM11b}.

Throughout, we use the notation defined in Appendix~\ref{appendix0}. 
We can formulate the martingale problem in both discrete and continuous space simultaneously.
Therefore, let $\cS$ be either $\Z^d$ or $\R$.
We recall the self-duality function employed in \cite{EF04}:
Let $\rho\in(-1,1)$ and if either $(\mu,\nu,\phi,\psi)\in\calM_\tem(\cS)^2\times\calB_\rap(\cS)^2$ or $(\mu,\nu,\phi,\psi)\in\calM_\rap(\cS)^2\times\calB_\tem(\cS)^2$, 
define
\begin{equation}\label{self-duality product}
\bal \langle\langle \mu , \nu, \phi, \psi \rangle \rangle_\rho &:= -\sqrt{1-\rho}\,\langle \mu + \nu, \phi + \psi\rangle_\cS
+ i \sqrt{1+\rho}\, \langle \mu - \nu, \phi - \psi \rangle_\cS ,
\eal\end{equation}
where $\langle \mu , \phi \rangle_\cS$ denotes the integral $\int_\cS \phi(x)\, \mu(dx)$, for $\mu$ a measure and $\phi$ a measurable function. 
Then, we define the \emph{self-duality function $F$} as
\begin{equation}\label{self-duality function}
 F(\mu,\nu,\phi,\psi) := \exp \langle \langle \mu, \nu, \phi, \psi \rangle \rangle_\rho. 
\end{equation}
With this notation,  we define a martingale problem, which in the continuous setting was
called $\mathbf{MP}'$ in~\cite{BHO15}.

\begin{defn}[Martingale Problem $(\mathbf{MP}_F(\cS))_{\mu_0,\nu_0}^\rho$]\label{defn:MP'}
Fix $\rho\in(-1,1)$ and (possibly random) initial conditions $(\mu_0,\nu_0)\in\calM_\tem(\cS)^2$ (resp.\ $\calM_\rap(\cS)^2$).
A c\`adl\`ag $\calM_\tem(\cS)^2$-valued (resp.\ $\calM_\rap(\cS)^2$-valued) stochastic process $(\mu_t, \nu_t)_{t\ge 0}$
is called a solution to the martingale problem $(\mathbf{MP}_F(\cS))_{\mu_0,\nu_0}^\rho$
if the following holds: There exists an increasing c\`adl\`ag $\calM_\tem(\cS)$-valued (resp.\ $\calM_\rap(\cS)$-valued) process $(\Lambda_t)_{t\ge0}$ with $\Lambda_0=0$ and
\begin{equation}\label{finiteness Lambda 1}
\E_{\mu_0,\nu_0}\big[\Lambda_t(dx)\big]\in\calM_\tem(\cS)\quad(\text{resp.\ } \E_{\mu_0,\nu_0}\big[\Lambda_t(dx)\big]\in\calM_\rap(\cS))
\end{equation}
for all $t>0$, 
such that for all test functions
 $\phi,\psi\in\big(\calC_{\rap}^{(2)}(\cS)\big)^+$ (resp.\ $\phi,\psi\in\big(\calC^{(2)}_{\tem}(\cS)\big)^+$) 
the process
\begin{align}\label{MP9}\bal
 F(\mu_t, \nu_t,\phi,\psi) &- F(\mu_0,\nu_0,\phi,\psi) \\ &- \frac{1}{2}\int_0^t F(\mu_s, \nu_s,\phi,\psi)\,\langle\langle \mu_s, \nu_s, \Delta\phi, \Delta\psi\rangle\rangle_\rho \, ds \\
& - 4(1-\rho^2)\int_{[0,t]\times\cS} F(\mu_s,\nu_s,\phi,\psi)\,\phi(x)\psi(x)\,\Lambda(ds,dx)
\eal\end{align}
is a martingale, where
$\Delta$ denotes the continuum Laplace operator if $\cS=\R$ and the discrete Laplace operator if $\cS=\Z^d$. 
\end{defn}

In \eqref{MP9} we have interpreted the right-continuous and increasing
process $t\mapsto\Lambda_t(dx)$ as a (locally finite) measure $\Lambda(ds,dx)$ on $\R^+\times\cS$, via
\[\Lambda([0,t]\times B):=\Lambda_t(B).\]

In order to characterize $\mathrm{cSBM}(\rho,\infty)$, 
it does not suffice to require that the martingale problem 
$(\mathbf{MP}_F(\R))_{u_0,v_0}^\rho$ is satisfied, since it
holds for $\mathrm{cSBM}(\rho,\gamma)$ for arbitrary $\gamma<\infty$, see e.g.\ Proposition~A.5 in~\cite{BHO15}.
However, we do get uniqueness if we require additionally that 
the separation-of-types condition \eqref{eq:sep_types} is satisfied, as we recall from~\cite[Thm.\ 1.10]{BHO15}
(where the martingale problem $\mathbf{MP}_F(\R)$ was denoted by $\mathbf{MP}'$).

We note that in the discrete context, our martingale problem $(\mathbf{MP}_F(\Z^d))^\rho$ is not exactly the same
as the martingale problem in~\cite[Thm.~1.1]{KM11b}. Indeed, the main 
difference is the appearance of the measure $\Lambda$,
which, in some sense that can be made precise, characterizes the correlations. 
The reason why we need this extra term in the continuous case
can be understood if we recall that the martingale problem $\mathbf{MP}_F$
is tailored to an application of a self-duality (introduced in this context by Mytnik~\cite{Mytnik98}), 
which characterizes the finite-dimensional distributions.
In the discrete context it suffices to consider test functions $\phi, \psi$ 
that satisfy $\phi(x)\psi(x) = 0$ for all $x \in \Z^d$, see Corollary 2.4 in~\cite{KM10a}.
However, the same arguments do not carry over to the continuous space, where 
we need arbitrary test functions $\phi,\psi\in\big(\calC_{\rap}^{(2)}(\R)\big)^+$ (resp.\ $\phi,\psi\in\big(\calC^{(2)}_{\tem}(\R)\big)^+$).

But we note that obviously any solution of our martingale problem $\mathbf{MP}_F(\Z^d)$
(together with separation-of-types) 
satisfies the martingale problem of Theorem 1.1 in~\cite{KM11b} (respectively Theorem~4.4 in~\cite{DM12}
for general $\rho$).
So as a first preliminary result, we show that the converse is also true
and that there is a unique solution to the discrete analogue of the martingale problem in~\cite{BHO15}. 
Moreover, we allow for more general initial conditions. 
We combine the existence and uniqueness result for both the discrete and the continuous case in the following theorem, 
where for a measure $\nu$ on $\Z^d$ we write $\nu(k)$ instead of $\nu(\{k \})$.

\begin{thm}\label{thm:MPinf_discrete}
Assume that $\rho\in(-1,0)$. Consider $\cS\in\{\Z^d,\R\}$.
\begin{enumerate}
\item[a)] 
For all initial conditions $(\mu_0,\nu_0)\in\calM_\tem(\cS)^2$ (resp.\ $(u_0,v_0)\in\calM_\rap(\cS)^2$), there exists a unique solution $(\mu_t,\nu_t)_{t\ge0}$ to the martingale problem $(\mathbf{MP}_F(\cS))_{\mu_0,\nu_0}^\rho$
satisfying the separation-of-types property in the sense that 
\begin{itemize}
\item if $\cS=\Z^d$, then for all $t>0$ and $k\in\Z^d$ we have
\begin{equation}\label{eq:dsep}
\mu_t(k)\nu_t(k)=0\qquad \p_{\mu_0,\nu_0}\text{-a.s.};
\end{equation}
\item if $\cS=\R$, then for all $t>0$ and $x\in\R$ we have
\begin{equation}\label{eq:csep} 
S_{t+\eps}\mu_0(x) S_{t+\eps}\nu_0(x)  \geq \E_{\mu_0, \nu_0} [ S_\eps \mu_t(x) S_\eps \nu_t(x)]  \xrightarrow{\eps \to 0} 0 .
\end{equation}
\end{itemize}
Moreover, the solution is a strong Markov process.
\item[b)] Let $(u_0,v_0)\in\calB^+_\tem(\cS)^2$ (resp.\ $(u_0,v_0)\in\calB^+_\rap(\cS)^2$). For each $\gamma>0$ denote by $(u_t^\sse{\gamma},v_t^\sse{\gamma})_{t\ge0}$ the solution to $\mathrm{SBM}(\rho,\gamma)_{u_0,v_0}$, 
considered as measure-valued processes. Then, as $\gamma\uparrow\infty$, the processes $(u^\sse{\gamma}_t,v^\sse{\gamma}_t)_{t\ge0}$ converge in law in $D_{[0,\infty)}(\calM_\tem(\cS)^2)$ 
(resp.\ in $D_{[0,\infty)}(\calM_\rap(\cS)^2)$) 
equipped with the Meyer-Zheng ``pseudo-path" topology
to the unique solution of the martingale problem $(\mathbf{MP}_F(\cS))_{u_0,v_0}^\rho$
satisfying the separation-of-types condition.
\end{enumerate}
\end{thm}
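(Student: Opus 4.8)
For $\cS=\R$ and rapidly decreasing initial conditions, part~a) is precisely Theorem~\ref{thm:old2_new} (=\cite[Thm.~1.10]{BHO15}), and the continuous-space part~b) is its convergence statement; the extension to general tempered $(\mu_0,\nu_0)$ would follow by approximating with rapidly decreasing measures and controlling the error through the first-moment identity $\E[\mu_t^\sse\gamma(\cdot)]=S_t\mu_0$, uniformly in $\gamma$. So the genuinely new work concerns $\cS=\Z^d$, and the plan is to prove existence together with part~b) by a tightness/limit-point argument, and to prove uniqueness by self-duality; there is no circularity, since uniqueness only uses the existence of solutions for varying (deterministic) initial data, which the limit-point argument supplies.

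\textbf{Tightness and the limiting martingale problem.} For finite $\gamma$, It\^o's formula applied to $F(\mu^\sse\gamma_t,\nu^\sse\gamma_t,\phi,\psi)$ — the lattice transcription of \cite[Prop.~A.5]{BHO15}, using the $\rho$-correlation of the two noises and the self-duality structure of $\langle\langle\cdot\rangle\rangle_\rho$ — shows that $(\mu^\sse\gamma_t,\nu^\sse\gamma_t,\Lambda^\sse\gamma_t)$ solves $(\mathbf{MP}_F(\Z^d))^\rho_{\mu_0,\nu_0}$ with the \emph{explicit} additive functional $\Lambda^\sse\gamma_t(\{k\}):=\gamma\int_0^t u^\sse\gamma_s(k)v^\sse\gamma_s(k)\,ds$. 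The hypothesis $\rho<0$ enters exactly here: computing $d\big(u^\sse\gamma_t(k)v^\sse\gamma_t(k)\big)$, the bracket of the two driving noises contributes $\rho\gamma u^\sse\gamma_t(k)v^\sse\gamma_t(k)\,dt$, and since $\E[u^\sse\gamma_t(k)v^\sse\gamma_t(k)]\ge0$ this gives, after a standard moment bootstrap, a bound on $\gamma\int_0^t\E[u^\sse\gamma_s(k)v^\sse\gamma_s(k)]\,ds=\E[\Lambda^\sse\gamma_t(\{k\})]$ that is uniform in $\gamma$, and likewise $\E[u^\sse\gamma_t(k)v^\sse\gamma_t(k)]\to0$. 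Together with the first-moment heat-equation estimates for $\mu^\sse\gamma,\nu^\sse\gamma$, this yields tightness of $(\mu^\sse\gamma,\nu^\sse\gamma,\Lambda^\sse\gamma)$ in $D_{[0,\infty)}$ with the Meyer--Zheng topology via the usual conditional-variation criterion, exactly as in \cite{BHO15}. Any subsequential limit $(\mu,\nu,\Lambda)$ solves $(\mathbf{MP}_F(\Z^d))^\rho_{\mu_0,\nu_0}$ — the Laplacian drift and the $\Lambda$-term pass to the limit because $F$ and its first two arguments are well-behaved and $\Lambda^\sse\gamma\weakconv\Lambda$ with uniformly bounded first moments, which also gives \eqref{finiteness Lambda 1} — and it satisfies \eqref{eq:dsep}, read off from $\E[u^\sse\gamma_t(k)v^\sse\gamma_t(k)]\to0$ after first establishing the limiting relation at almost every time and upgrading via c\`adl\`ag-ness.

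\textbf{Uniqueness by self-duality.} Given two solutions $(\mu,\nu,\Lambda),(\tilde\mu,\tilde\nu,\tilde\Lambda)$ with the same initial condition, and introducing (by the existence just proved) an independent family of solutions with varying initial data, I would run the two processes independently and exploit the martingale property of each in turn to show that $t\mapsto\E[F(\mu_t,\nu_t,\tilde\mu_{T-t},\tilde\nu_{T-t})]$ is constant on $[0,T]$, identifying $\cM_\rap(\Z^d)$ with $\cB_\rap(\Z^d)$ so that $\tilde\mu_{T-t}$ may serve as a test function. The Laplacian contributions from the two sides cancel by self-adjointness of $\Delta$ and the symmetry of $\langle\langle\cdot\rangle\rangle_\rho$ under swapping the two pairs of arguments; crucially, the $\Lambda$-contribution from the $(\mu,\nu)$-side is a multiple of $\int F(\mu_s,\nu_s,\tilde\mu_{T-s},\tilde\nu_{T-s})\,\tilde\mu_{T-s}(k)\tilde\nu_{T-s}(k)\,\Lambda(ds,\{k\})$, which \emph{vanishes} because $(\tilde\mu,\tilde\nu)$ satisfies separation-of-types, so that $\tilde\mu_{T-s}(k)\tilde\nu_{T-s}(k)=0$ $\Lambda(ds,\{k\})$-almost everywhere (a Fubini argument using \eqref{eq:dsep} at fixed times and the first-moment bound on $\Lambda$), and symmetrically for the $\tilde\Lambda$-term. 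Hence $\E[F(\mu_T,\nu_T,\tilde\mu_0,\tilde\nu_0)]=\E[F(\mu_0,\nu_0,\tilde\mu_T,\tilde\nu_T)]$; the right-hand side does not involve $(\mu,\nu)$ at all, while varying $(\tilde\mu_0,\tilde\nu_0)$ over a separating class of rapidly decreasing measures the functionals $F(\cdot,\cdot,\tilde\mu_0,\tilde\nu_0)$ determine the law of $(\mu_T,\nu_T)$, and then the Markov structure of the martingale problem pins down the full law of $(\mu,\nu)$. Finally $\Lambda$ is recovered as a path-functional from the semimartingale decomposition of $F(\mu_t,\nu_t,\1_{\{k\}},\1_{\{k\}})$ — the integrand $F(\mu_s,\nu_s,\1_{\{k\}},\1_{\{k\}})$ being continuous with modulus bounded below on compacts, since $s\mapsto\mu_s(k)+\nu_s(k)$ is c\`adl\`ag — so uniqueness of the triple follows. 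Part~b) for $\cS=\Z^d$ is then immediate (all subsequential limits coincide), and the strong Markov property is standard once the martingale problem is well-posed.

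\textbf{Main obstacle.} The conceptual heart is the $\Lambda$-cancellation in the self-duality, but the chief technical burden is the Meyer--Zheng bookkeeping: proving tightness of $\Lambda^\sse\gamma$ with controlled first moments, and above all transferring separation-of-types through a topology in which point-evaluation functionals are not continuous in time, which forces the detour through almost-every-time statements, the c\`adl\`ag upgrade, and the verification that the limiting $\Lambda$ does not charge exceptional times.
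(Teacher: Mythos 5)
Your proposal follows essentially the same route as the paper: tightness of $(u^\sse\gamma,v^\sse\gamma,L^\sse\gamma)$ in the Meyer--Zheng topology via uniform (second-)moment bounds that exploit $\rho<0$, passage to the limit in the $F$-martingale problem, separation of types from the mixed second-moment bound (with the a.e.-time/right-continuity upgrade), and uniqueness via the $F$-self-duality, whose key point is exactly the vanishing of the $\Lambda$-contribution once the test-measure side is separated. The only cosmetic difference is that you obtain the crucial uniform bound on $\gamma\int_0^t\E[u^\sse\gamma_s v^\sse\gamma_s]\,ds$ by a direct It\^o/Volterra computation rather than by citing the colored-particle moment duality (the paper's Lemma~\ref{lemma boundedness quadratic variation}); these are two routes to the same estimate.
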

We call the unique solution to the martingale problem $(\mathbf{MP}_F(\cS))^\rho$ satisfying \eqref{eq:dsep} resp.~\eqref{eq:csep}
the \emph{infinite rate symbiotic branching process} and denote it by $\mathrm{dSBM}(\rho,\infty)$ if $\cS=\Z^d$ and by $\mathrm{cSBM}(\rho,\infty)$ if $\cS=\R$. 

\begin{rem}
As noted above, for the discrete case our martingale problem is more restrictive than the version of~\cite{KM11b,DM12}, since we 
require the martingale property to hold for a larger class of test functions. 
Thus, for $\cS=\Z^d$ our theorem generalizes their results in two ways: We show that their solution also satisfies our stronger martingale problem.
Further, we allow for more general initial conditions since we do not require the types to be separated initially, while~\cite{KM11b,DM12} 
assume that $\mu_0(k)\nu_0(k) = 0$ for all $k \in \Z^d$.
Under this condition, by uniqueness our solution coincides of course with the
infinite rate process constructed in \cite{KM11b} and \cite{DM12}.

Nevertheless, the work in~\cite{KM11b} goes substantially beyond what we claim here in the sense that they are also able
to show that the solution of $\mathrm{dSBM}(\rho,\infty)$ can be characterized as a solution to a jump-type
SDE, see~\cite[Thm~1.3]{KM11b} for $\rho =0$ and~\cite[Prop.~4.14]{DM12} for $\rho \ne 0$.
Moreover,~\cite{KM11b} considers more general operators than the discrete Laplacian. 
Also, they define solutions as taking values in a  Liggett-Spitzer space (characterized by a suitable test function 
$\beta : \Z^d \ra \R^+$), whereas we follow~\cite{DP98} in using tempered measures as state space. 
By choosing $\beta$ in a suitable way, one can show that for initial conditions
that satisfy~\eqref{eq:dsep} our solution agrees with theirs.
\end{rem}

Now we can finally state the main result of our paper, which says that for $\rho\in(-1,0)$ the (one-dimensional) discrete-space infinite rate model $\mathrm{dSBM}(\rho,\infty)$ 
converges under diffusive rescaling (in the Meyer-Zheng sense) to the continuous-space model $\mathrm{cSBM}(\rho,\infty)$ introduced in~\cite{BHO15}. 
More precisely, given initial conditions $(\mu_0,\nu_0)$ for $\mathrm{cSBM}(\rho,\infty)$
we define for each $n\in\N$ initial conditions $(u_0^\ssup{n},v_0^\ssup{n})$ for $\mathrm{dSBM}(\rho,\infty)$ by
\begin{equation}\label{defn:variabel initial conditions}
u_0^\ssup{n}(k):=n\,\mu_0\left([\tfrac{k}{n},\tfrac{k+1}{n})\right) \quad\text{and}\quad v_0^\ssup{n}(k) := n\,\nu_0\left([\tfrac{k}{n},\tfrac{k+1}{n})\right),\qquad k\in\Z.
\end{equation}
It is easy to see that for $(\mu_0,\nu_0)\in\cM_\tem(\R)^2$ (resp.\ $\cM_\rap(\R)^2$), we have $(u_0^\ssup{n},v_0^\ssup{n})\in\cM_\tem(\Z)^2$ (resp.\ $\cM_\rap(\Z)^2$).
Denote by $(u_t^\ssup{n},v_t^\ssup{n})_{t\ge0}$ the solution to $\mathrm{dSBM}(\rho,\infty)_{u_0^\ssup{n},v_0^\ssup{n}}$. 
We define a sequence $(\mu_t^\ssup{n},\nu_t^\ssup{n})_{t\ge0}$ of approximating processes for $\mathrm{cSBM}(\rho,\infty)_{\mu_0,\nu_0}$ by diffusive rescaling, as follows:
For any Borel subset $B\subseteq\R$, let
\begin{equation}\label{defn:rescaling}
\mu^\ssup{n}_t(B) := \frac{1}{n} \sum_{k\in\Z}u^\ssup{n}_{n^2 t}(k)\, \1_{B}(k/n) \quad\text{and} \quad\nu^\ssup{n}_t(B) := \frac{1}{n} \sum_{k\in\Z}v^\ssup{n}_{n^2 t}(k)\, \1_{B}(k/n) , \qquad  t \geq 0 . 
\end{equation}
Observe that for each $n\in \N$, the measures $\mu^\ssup{n}_t$
are concentrated on the scaled lattice $\frac{1}{n}\Z$, with $\frac{1}{n}u^\ssup{n}_{n^2t}(n\,\cdot)$ as density w.r.t.\ counting measure, 
and analogously for $\nu^\ssup{n}_t$. 
Considered as discrete measures on $\R$, it is easy to see that indeed $(\mu_t^\ssup{n},\nu_t^\ssup{n})_{t\ge0}$ takes values in $D_{[0,\infty)}(\cM_\tem(\R)^2)$ (resp.\ $D_{[0,\infty)}(\cM_\rap(\R)^2)$).
Also note that
from \eqref{defn:variabel initial conditions}, 
it is clear that 
\begin{equation}\label{conv variabel initial conditions}
(\mu_0^\ssup{n}, \nu_0^\ssup{n})\to(\mu_0,\nu_0)
\end{equation}
in $\calM_\tem(\R)^2$ (resp.\ $\cM_\rap(\R)^2$) as $n\to\infty$.

\begin{thm}\label{thm:conv}
Let $\rho\in(-1,0)$ and consider 
initial conditions $(\mu_0,\nu_0)\in\calM_\tem(\R)^2$ (resp.\ $\calM_\rap(\R)^2$). 
For each $n\in\N$ denote by $(u_t^\ssup{n},v_t^\ssup{n})_{t \geq 0}$ the solution to $\mathrm{dSBM}(\rho,\infty)_{u_0^\ssup{n},v_0^\ssup{n}}$ from Theorem \ref{thm:MPinf_discrete} for $\cS=\Z$, 
with initial conditions $(u_0^\ssup{n},v_0^\ssup{n})$ defined by \eqref{defn:variabel initial conditions}.
Then as $n\to\infty$, the sequence of 
processes $(\mu_t^\ssup{n},\nu_t^\ssup{n})_{t \geq 0}$ from \eqref{defn:rescaling} converges weakly in $D_{[0,\infty)}(\calM_\tem(\R)^2)$ (resp.\ $D_{[0,\infty)}(\calM_\rap(\R)^2)$) equipped with the Meyer-Zheng `pseudo-path' topology to the unique solution $(\mu_t,\nu_t)_{t \geq 0}$ of $\mathrm{cSBM}(\rho,\infty)_{\mu_0,\nu_0}$ from 
Theorem~\ref{thm:MPinf_discrete} for $\cS=\R$.
\end{thm}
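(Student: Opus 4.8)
The plan is to prove Theorem~\ref{thm:conv} by the standard ``tightness plus identification of limit points'' strategy, using the Meyer-Zheng topology crucially to make tightness cheap. First I would establish tightness of the sequence $(\mu_t^\ssup{n},\nu_t^\ssup{n})_{t\ge0}$ in $D_{[0,\infty)}(\cM_\tem(\R)^2)$ (resp.\ $\cM_\rap(\R)^2$) with the Meyer-Zheng pseudo-path topology. The key point here, following the approach of \cite{BHO15}, is that tightness in this topology reduces (via the Meyer--Zheng criterion) to a uniform-in-$n$ bound on the expected total variation / ``conditional variation'' of suitable real-valued functionals $\langle\mu_t^\ssup{n},\phi\rangle$ together with a compact containment condition. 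These bounds should come from first-moment estimates: since the first moments of $\mathrm{dSBM}(\rho,\infty)$ evolve according to the (discrete) heat flow, the rescaled first moments $\E[\mu_t^\ssup{n}(\cdot)]$ converge to the continuum heat flow applied to $\mu_0$, and in particular are bounded uniformly in $n$ on compacts in the appropriate tempered/rapidly-decreasing sense. The monotone process $\Lambda^\ssup{n}$ (or rather its rescaled version) must be controlled similarly; its expectation is finite by \eqref{finiteness Lambda 1} and one needs a uniform-in-$n$ version of this, again obtainable from moment duality / second-moment computations for the finite-rate model followed by passing $\gamma\to\infty$, or directly from the SDE description in \cite{KM11b}.

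The second step is identification of limit points. Let $(\mu_t,\nu_t,\Lambda_t)_{t\ge0}$ be any weak limit point (including a limit of the associated $\Lambda^\ssup{n}$, which I would carry along as part of the tightness argument so that limit points come equipped with a candidate $\Lambda$). I would show that this limit (i) solves the continuum martingale problem $(\mathbf{MP}_F(\R))_{\mu_0,\nu_0}^\rho$ and (ii) satisfies the separation-of-types condition \eqref{eq:csep}; then Theorem~\ref{thm:MPinf_discrete}(a) gives uniqueness, hence the full sequence converges. For (i): each prelimit $(\mu_t^\ssup{n},\nu_t^\ssup{n})$ solves the discrete martingale problem $(\mathbf{MP}_F(\Z))^\rho$ with the discrete Laplacian and a factor reflecting the scaling. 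Rescaling diffusively, the discrete Laplacian $\tfrac{\Delta}{2}$ on $\tfrac1n\Z$ with the $n^2$ time change converges to the continuum $\tfrac\Delta2$ applied to a smooth test function $\phi\in\calC^{(2)}_{\rap}(\R)$ (resp.\ $\calC^{(2)}_{\tem}$). Combined with the convergence of initial conditions \eqref{conv variabel initial conditions} and the convergence of $\Lambda^\ssup{n}$, one passes to the limit inside the three terms of \eqref{MP9}. The martingale property is preserved under the (weak) limit provided one has uniform integrability, which the first/second moment bounds supply. A subtlety is that under the Meyer--Zheng topology one must be slightly careful: the functional $F(\mu_t,\nu_t,\phi,\psi)$ need only be evaluated at a.e.\ $t$, and one argues via integrated (in time) versions of the martingale identity, exactly as in \cite{BHO15}.

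For (ii), the separation-of-types property: in the discrete model each $(u_t^\ssup{n},v_t^\ssup{n})$ satisfies $u_t^\ssup{n}(k)v_t^\ssup{n}(k)=0$ pointwise a.s.\ by Theorem~\ref{thm:MPinf_discrete}. I would translate this into the continuum condition \eqref{eq:csep} by smoothing: apply the continuum heat semigroup $S_\eps$, note $S_\eps\mu_t^\ssup{n}(x)S_\eps\nu_t^\ssup{n}(x)$ is a sum over lattice pairs, separate diagonal ($k=\ell$, which vanishes) from off-diagonal terms, and bound the off-diagonal contribution using the second-moment bound and the kernel decay, so that $\E[S_\eps\mu_t^\ssup{n}(x)S_\eps\nu_t^\ssup{n}(x)]$ is controlled uniformly in $n$ and tends to $0$ as $\eps\to0$; the upper bound by $S_{t+\eps}\mu_0(x)S_{t+\eps}\nu_0(x)$ comes from the first-moment (heat-flow) domination. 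Passing to the limit $n\to\infty$ along a subsequence realizing the weak limit then gives \eqref{eq:csep} for $(\mu_t,\nu_t)$. I would also record that, since each $\Lambda^\ssup{n}$ is increasing with finite expectation, the limiting $\Lambda$ is increasing c\`adl\`ag with $\E[\Lambda_t(dx)]$ a tempered (resp.\ rapidly decreasing) measure, verifying \eqref{finiteness Lambda 1}.

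The main obstacle I expect is the joint handling of $\Lambda^\ssup{n}$: establishing its tightness (as an increasing measure-valued process, in a topology compatible with the Meyer--Zheng convergence of the pair) and ensuring that the limit of $\Lambda^\ssup{n}$ really is the $\Lambda$ appearing in the limiting martingale problem, rather than a larger process. This requires a uniform-in-$n$ second-moment estimate of the form $\E\big[\langle\Lambda_t^\ssup{n},\phi\rangle\big]\le C(\phi,t)$ together with a continuity/no-loss-of-mass argument in the limit — essentially the discrete analogue of the corresponding step in \cite{BHO15}, but now one must check that the diffusive rescaling does not distort the quadratic-variation measure. A secondary technical point is that the Meyer--Zheng topology does not interact well with pointwise-in-time statements, so all identifications (martingale property, separation of types, the role of $\Lambda$) must be phrased in terms of time-integrated functionals and almost-everywhere-$t$ evaluations, with care taken that the exceptional null sets can be chosen uniformly.
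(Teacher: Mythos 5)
Your overall strategy — tightness in the Meyer--Zheng topology, carrying $\Lambda^\ssup{n}$ along as part of the triple, identification of limit points via the martingale problem $(\mathbf{MP}_F(\R))^\rho$ plus separation-of-types, and then uniqueness from Theorem~\ref{thm:MPinf_discrete}(a) — matches the paper's proof exactly in outline. The tightness sketch and the martingale-problem identification (uniform integrability of $F(\mu^\ssup{n}_t,\nu^\ssup{n}_t,\phi,\psi)$-increments, passing to the Meyer--Zheng limit, using \cite[Thm.~11]{MZ84}) are also consistent with what the paper does in Lemma~\ref{lemma tightness}, Proposition~\ref{prop:tightness} and Proposition~\ref{prop:limit_points_MP}. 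Two small imprecisions: the moment estimates that drive tightness are \emph{second}-moment bounds obtained from the Green-function representation of $\mathrm{dSBM}(\rho,\infty)$ and BDG, not first-moment (heat-flow) bounds alone, and your worry that the limiting $\Lambda$ might be ``a larger process'' is not a real issue — the martingale problem merely requires \emph{some} $\Lambda$ with the stated properties, and uniqueness then applies to $(\mu,\nu)$ regardless.

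The genuine gap is in your treatment of separation-of-types. You propose to verify \eqref{eq:csep} for the prelimits by writing $S_\eps\mu^\ssup{n}_t(x)S_\eps\nu^\ssup{n}_t(x)$ as a double lattice sum, discarding the diagonal $k=\ell$ (which vanishes by the discrete separation-of-types), and bounding the off-diagonal part ``using the second-moment bound and the kernel decay'' so that it tends to $0$ as $\eps\to0$. This does not work as stated: as $\eps\to 0$ the heat kernel $p_\eps$ concentrates near the diagonal, and the off-diagonal terms $k\ne\ell$ with $|k/n-\ell/n|$ small make a non-vanishing contribution unless the mixed second moment $\E[u^\ssup{n}_{n^2t}(k)\,v^\ssup{n}_{n^2t}(\ell)]$ itself degenerates near the diagonal in a quantitative way. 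That degeneration is precisely the content of the exact identity
\begin{equation*}
\E_{u_0^\ssup{n},v_0^\ssup{n}}\!\left[\langle u^\ssup{n}_{n^2t},\phi^\ssup{n}\rangle_\Z\,\langle v^\ssup{n}_{n^2t},\psi^\ssup{n}\rangle_\Z\right]
=\big\langle \phi^\ssup{n}\otimes\psi^\ssup{n},\,{^d\tilde S}_{n^2 t}(u_0^\ssup{n}\otimes v_0^\ssup{n})\big\rangle_{\Z^2},
\end{equation*}
from Proposition~\ref{prop:moments}(a), where ${^d\tilde S}$ is the semigroup of the random walk killed on the diagonal. The paper (Lemma~\ref{lem:moments 1} and Corollary~\ref{cor:sep}) then uses Fatou plus the diffusive convergence ${^d\tilde S}_{n^2t}\to\tilde S_t$ (Lemma~\ref{lem:semigroup convergence}) to get $\E_{\mu_0,\nu_0}[\langle\mu_t,\phi\rangle\langle\nu_t,\psi\rangle]\le\langle\phi\otimes\psi,\tilde S_t(\mu_0\otimes\nu_0)\rangle$, and it is the fact that the continuum killed density $\tilde p_t$ is continuous and vanishes on the diagonal (see \eqref{transition density tilde S}) that forces $\E[S_\eps\mu_t(x)S_\eps\nu_t(x)]\to 0$. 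Your heat-flow bound $S_{t+\eps}\mu_0(x)S_{t+\eps}\nu_0(x)$ is only the \emph{domination} half of \eqref{eq:csep}; it does not tend to $0$ as $\eps\to0$. So the key ingredient you are missing — and would need to identify for the argument to close — is the killed-semigroup representation of the mixed second moment for the discrete infinite-rate model and its convergence, under diffusive rescaling, to the killed Brownian semigroup.
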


\begin{rem}\label{rem:initial conditions}
For the convergence result of Theorem \ref{thm:conv}, it is of course not essential that the initial conditions for the rescaled discrete model are given exactly as in \eqref{defn:variabel initial conditions}. For example, \eqref{defn:variabel initial conditions} may be replaced by 
\begin{equation}\label{defn:variabel initial conditions 2}
u_0^\ssup{n}(k):=n\,\mu_0\left((\tfrac{k}{n},\tfrac{k+1}{n}]\right) \quad\text{and}\quad v_0^\ssup{n}(k) := n\,\nu_0\left((\tfrac{k}{n},\tfrac{k+1}{n}]\right),\qquad k\in\Z.
\end{equation}
What we really need is that $(u_0^\ssup{n},v_0^\ssup{n})$ be defined in such a way that \eqref{conv variabel initial conditions} holds. 
\end{rem}

In order to state our next result, whose proof is an application of the convergence in Theorem \ref{thm:conv},
we need some more notation and definitions:
For a Radon measure $\mu$ on $(\R,\calB(\R))$, we denote by $\supp(\mu)$ its measure-theoretic support, i.e.
\[ \supp(\mu):=\{x\in\R :  \mu\left(B_\eps(x)\right)>0 \text{ for all }\eps>0\} . \]
Further, let
\[L(\mu):=\inf\supp(\mu)\in\bar\R,\qquad R(\mu):=\sup\supp(\mu)\in\bar\R\]
denote the leftmost resp. rightmost point in the support of $\mu$. 
Note that $\mu=0$ if and only if $\supp(\mu)=\emptyset$, which is equivalent to $L(\mu)=\infty$, $R(\mu)=-\infty$.
The measure $\mu$ is called \emph{strictly positive} if its support is the whole real line, or equivalently if it is non-zero on every non-empty open set.

\begin{thm}\label{thm:SST}
Suppose $\rho \in (-1,0)$,
and assume 
initial conditions $(\mu_0,\nu_0)\in\cM_\tem(\R)^2$ or $\cM_\rap(\R)^2$ which are mutually singular and such that $R(\mu_0)\le L(\nu_0)$. 
Assume further that $\mu_0+\nu_0$ is not the zero measure. 
Let $(\mu_t,\nu_t)_{t\ge0}$ denote the solution to $\mathrm{cSBM}(\rho,\infty)_{\mu_0,\nu_0}$. 
Then the following holds:
\begin{itemize}
 \item[a)] 
The process $(\mu_t,\nu_t)_{t \geq 0}$ preserves the \emph{initial ordering of types} in the sense that, almost surely,
\begin{equation}\label{eq:SST} R(\mu_t) \leq L(\nu_t) \quad \mbox{for all } t \geq 0. \end{equation}
\item[b)]
For all fixed $t > 0$, almost surely, the 
measures $\mu_t$ and $\nu_t$ are mutually singular and have
a \emph{single-point interface} in the sense that 
\[R(\mu_t) = L(\nu_t),\]
and the sum $\mu_t+\nu_t$ is strictly positive.
\end{itemize}
\end{thm}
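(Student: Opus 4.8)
The plan is to transfer the corresponding statement from the discrete model via the convergence in Theorem~\ref{thm:conv}, after first establishing the discrete analogue. So the first step is to prove, for $\mathrm{dSBM}(\rho,\infty)$ on $\Z$, that if the initial data $(u_0,v_0)$ are such that $u_0(k)v_0(k)=0$ for all $k$ and there is a separating site (all mass of $u_0$ to the left, all mass of $v_0$ to the right), then the ordering is preserved for all times: there is a (random, c\`adl\`ag) interface point such that $u_t$ sits to its left and $v_t$ to its right. For the jump-type SDE description of Klenke--Mytnik~\cite{KM11b} (and D\"oring--Mytnik~\cite{DM12} for $\rho\ne0$) this is essentially built in, since at each site only one type is ever present and the dynamics of the ``boundary'' between the $u$-region and the $v$-region is that of a single interface particle performing a jump process; one should be able to read off from the SDE (or from the moment duality / the martingale problem $\mathbf{MP}_F(\Z)$ applied to suitable test functions) that the event $\{R(u_t)\le L(v_t)\ \forall t\}$ has probability one. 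This gives part~a) at the discrete level.

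Next I would push this through the scaling limit. Apply Theorem~\ref{thm:conv} with the discrete initial conditions $(u_0^\ssup{n},v_0^\ssup{n})$ of \eqref{defn:variabel initial conditions}: because $R(\mu_0)\le L(\nu_0)$, the masses $u_0^\ssup{n}$ and $v_0^\ssup{n}$ are again supported on complementary half-lines of $\tfrac1n\Z$ (up to the single boundary cell, which carries mass of only one type since $\mu_0,\nu_0$ are mutually singular and ordered), so the discrete ordering result applies to each $(\mu_t^\ssup{n},\nu_t^\ssup{n})$. The ordering property $R(\mu_t^\ssup{n})\le L(\nu_t^\ssup{n})$ for all $t$ can be rephrased as: for every pair of continuous compactly supported $f,g\ge0$ with $\supp f$ lying strictly to the right of $\supp g$, one has $\langle\mu_t^\ssup{n},f\rangle\langle\nu_t^\ssup{n},g\rangle=0$ for all $t$ --- equivalently $\int_0^\infty \langle\mu_t^\ssup{n},f\rangle\langle\nu_t^\ssup{n},g\rangle\,dt=0$. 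Since the map $(\mu_\cdot,\nu_\cdot)\mapsto \int_0^T\langle\mu_t,f\rangle\langle\nu_t,g\rangle\,dt$ is continuous (or at least bounded and a.s.\ continuous) on $D_{[0,\infty)}(\cM_\tem(\R)^2)$ in the Meyer-Zheng topology, the weak convergence of Theorem~\ref{thm:conv} forces $\int_0^T\langle\mu_t,f\rangle\langle\nu_t,g\rangle\,dt=0$ a.s.\ for the limit, for all such $f,g$ and all $T$. A density argument over a countable family of $(f,g)$ then yields $R(\mu_t)\le L(\nu_t)$ for a.e.\ $t$, and c\`adl\`ag regularity of $t\mapsto(\mu_t,\nu_t)$ (together with the Markov property from Theorem~\ref{thm:MPinf_discrete}) upgrades this to all $t\ge0$, proving~a).

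For part~b), fix $t>0$. Mutual singularity of $\mu_t$ and $\nu_t$ is the separation-of-types property~\eqref{eq:csep}/\eqref{singularity1} recalled in Theorem~\ref{thm:MPinf_discrete}, which together with~a) already gives that $R(\mu_t)\le L(\nu_t)$ and that the two supports can overlap in at most one point; it remains to rule out a ``gap'' ($R(\mu_t)<L(\nu_t)$ with the open interval in between being null for both measures) and to show $\mu_t+\nu_t$ is strictly positive. The strict positivity of $\mu_t+\nu_t$, hence the coincidence $R(\mu_t)=L(\nu_t)$, I would again obtain from the discrete model: in $\mathrm{dSBM}(\rho,\infty)$ with ordered separated data, total mass in a bounded region is strictly positive for all $t>0$ because the interface performs a non-degenerate (heat-flow-type) motion and mass is transported across the lattice by the migration term, so no region stays empty. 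Passing to the limit, for any nonempty open interval $I$ one shows $\langle\mu_t+\nu_t,\1_I\rangle>0$ a.s.; since $\langle\mu_t,\1_I\rangle\langle\nu_t,\1_I\rangle\to0$-type estimates force one of the two to be positive and~a) pins down which, we conclude $\supp(\mu_t)\cup\supp(\nu_t)=\R$ and $R(\mu_t)=L(\nu_t)$.

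The main obstacle I anticipate is the final limit-taking for strict positivity in part~b): the functional $(\mu_\cdot,\nu_\cdot)\mapsto \langle\mu_t+\nu_t,\1_I\rangle$ at a \emph{fixed} time $t$ is not continuous in the Meyer-Zheng topology (which only controls the process up to modification on Lebesgue-null time sets), so one cannot directly deduce the fixed-time statement from weak convergence. The workaround will be to prove the strict-positivity bound in an \emph{integrated} form, $\int_a^b\langle\mu_t+\nu_t,\1_I\rangle\,dt>0$ for all $0\le a<b$ and all nonempty open $I$ --- this \emph{is} a bona fide Meyer-Zheng-continuous functional and transfers cleanly from the discrete model --- and then to use the Markov property plus right-continuity of $t\mapsto(\mu_t,\nu_t)$ in Theorem~\ref{thm:MPinf_discrete} to localize it to every fixed $t>0$. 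Making this localization rigorous (ensuring no exceptional null set of times survives, using a $0$-$1$-law or the strong Markov property at the first time the region becomes empty) is the delicate point; everything else is routine once the discrete ordering lemma is in hand.
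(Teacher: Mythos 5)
Your overall strategy (prove the discrete analogue, then transfer along the scaling limit of Theorem~\ref{thm:conv}) matches the paper, and your treatment of part~a) is a workable variant: the paper instead establishes \emph{lower semicontinuity} of $\mu\mapsto R(\mu)$ and \emph{upper semicontinuity} of $\nu\mapsto L(\nu)$ on $\cM_\tem(\R)$ (a deterministic lemma), applies it pathwise via Skorokhod representation on a full-Lebesgue-measure set $I$ of times where finite-dimensional Meyer-Zheng convergence holds, intersects over a countable dense $J\subseteq I$, and uses right-continuity plus the same semicontinuity to pass to all $t$. Your rephrasing via $\int_0^T\langle\mu_t,f\rangle\langle\nu_t,g\rangle\,dt=0$ would also work, but note you quietly need the uniform $(2+\eps)$-moment bounds (Lemma~\ref{lemma tightness}) to get the uniform integrability that makes this functional behave well under Meyer-Zheng convergence, and your final upgrade to ``all $t$'' is really the same semicontinuity argument the paper makes explicit; the ``Markov property'' you invoke there is not what does the work.

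The genuine gap is in part~b), precisely where you yourself flag the difficulty. You are right that Meyer-Zheng convergence gives no control at a \emph{fixed} time, and your proposed fix --- prove strict positivity in time-integrated form and then ``localize'' by a $0$--$1$ law or the strong Markov property at the first empty time --- is not carried out and is not how this is resolved. The paper instead uses a concrete fixed-time tool you do not mention: the exit-measure characterization of one-dimensional marginals. From the Klenke--Oeler Trotter approximation (and its extension to $\rho\ne0$ in~\cite{DM12}) one has, for the discrete process,
\[
\E_{u_0,v_0}\bigl[Q^\rho_{\langle u_t,\phi\rangle,\langle v_t,\psi\rangle}(\cdot)\bigr]
 = Q^\rho_{\langle {}^dS_tu_0,\phi\rangle,\langle {}^dS_tv_0,\psi\rangle}(\cdot),
\]
where $Q^\rho_{x,y}$ is the exit law of a $\rho$-correlated planar Brownian motion from $(\R^+)^2$. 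This identity \emph{does} pass to the continuum limit along Theorem~\ref{thm:conv} (using continuity of $(x,y)\mapsto Q^\rho_{x,y}$, Lemma~\ref{lem:semigroup convergence}, and right-continuity of the paths to get it for \emph{all} $t>0$, not just a.e.\ $t$). Since $Q^\rho_{x,y}(\{(0,0)\})=0$ whenever $(x,y)\neq(0,0)$, strict positivity of $\mu_t+\nu_t$ at every fixed $t>0$ follows immediately. Without this identity, the fixed-time statement does not follow from the scaling limit alone.

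Finally, your remark that mutual singularity of $\mu_t$ and $\nu_t$ ``is'' the separation-of-types property is too quick: \eqref{eq:csep} is a statement about smoothed products and does not by itself give mutual singularity of the measures (the version~\eqref{singularity1} requires absolute continuity, which is not available here). The paper derives mutual singularity by first combining~\eqref{eq:SST} with strict positivity to get $R(\mu_t)=L(\nu_t)=:I_t$, observing that $\mu_t((I_t,\infty))=0$ and $\nu_t((-\infty,I_t))=0$, and then using a smoothing computation with the separation-of-types bound to rule out the only remaining overlap, namely both measures having an atom at $I_t$.
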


\begin{rem}
Of course, Theorem \ref{thm:SST} holds in particular for complementary Heaviside initial conditions $(\mu_0,\nu_0)=(\1_{\R^-},\1_{\R^+})$ as considered in \cite{BHO15}.
Its proof 
proceeds by first showing the analogous result for the discrete-space model and then using the convergence result of Theorem \ref{thm:conv}, see Section~\ref{sec:proof SPI} below. 
Note that the property \eqref{eq:SST} holds 
pathwise on a set of probability one, while
the second part of the theorem is restricted to \emph{fixed} times and does not ensure existence of an `interface process' $(I_t)_{t\ge0}$ such that almost surely we have $I_t:= R(\mu_t)=L(\nu_t)$ for all $t>0$.
However, the restriction to fixed times applies also to the separation-of-types property \eqref{eq:csep} and to the absolute continuity in \cite[Thm.\ 1.5]{BHO15}. 
It is in fact notoriously difficult for this class of continuous-space models to obtain uniform-in-time results (see e.g. the discussion in \cite[Sec.~7]{Dawsonetal2002a} concerning the two-dimensional finite rate mutually catalytic branching model).
In our case, we deduce $R(\mu_t)  = L(\mu_t)$ by combining~\eqref{eq:SST} 
with strict positivity of the sum $\mu_t+\nu_t$,
and it is this latter property that we can only show for fixed times and not in a pathwise sense.
Similarly, for the mutual singularity of the measures we use~\eqref{eq:SST} together with the `separation of types'-condition \eqref{eq:csep}, and again the latter is a property for fixed times.
Finally, note that also for discrete space the `interface problem', i.e. the question whether the discrete analogue of the `single-point interface' property holds pathwise almost surely, is open so far, see e.g. \cite{KO10}, p. 485. 
In fact, it is not even known whether the process is strictly positive pathwise, see the conjecture on p. 11 in \cite{KM11b}.
\end{rem}

For the proof of our convergence result Thm.\ \ref{thm:conv},  
we need that in both discrete and continuous space, $\mathrm{SBM}(\rho,\infty)$ has $(2+\eps)$-th moments if $\rho<0$ and $\eps$ is chosen sufficiently small.
This is a generalization of \cite[Thm.\ 1.2]{DM11a}, who consider the discrete model. 
In fact, we can show that $\mathrm{SBM}(\rho,\infty)$ has finite $p$-th moments for any $p>2$ such that $\rho$ is `sufficiently close' to $-1$ w.r.t.\ $p$. 
Moreover, second moments can be calculated explicitly.
In order to formulate these results, it is convenient to introduce the following notation: 
Again let $\cS$ be either $\R$ or $\Z^d$ for some $d\in\N$, and let $(S_t)_{t\ge0}$ denote the usual heat semigroup on $\cS$, 
i.e. the semigroup of standard Brownian motion if $\cS=\R$ and the semigroup of simple symmetric (continuous-time) random walk if $\cS=\Z^d$.
Further, we write $(S_t^\ssup{2})_{t\ge0}$ for the corresponding two- resp.\ $2d$-dimensional semigroup on $\cS^2$.
Finally, we define a semigroup $(\tilde S_t)_{t\ge0}$ of the respective process killed upon hitting the diagonal in $\cS^2$, i.e.
\begin{equation}\label{eq: tilde S discrete}
{\tilde S}_tf(x,y) := \E_{x,y}\left[f(X_t^\ssup{1},X_t^\ssup{2})\1_{\{t<\tau^{1,2}\}}\right], \qquad f:\cS^2\to\R,\;(x,y)\in\cS^2,
\end{equation}
where $\tau^{1,2}:=\inf\{t>0: X_t^\ssup{1}=X_t^\ssup{2}\}$ denotes the first hitting time of the diagonal. 
Here  $(X^\ssup{1},X^\ssup{2})$ denotes a simple symmetric (continuous-time) $2d$-dimensional random walk if $\cS=\Z^d$ and a two-dimensional standard Brownian motion if $\cS=\R$.
We remark that $\tilde S_t$ is symmetric and 
in the continuous case 
has a transition density $\tilde p_t$ w.r.t.\ Lebesgue measure which can be expressed in terms of the usual heat kernel, see \eqref{transition density tilde S} in the appendix. 
Thus we can also let $\tilde S_t$ act on tempered or rapidly decreasing measures $m$ on $\R^2$ via
\[\tilde S_tm(x,y):=\iint_{\R^2}\tilde p_t(x,y;a,b)\,m(d(a,b)),\qquad (x,y)\in\R^2,\]
and from \eqref{transition density tilde S} it is easy to see that the function $\tilde S_tm$ is continous and vanishes on the diagonal in $\R^2$.

\begin{prop}[Moments of $\mathrm{SBM}(\rho,\infty)$]\label{prop:moments}
Assume $\rho\in(-1,0)$.
Consider initial conditions $(u_0,v_0)\in\calB^+_\tem(\cS)^2$ (resp.\ $(u_0,v_0)\in\calB^+_\rap(\cS)^2$), 
and let $(\mu_t,\nu_t)_{t\ge0}$ denote the solution of $\mathrm{SBM}(\rho,\infty)_{u_0,v_0}$. 
\begin{itemize}
\item[a)] 
We have the explicit second moment formulas 
\begin{equation}\label{second mixed moment 2}
\E_{u_0,v_0}\left[\langle \mu_t,\phi\rangle_\cS\,\langle \nu_t,\psi\rangle_\cS\right]=\big\langle \phi\otimes\psi, {\tilde S}_t(u_0\otimes v_0)\big\rangle_{\cS^2},
\end{equation}
\begin{align}\label{second moment 2}\bal
\E_{u_0,v_0}\left[\langle \mu_t,\phi\rangle_\cS^2\right]&=
\langle \phi, S_tu_0\rangle_\cS^2 + \frac{1}{|\rho|}\big\langle \phi\otimes\phi, (S_t^\ssup{2}-{\tilde S}_t)(u_0\otimes v_0)\big\rangle_{\cS^2},\\
\E_{u_0,v_0}\left[\langle \nu_t,\psi\rangle_\cS^2\right]&=
\langle \psi, S_tv_0\rangle_\cS^2 + \frac{1}{|\rho|}\big\langle \psi\otimes\psi, (S_t^\ssup{2}-{\tilde S}_t)(u_0\otimes v_0)\big\rangle_{\cS^2}
\eal\end{align}
for all $t>0$ and test functions $\phi,\psi\in\bigcup_{\lambda>0}\calC_{-\lambda}^+(\cS)$ (resp.\ $\bigcup_{\lambda>0}\calC_\lambda^+(\cS)$).
\item[b)]
Let $p>2$ such that $\rho+\cos(\pi/p)<0$. Then we have finiteness of $p$-th moments
\begin{equation}\label{finite p-moment}
\sup_{r\in[0,T]}\E_{u_0,v_0}\left[|\langle \mu_r,\phi\rangle_\cS|^{p}\right] < \infty,\qquad \sup_{r\in[0,T]}\E_{u_0,v_0}\left[|\langle \nu_r,\phi\rangle_\cS|^{p}\right] 
<\infty
\end{equation}
for all $T>0$ and test functions $\phi,\psi\in\bigcup_{\lambda>0}\calC_{-\lambda}^+(\cS)$ (resp.\ $\bigcup_{\lambda>0}\calC_\lambda^+(\cS)$).
\end{itemize}
\end{prop}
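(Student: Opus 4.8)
\emph{Proof strategy.} The plan is to obtain both parts from the corresponding facts about the finite-rate models $\mathrm{SBM}(\rho,\gamma)$ and then to let $\gamma\uparrow\infty$, using the weak convergence $\mathrm{SBM}(\rho,\gamma)\Rightarrow\mathrm{SBM}(\rho,\infty)$ of Theorem~\ref{thm:MPinf_discrete}(b) (which for $\cS=\R$ is contained in Theorem~\ref{thm:old2_new}) together with uniform integrability; the cases $\cS=\Z^d$ and $\cS=\R$ are treated in parallel. One should prove b) first, since a moment bound of some order $p>2$ is needed in order to pass to the limit in a).

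\emph{Part b).} For each fixed $\gamma$ the model $\mathrm{SBM}(\rho,\gamma)$ has finite moments of all orders (classical moment duality, \cite{EF04}); the point of b) is that the $p$-th moment stays bounded as $\gamma\to\infty$. I would establish
\[
\sup_{\gamma>0}\ \sup_{r\in[0,T]}\E_{u_0,v_0}\big[|\langle\mu^\sse{\gamma}_r,\phi\rangle_\cS|^p\big]<\infty
\]
precisely in the regime $\rho+\cos(\pi/p)<0$, i.e.\ $p<\pi/\arccos(-\rho)$: this is the threshold identified in the moment analysis of \cite{BDE11}, corresponding to the integrability exponent for the exit location of planar Brownian motion from a wedge of opening angle $\arccos(-\rho)$, which is how the $\rho$-correlated branching feeds into the moment dual and why the dual functional remains bounded in this regime. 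Fatou's lemma along $\mathrm{SBM}(\rho,\gamma)\Rightarrow\mathrm{SBM}(\rho,\infty)$ then gives \eqref{finite p-moment}. For $\cS=\Z^d$ one may alternatively appeal to \cite[Thm.~1.2]{DM11a} (and \cite{DM12} for $\rho\neq0$) directly: by Theorem~\ref{thm:MPinf_discrete} our $\mathrm{dSBM}(\rho,\infty)$ agrees with their process, and since every $\phi\in\calB^+_\tem(\Z^d)$ (resp.\ $\calB^+_\rap(\Z^d)$) is dominated by a suitable Liggett--Spitzer weight, their bound transfers by monotonicity.

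\emph{Part a).} By It\^o's formula for the semimartingales $\langle u^\sse{\gamma}_t,\phi\rangle_\cS,\langle v^\sse{\gamma}_t,\psi\rangle_\cS$ solving $\mathrm{SBM}(\rho,\gamma)$, the first moments evolve by the heat flow, $\E_{u_0,v_0}[\langle u^\sse{\gamma}_t,\phi\rangle_\cS]=\langle\phi,S_tu_0\rangle_\cS$ for all $\gamma$ and all $\rho$, and the two-point functions $G^\sse{\gamma}_t:=\E_{u_0,v_0}[u^\sse{\gamma}_t\otimes v^\sse{\gamma}_t]$ and $H^\sse{\gamma}_t:=\E_{u_0,v_0}[u^\sse{\gamma}_t\otimes u^\sse{\gamma}_t]$ (genuine functions on $\cS^2$ for finite $\gamma$) satisfy the Duhamel equations
\[
G^\sse{\gamma}_t=S^\ssup{2}_t(u_0\otimes v_0)+\rho\,\gamma\!\int_0^t S^\ssup{2}_{t-s}\,\iota_*\big(\E_{u_0,v_0}[u^\sse{\gamma}_sv^\sse{\gamma}_s]\big)\,ds,\qquad H^\sse{\gamma}_t=S^\ssup{2}_t(u_0\otimes u_0)+\gamma\!\int_0^t S^\ssup{2}_{t-s}\,\iota_*\big(\E_{u_0,v_0}[u^\sse{\gamma}_sv^\sse{\gamma}_s]\big)\,ds,
\]
where $\iota_*$ pushes a measure on $\cS$ forward onto the diagonal of $\cS^2$; the correlation $\rho$ occurs only in the first equation, because the $u$--$v$ cross-variation rate is $\rho\gamma\langle u^\sse{\gamma}_sv^\sse{\gamma}_s,\cdot\rangle$ whereas the $u$--$u$ (co)variation rate is $\gamma\langle u^\sse{\gamma}_sv^\sse{\gamma}_s,\cdot\rangle$. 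Subtracting the heat-flow terms yields the \emph{exact} identity, valid for every $\gamma$,
\[
H^\sse{\gamma}_t-S^\ssup{2}_t(u_0\otimes u_0)=\tfrac1\rho\big(G^\sse{\gamma}_t-S^\ssup{2}_t(u_0\otimes v_0)\big).
\]
Since $\rho<0$, dropping the nonpositive correction in the $G^\sse{\gamma}$-equation gives $0\le G^\sse{\gamma}_t\le S^\ssup{2}_t(u_0\otimes v_0)$, whence $S^\ssup{2}_t(u_0\otimes u_0)\le H^\sse{\gamma}_t\le S^\ssup{2}_t(u_0\otimes u_0)+\tfrac1{|\rho|}S^\ssup{2}_t(u_0\otimes v_0)$, both uniformly in $\gamma$. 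Together with $\mathrm{SBM}(\rho,\gamma)\Rightarrow\mathrm{SBM}(\rho,\infty)$ and the uniform integrability from b), this lets me pass to the limit (for Lebesgue-a.e.\ $t$, then all $t>0$ by continuity): $G^\sse{\gamma}_t\to\E[\mu_t\otimes\nu_t]=:G_t$, $H^\sse{\gamma}_t\to\E[\mu_t\otimes\mu_t]=:H_t$, with $H_t-S^\ssup{2}_t(u_0\otimes u_0)=\tfrac1\rho(G_t-S^\ssup{2}_t(u_0\otimes v_0))$ still holding. It remains to identify $G_t$: for $t>0$ it has a continuous density which solves the heat equation on $\cS^2$ away from the diagonal (the correction in the $G$-equation is a potential of a measure concentrated on the diagonal) and vanishes on the diagonal by the separation-of-types property \eqref{eq:csep} (resp.\ \eqref{eq:dsep}); uniqueness for this Dirichlet heat problem forces $G_t=\tilde S_t(u_0\otimes v_0)$, which is \eqref{second mixed moment 2} after pairing with $\phi\otimes\psi$. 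Substituting into the exact identity gives $H_t=S^\ssup{2}_t(u_0\otimes u_0)+\tfrac1{|\rho|}(S^\ssup{2}_t-\tilde S_t)(u_0\otimes v_0)$; pairing with $\phi\otimes\phi$ and using $\langle\phi\otimes\phi,S^\ssup{2}_t(u_0\otimes u_0)\rangle_{\cS^2}=\langle\phi,S_tu_0\rangle_\cS^2$ yields the first line of \eqref{second moment 2}, and the same argument with $u_0,v_0$ exchanged the second.

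\emph{Main obstacle.} The genuinely hard step is the uniform-in-$\gamma$ $p$-th moment bound in b): while the threshold $\rho+\cos(\pi/p)<0$ and the wedge-exit heuristic behind it are classical for flat or complementary Heaviside initial conditions (through \cite{EF04,BDE11}, and on $\Z^d$ through \cite{DM11a,DM12}), extending the estimate to general tempered (resp.\ rapidly decreasing) initial data, and to non-integer $p$, requires real work, since $\mathrm{SBM}$ is not monotone in its initial condition so one cannot simply dominate. The remaining steps of a) — the Duhamel computations, the uniform bounds, the passage to the limit, and the parabolic regularity and Dirichlet uniqueness used to pin down $G_t$ — are comparatively routine.
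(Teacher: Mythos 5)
Your part b) is essentially the paper's argument: the paper proves the uniform-in-$\gamma$ bound in Lemma~\ref{lem:DDS2} and Corollary~\ref{cor:uniform p-th moment 2} and then applies Fatou, exactly as you propose, and the threshold $\rho+\cos(\pi/p)<0$ does come from \cite[Thm.~5.1]{BDE11}. The one correction is mechanical rather than substantive: the paper does not route the uniform bound through the moment dual, but through the Green function representation \eqref{MP1a dual}--\eqref{Cov1a dual} plus a Dubins--Schwarz time change, which turns $(\langle u^\sse{\gamma}_t,S_{T-t}\phi\rangle,\langle v^\sse{\gamma}_t,S_{T-t}\phi\rangle)$ into a time-changed $\rho$-correlated planar Brownian motion confined to a quadrant with corner at $(-\langle u_0,S_T\phi\rangle,-\langle v_0,S_T\phi\rangle)$; BDG then bounds the $p$-th moment by $\E[\tau^{p/2}]$ for the quadrant exit time. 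This handles general tempered/rapidly decreasing initial data and arbitrary real $p$ with no extra work, so the "main obstacle" you identify (non-monotonicity in the initial condition) simply does not arise.

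For part a) you take a genuinely different route. The paper obtains \eqref{second mixed moment 2} by passing to the limit in the explicit colored-particle moment duality of \cite{EF04}: the dual expression carries a weight $\exp$ of $-(1-\rho)\gamma$ times the collision local time of two independent motions, which converges monotonically to the indicator of not having hit the diagonal, i.e.\ directly to $\tilde S_t$; uniform integrability from b) with $p=2+\eps$ then transfers this to the limit process. (The formulas \eqref{second moment 2} are then read off either from the same duality or from the Green function representation together with the first-moment formula \eqref{first moment Lambda discrete equality} for $L$ in Proposition~\ref{prop:further_properties} -- the latter is exactly your "exact identity" between $H_t$ and $G_t$, which is correct.) Your alternative identification of $G_t=\tilde S_t(u_0\otimes v_0)$ via Duhamel plus Dirichlet uniqueness is the one place where your route demands substantially more than you concede as "routine": you must show that the limiting correction term is the space-time potential of a genuine locally finite measure concentrated on the diagonal with enough regularity that $G_t$ has a continuous density vanishing pointwise (not just in the $S_\eps$-smoothed sense of \eqref{eq:csep}) on the diagonal, and you then need uniqueness for the resulting Volterra-type equation obtained by restricting the potential to the diagonal. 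All of this is true and can be pushed through, but it is precisely the technical overhead that the paper's use of the moment duality avoids; if you want to keep your route, state and prove the uniqueness lemma for diagonal potentials explicitly rather than invoking "uniqueness for this Dirichlet heat problem". Finally, your suggested shortcut for $\cS=\Z^d$ via \cite[Thm.~1.2]{DM11a} does not directly give the stated generality, since their estimate is for total masses, $p=2+\eps$ and separated integrable initial conditions only.
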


\begin{rem} 
\begin{itemize}
\item[a)]
In the discrete-space case $\cS = \Z^d$, and for integrable initial conditions with disjoint support, the  mixed second moment formula in~\eqref{second mixed moment 2}
is already known, see~\cite[Thm.\ 1.2]{DM11a}. Moreover, they state estimate \eqref{finite p-moment} only for the total masses, i.e. $\phi\equiv1$, and for $p=2+\eps$. 
The key observation for the proof of Proposition \ref{prop:moments} (also due to \cite{DM11a}) is that 
if $p>2$ and $\rho+\cos(\pi/p)<0$, then $p$-th moments of the finite rate processes $\mathrm{SBM}(\rho,\gamma)$ are bounded uniformly in $\gamma>0$, see Corollary~\ref{cor:uniform p-th moment 2} below. 
\item[b)] 
Generalizing \eqref{second mixed moment 2}-\eqref{second moment 2}, an explicit expression for the moments of $\mathrm{SBM}(\rho,\infty)$ is in fact available for all \emph{integer} values $p=n>2$ such that $\rho+\cos(\pi/n)<0$, which is however much more involved than for second moments. See \cite{HOV15} where this is proved by establishing a new moment duality for $\mathrm{SBM}(\rho,\infty)$. 
\end{itemize}
\end{rem}

\begin{rem}\label{rem:rho=-1}
The reader will have noticed that in all our results, we have omitted the boundary case $\rho=-1$. 
The reason is that in both discrete and continuous space, 
different techniques are required, since we can no longer use the self-duality with respect to the function $F$ from \eqref{self-duality function}
to show uniqueness. 
Instead, if $\rho = -1$ and the initial conditions satisfy $u_0 + u_0 = 1$, then  one can use the 
the moment duality with a system of coalescing random walks resp. Brownian motions, 
see~\cite{T95}. However,  if $u_0+v_0 \ne1$  a new approach is needed
(as remarked in~\cite{DM11a}).
This challenge is taken up in \cite{HOV15} where we construct $\mathrm{SBM}(-1,\infty)$ for general initial conditions, using a new moment duality instead of the self duality to establish uniqueness. 
Using these techniques, one can show that  all results in this section, in particular the convergence in Theorem \ref{thm:conv}, continue to hold for $\rho=-1$ as well.
\end{rem}

The remaining paper is structured as follows: 
In Section~\ref{sec:discrete} we prove
Theorem~\ref{thm:MPinf_discrete} 
and Proposition~\ref{prop:moments}.
Then, in Section~\ref{sec:convergence}, we show our main result
Theorem~\ref{thm:conv}.
Finally, as an application of the convergence result of Theorem~\ref{thm:conv}, in Section \ref{sec:proof SPI} we prove Proposition~\ref{thm:SST}.

{\bf Notation:} We have collected some of the standard facts and notations
about measure-valued processes in Appendix~\ref{appendix0}. In Appendix~\ref{sec:semigroup}, 
we recall some standard results for the (killed) heat semigroup and in Appendix~\ref{sec:appendix_path}
we recall the Meyer-Zheng ``pseudo-path'' topology.
Throughout this paper, we will denote by $c,C$ generic constants whose
value may change from line to line. If the dependence on parameters
is essential we will indicate this correspondingly.

{\bf Acknowledgments:} 
This project received financial support
by the German Research Foundation (DFG)  within
the DFG Priority Programme 1590 `Probabilistic Structures in
Evolution', grants no.\ BL 1105/4-1 and OR 310/1-1. The authors would like to thank Florian V\"ollering for many helpful discussions on the topic of this work.

\section{Existence, uniqueness and properties of $\mathbf{SBM}(\rho,\infty)$}\label{sec:discrete}

In this section, we sketch the proof of Theorem \ref{thm:MPinf_discrete}, i.e.\ 
existence and uniqueness (subject to the separation-of-types property) of the solution to the martingale problem $(\mathbf{MP}_F(\cS))_{u_0,v_0}^\rho$, 
for $\cS\in\{\Z^d,\R\}$, $\rho\in(-1,0)$ and \emph{general} initial conditions (not necessarily mutually singular). 
The solution is given as the $\gamma\uparrow\infty$-limit in the Meyer-Zheng topology of the finite rate processes $\mathrm{SBM}(\rho,\gamma)$. 
The continuous space version of Theorem \ref{thm:MPinf_discrete} (apart from the strong Markov property) was already proved in \cite{BHO15}, and the discrete space case is very similar. 
In order to make this note self-contained and for the convenience of the reader, we restate the main steps of the proof in Section \ref{ssec:proof}.
This reminder also offers some guidance for Section~\ref{sec:convergence} below, where
we use a similar general strategy for the proof of the convergence of the discrete to the continuous model.
In Section \ref{ssec:further_properties}, we prove some additional properties of 
$\mathrm{SBM}(\rho,\infty)$ (in particular the moment results in Proposition \ref{prop:moments}), 
which will be used in Section~\ref{sec:convergence} below, 
and the continuous-space versions of which were not contained in our earlier paper~\cite{BHO15}.
Therefore, in Section \ref{ssec:further_properties} we provide somewhat more detailed proofs.

We begin with some preliminaries on the finite rate model. 
Let $\cS\in\{\Z^d,\R\}$. 
For initial conditions $( u_0, v_0 ) \in \calB^+_\rap(\cS)^2$ resp.\ $\calB^+_\tem(\cS)^2$, we denote by 
$( u_t^\sse{\gamma}, v_t^\sse{\gamma})_{t\ge0}$ 
the solution to $\mathrm{SBM}(\varrho,\gamma)_{u_0,  v_0}$ in discrete or continuous space with these initial conditions and finite branching rate $\gamma\in(0,\infty)$. 
Considering the solutions as measure-valued processes, we have $( u_t^\sse{\gamma}, v_t^\sse{\gamma})_{t\ge0}\in\calC_{[0,\infty)}(\calM_\rap(\cS))^2$ resp.\ $\calC_{[0,\infty)}(\calM_\tem(\cS))^2$.
Further, we define a continuous $\calM_\rap(\cS)$- resp. $\calM_\tem(\cS)$-valued increasing process $(L_t^\sse{\gamma})_{t\ge0}$ by
\be{defn:L^gamma}
L_t^\sse{\gamma}(dx):=\gamma\int_0^t  u_s^\sse{\gamma}(x) v_s^\sse{\gamma}(x) \, ds\,dx,\qquad t\ge0,\ 
\ee
where $dx$ denotes counting measure if $\cS=\Z^d$ and Lebesgue measure if $\cS=\R$.
We will also consider $L^\sse{\gamma}$ as a measure $L^\sse{\gamma}(ds,dx)$ on $\R^+\times\cS$ via $L^\sse{\gamma}([0,t]\times B):=L^\sse{\gamma}_t(B)$.
Recalling that $(S_t)_{t\ge0}$ denotes the heat semigroup and $\Delta$ the Laplacian, 
by the martingale problem formulation of the SPDE \eqref{eqn:spde} and the system of SDEs \eqref{eqn:sde_discrete} we have that
for all test functions $\phi,\psi\in\bigcup_{\lambda>0}\calC^\ssup{2}_{-\lambda}(\cS)$ (resp.\ $\bigcup_{\lambda>0}\calC^\ssup{2}_\lambda(\cS)$) 
\begin{align}\label{eq:discrete MP 1_finite}\bal
M^\sse{\gamma}_t(\phi) &:= \langle u^\sse{\gamma}_t, \phi \rangle_\cS - \langle u_0, \phi \rangle_\cS - \frac{1}{2}\int_0^t\langle u^\sse{\gamma}_s, {\Delta}\phi \rangle_\cS\, ds , \\
N^\sse{\gamma}_t(\psi) &:= \langle v^\sse{\gamma}_t, \psi \rangle_\cS - \langle v_0, \psi \rangle_\cS - \frac{1}{2}\int_0^t\langle v^\sse{\gamma}_s, {\Delta}\psi \rangle_\cS\, ds 
\eal\end{align}
are square-integrable martingales with quadratic (co-)variation
\begin{align}\bal\label{eq:discrete MP 1a_finite} 
{}  [ M^\sse{\gamma}(\phi),  M^\sse{\gamma}(\phi) ]_t  = [ N^\sse{\gamma}(\phi), N^\sse{\gamma}(\phi) ]_t &= \langle L^\sse{\gamma}_t,\phi^2\rangle_\cS, 
  \\
 [  M^\sse{\gamma}(\phi) , N^\sse{\gamma}(\psi) ]_t  & = \rho \, \langle L^\sse{\gamma}_t,\phi\psi\rangle_\cS
\eal\end{align}
for all $t>0$, with $L_t^\sse{\gamma}$ from \eqref{defn:L^gamma}.
Also, we have the \emph{Green function representation} for $\mathrm{SBM}(\varrho,\gamma)_{ u_0,  v_0}$, 
see e.g.~\cite[Thm.\ 2.2(b)(ii)]{DP98} 
or~\cite[Cor.\ A.4]{BHO15}: 
For every $T>0$ 
and $\phi,\psi\in\bigcup_{\lambda>0}\calC_{-\lambda}(\cS)$ (resp.\ $\bigcup_{\lambda>0}\calC_{\lambda}(\cS)$) we know that 
\begin{align}\label{MP1a dual}\bal
 M^\sse{\gamma,T}_t(\phi) &:= \<{ u^\sse{\gamma}_t, S_{T-t}\phi}_{\cS} - \<{ u_0, S_T\phi}_{\cS},\qquad t\in[0,T],\\
 N^\sse{\gamma,T}_t(\phi) &:= \<{ v^\sse{\gamma}_t, S_{T-t}\phi}_{\cS} - \<{ v_0, S_T\phi}_{\cS},\qquad t\in[0,T]
\eal\end{align}
are martingales on $[0,T]$ with quadratic (co-)variation
\begin{align}\label{Cov1a dual}
  &[ M^\sse{\gamma,T}(\phi),  M^\sse{\gamma,T}(\phi) ]_t \notag\\ &
  \hspace{1cm} = [ N^\sse{\gamma,T}(\phi), N^\sse{\gamma,T}(\phi) ]_t 
  = \int_{[0,t]\times\cS} \left({S}_{T-r}\phi(x)\right)^2\, L^\sse{\gamma}(dr,dx), 
  \\
& [  M^\sse{\gamma,T}(\phi) , N^\sse{\gamma,T}(\psi) ]_t   = \rho  \int_{[0,t]\times\cS} {S}_{T-r}\phi(x)\, {S}_{T-r}\psi(x)\, L^\sse{\gamma}(dr,dx). \notag
\end{align}

\subsection{Proof of Theorem \ref{thm:MPinf_discrete}}\label{ssec:proof}

As in \cite{BHO15}, the first step in the proof of Theorem \ref{thm:MPinf_discrete} is to show tightness of the family of finite rate models $\mathrm{SBM}(\rho,\gamma)$, $\gamma\in(0,\infty)$.

\begin{prop}\label{tightness_MZ} 
Suppose $\rho\in[-1,0)$ and $( u_0,  v_0 ) \in \calB^+_\rap(\cS)^2$ (resp.\ $\calB^+_\tem(\cS)^2$). 
Then the family of processes $\{(u_t^\sse{\gamma},v_t^\sse{\gamma},L_t^\sse{\gamma})_{t\ge0}: \gamma>0\}$ is tight with respect to the Meyer-Zheng topology on $D_{[0,\infty)}(\calM_\rap(\cS)^3)$ (resp.\ $D_{[0,\infty)}(\calM_\tem(\cS)^3)$).
\end{prop}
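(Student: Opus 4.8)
The plan is to verify the Meyer--Zheng tightness criterion: for a sequence of processes indexed by $\gamma$ taking values in a nice space, tightness in the pseudo-path topology follows once one controls, uniformly in $\gamma$, the ``conditional variation'' of suitable one-dimensional functionals, together with a compact-containment-type condition at the level of the state space. Concretely, I would fix a countable separating family of test functions $\phi \in \bigcup_{\lambda>0}\calC^\ssup{2}_{-\lambda}(\cS)$ (resp.\ $\calC^\ssup{2}_\lambda(\cS)$) and reduce tightness of $(u^\sse{\gamma},v^\sse{\gamma},L^\sse{\gamma})$ in $D_{[0,\infty)}(\calM_\rap(\cS)^3)$ (resp.\ $\calM_\tem$) to tightness of the real-valued processes $\big(\langle u^\sse{\gamma}_t,\phi\rangle_\cS,\langle v^\sse{\gamma}_t,\psi\rangle_\cS,\langle L^\sse{\gamma}_t,\phi\rangle_\cS\big)_{t\ge0}$ for each such $\phi,\psi$, appealing to the fact that convergence of all these coordinate functionals (plus a moment bound ensuring the relevant subsets of $\calM_\rap$ resp.\ $\calM_\tem$ are relatively compact) yields convergence in the measure space. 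This is exactly the strategy used in \cite{BHO15} for $\cS=\R$, and since all estimates below are driven only by the martingale problem \eqref{eq:discrete MP 1_finite}--\eqref{eq:discrete MP 1a_finite} and the Green function representation \eqref{MP1a dual}--\eqref{Cov1a dual}, which hold verbatim for $\cS=\Z^d$, the argument transfers.

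The key quantitative input is a first-moment bound uniform in $\gamma$. From \eqref{eqn:spde}/\eqref{eqn:sde_discrete} one has $\E_{u_0,v_0}[\langle u^\sse{\gamma}_t,\phi\rangle_\cS] = \langle u_0, S_t\phi\rangle_\cS$ and likewise for $v^\sse{\gamma}$, so the expected masses are bounded on compact time intervals by a quantity independent of $\gamma$ (using that $S_t$ preserves the tempered/rapidly-decreasing scales). For $L^\sse{\gamma}$, taking $\phi$ in \eqref{eq:discrete MP 1_finite} and using $\rho<0$ together with the self-duality/second-moment machinery — or more directly, testing \eqref{eq:discrete MP 1_finite} and \eqref{eq:discrete MP 1a_finite} with the constant-like function and exploiting the Green-function martingales — gives $\E_{u_0,v_0}[\langle L^\sse{\gamma}_t,\phi\rangle_\cS] \le C(t,\phi)$ uniformly in $\gamma$; the point is that because $\rho<0$ the branching is sub-critically correlated and the total ``collision local time'' mass cannot blow up. I would then verify the Meyer--Zheng condition in the form: for each $\phi$ and each $T>0$,
\[
\sup_{\gamma>0}\ \sup_{\mathcal{P}}\ \E_{u_0,v_0}\Big[\sum_{t_i\in\mathcal{P}} \big| \E[\, \langle u^\sse{\gamma}_{t_{i+1}},\phi\rangle_\cS - \langle u^\sse{\gamma}_{t_i},\phi\rangle_\cS \mid \mathcal{F}_{t_i}]\,\big|\Big] < \infty,
\]
where the supremum is over partitions $\mathcal{P}$ of $[0,T]$; by \eqref{eq:discrete MP 1_finite} the conditional increment equals $\E[\int_{t_i}^{t_{i+1}}\tfrac12\langle u^\sse{\gamma}_s,\Delta\phi\rangle_\cS\,ds\mid\mathcal{F}_{t_i}]$, so the sum telescopes in absolute value into $\int_0^T \tfrac12 \E[|\langle u^\sse{\gamma}_s,\Delta\phi\rangle_\cS|]\,ds$, which is bounded using the first-moment formula with $|\Delta\phi|$ in place of $\phi$ (and $\Delta\phi$ is again in the right function class). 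The same works for $v^\sse{\gamma}$, and for $L^\sse{\gamma}$ the process is already increasing so its conditional variation is just $\E[\langle L^\sse{\gamma}_T,\phi\rangle_\cS]$, handled by the uniform bound above. Combined with the moment bound supplying relative compactness of the occupied subsets of $\calM_\rap(\cS)^3$ (resp.\ $\calM_\tem(\cS)^3$) — here one uses that $\E[\langle u^\sse{\gamma}_t,\phi_\lambda\rangle]$ is controlled for a fixed reference weight $\phi_\lambda$, giving tightness of the one-dimensional marginals as measures — the Meyer--Zheng theorem yields tightness of the full triple in the pseudo-path topology.

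I expect the main obstacle to be the bookkeeping in the measure-valued (as opposed to real-valued) version of the Meyer--Zheng criterion: one must ensure that control of countably many coordinate functionals $\langle \cdot,\phi\rangle_\cS$ together with a uniform moment bound genuinely upgrades to tightness in $D_{[0,\infty)}(\calM_\rap(\cS)^3)$ with the pseudo-path topology on the measure space, which requires checking that the relevant balls in $\calM_\rap$ (resp.\ $\calM_\tem$) are compact in the vague-type topology and that the evaluation maps are continuous — a point that is essentially routine given the appendix on tempered/rapidly-decreasing measures, but needs care. The genuinely probabilistic content (the uniform-in-$\gamma$ first-moment bounds, and especially the bound on $\E[\langle L^\sse{\gamma}_t,\phi\rangle_\cS]$, which is where $\rho<0$ enters) is comparatively soft once the Green-function martingale representation \eqref{MP1a dual}--\eqref{Cov1a dual} is in hand, and is identical in form to the $\cS=\R$ case already treated in \cite{BHO15}.
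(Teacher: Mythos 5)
Your overall strategy matches the paper's: derive uniform-in-$\gamma$ moment bounds from the Green function representation, verify the Meyer--Zheng conditional-variation condition for the coordinate processes $\langle u^\sse{\gamma},\phi\rangle,\langle v^\sse{\gamma},\phi\rangle,\langle L^\sse{\gamma},\phi\rangle$, and combine with compact containment via the Kurtz version of the criterion. Your direct treatment of the conditional variation of $\langle u^\sse{\gamma},\phi\rangle$ (bounding it by $\tfrac12\int_0^T\E[\langle u^\sse{\gamma}_s,|\Delta\phi|\rangle]\,ds$) and the observation that for the increasing process $L^\sse{\gamma}$ the conditional variation is just its expected terminal value are both correct and actually a bit leaner than what the paper sketches via Kolmogorov's criterion on the drift term. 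Your identification of where $\rho<0$ enters — giving a $\gamma$-uniform bound on the expected collision mass $\E[\langle L^\sse{\gamma}_T,\phi\rangle]$ by nonnegativity of the mixed second moment together with the Green-function martingale covariation — is exactly the content of the paper's Lemma~\ref{lemma boundedness quadratic variation}, modulo the needed uniform preservation estimate \eqref{estimate 1a} (resp.\ \eqref{eq:uniform preservation}) to pass from the weight $S_{T-r}\phi$ to $\phi$ itself; you gesture at that but should state it.

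However, there is a genuine gap in the compact containment step. You assert that the pointwise first-moment bound $\E[\langle u^\sse{\gamma}_t,\phi_\lambda\rangle]=\langle u_0,S_t\phi_\lambda\rangle$ gives ``tightness of the one-dimensional marginals'' and hence relative compactness of the occupied subsets of $\calM_\tem(\cS)^3$. That is not enough: the compact containment condition required by Kurtz's criterion is a statement about the \emph{supremum over $t\in[0,T]$}, namely one needs
\[
\sup_{\gamma>0}\,\E_{u_0,v_0}\Big[\sup_{0\le t\le T}\langle u^\sse{\gamma}_t,\phi\rangle^2\Big]<\infty
\]
(and similarly for $v^\sse{\gamma}$ and $L^\sse{\gamma}$), as in the paper's \eqref{uniform first moments dual process 1}--\eqref{uniform first moments dual process 2}. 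Pointwise first-moment control of $\langle u^\sse{\gamma}_t,\phi_\lambda\rangle$ says nothing about the excursions of the martingale part $M^{\sse{\gamma},T}_t(\phi)$ in \eqref{MP1a dual}, which could a priori produce spikes making $\sup_t$ unbounded as $\gamma\to\infty$. The missing ingredient is the Burkholder--Davis--Gundy (or Doob) inequality applied to the Green-function martingale: its quadratic variation at time $T$ is precisely $\int_{[0,T]\times\cS}(S_{T-r}\phi)^2\,L^\sse{\gamma}(dr,dx)$, which is exactly the quantity you bound uniformly in $\gamma$ via the $\rho<0$ argument; BDG then converts this into $\E[\sup_{t\le T}|M^{\sse{\gamma},T}_t(\phi)|^2]\le C$ uniformly in $\gamma$, and together with the uniform preservation estimate this yields the needed $\sup_t$ moment bound. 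Without this step the compact containment half of the criterion is not established.
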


The key step in the proof of the Meyer-Zheng tightness is the following lemma (corresponding to \cite[Lemma 3.1]{BHO15}) which relies crucially on the colored particle moment duality for finite rate symbiotic branching, 
see \cite[Prop.\ 9]{EF04} for the discrete case and \cite[Prop.\ 12]{EF04} for the continuous case.
The estimate shows that \eqref{Cov1a dual} is bounded in expectation, uniformly in $\gamma>0$. 
Recall that $(S^\ssup{2}_t)_{t\ge0}$ and $(\tilde S_t)_{t\ge0}$ denote the
heat semigroup on $\cS^2$ and the killed semigroup defined in \eqref{eq: tilde S discrete}, respectively.

\begin{lemma}\label{lemma boundedness quadratic variation}
Suppose 
$\rho\in[-1,0)$ and $( u_0,  v_0 ) \in \calB^+_\rap(\cS)^2$ (resp.\ $\calB^+_\tem(\cS)^2$). Then for all $t>0$ 
and test functions $\phi,\psi\in \bigcup_{\lambda>0}\calB_{-\lambda}^+(\cS)$ (resp.\ $\bigcup_{\lambda>0}\calB_\lambda^+(\cS)$)
we have monotone convergence as $\gamma\uparrow\infty$
\begin{align}\bal\label{Cov2 dual}
\E_{ u_0, v_0}\left[ \int_{[0,t]\times\cS} {S}_{t-r}\phi(x)\, {S}_{t-r}\psi(x)\, L^\sse{\gamma}(dr,dx)\right]
&\uparrow\frac{1}{|\rho|} \left\langle \phi\otimes\psi, ({S}_t^\ssup{2} - {\tilde S}_t)(u_0\otimes v_0)\right\rangle_{\cS^2}\\
&\le \frac{1}{ |\rho|}\langle \phi,{S}_tu_0\rangle_{\cS}\,\langle \psi, {S}_t v_0\rangle_{\cS}<\infty.
\eal\end{align}
\end{lemma}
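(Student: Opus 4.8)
The plan is to express the left-hand side of \eqref{Cov2 dual} in terms of the colored particle moment duality from \cite[Prop.~9 and Prop.~12]{EF04} and then pass to the limit $\gamma\uparrow\infty$ using monotone convergence. First I would observe that by \eqref{Cov1a dual} (with $T=t$), the left-hand side equals
\[
\E_{u_0,v_0}\Big[[M^\sse{\gamma,t}(\phi),N^\sse{\gamma,t}(\psi)]_t\Big]\cdot\frac{1}{\rho}
\;=\;\frac{1}{\rho}\,\E_{u_0,v_0}\big[M^\sse{\gamma,t}_t(\phi)\,N^\sse{\gamma,t}_t(\psi)\big],
\]
since $M^\sse{\gamma,t}_0=N^\sse{\gamma,t}_0=0$. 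Expanding $M^\sse{\gamma,t}_t(\phi)=\langle u^\sse{\gamma}_t,\phi\rangle_\cS-\langle u_0,S_t\phi\rangle_\cS$ and similarly for $N$, and using that $\E_{u_0,v_0}[\langle u^\sse{\gamma}_t,\phi\rangle_\cS]=\langle u_0,S_t\phi\rangle_\cS$ by the first-moment formula (take expectations in \eqref{eq:discrete MP 1_finite}), the cross terms collapse and one gets
\[
\E_{u_0,v_0}\big[M^\sse{\gamma,t}_t(\phi)\,N^\sse{\gamma,t}_t(\psi)\big]
=\E_{u_0,v_0}\big[\langle u^\sse{\gamma}_t,\phi\rangle_\cS\langle v^\sse{\gamma}_t,\psi\rangle_\cS\big]-\langle u_0,S_t\phi\rangle_\cS\langle v_0,S_t\psi\rangle_\cS.
\]
So the entire statement reduces to computing the mixed second moment $\E_{u_0,v_0}[\langle u^\sse{\gamma}_t,\phi\rangle_\cS\langle v^\sse{\gamma}_t,\psi\rangle_\cS]$ and showing it decreases monotonically in $\gamma$ to $\langle\phi\otimes\psi,\tilde S_t(u_0\otimes v_0)\rangle_{\cS^2}$.

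Next I would invoke the moment duality for $\mathrm{SBM}(\rho,\gamma)$: the mixed second moment is dual to a system of two particles (one of each color) performing independent random walks (resp.\ Brownian motions), which upon meeting collide and contribute a factor governed by $\rho$. Concretely, by \cite[Prop.~9/12]{EF04} one has
\[
\E_{u_0,v_0}\big[\langle u^\sse{\gamma}_t,\phi\rangle_\cS\langle v^\sse{\gamma}_t,\psi\rangle_\cS\big]
=\E_{x,y}\Big[\phi(X^\ssup{1}_t)\psi(X^\ssup{2}_t)\,e^{\rho\,\gamma\int_0^t \1_{\{X^\ssup{1}_s=X^\ssup{2}_s\}}\,ds}\Big]
\]
integrated against $u_0(dx)v_0(dy)$ in the discrete case, with the analogous local-time expression $e^{\rho\gamma L_t(X^\ssup{1}-X^\ssup{2})}$ in the continuous case. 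Since $\rho<0$, the exponential factor is bounded by $1$ and is decreasing in $\gamma$; as $\gamma\uparrow\infty$ it converges monotonically to $\1_{\{\text{the two particles never meet before }t\}}=\1_{\{t<\tau^{1,2}\}}$. Monotone convergence then yields the limit $\E_{x,y}[\phi(X^\ssup{1}_t)\psi(X^\ssup{2}_t)\1_{\{t<\tau^{1,2}\}}]=\tilde S_t(\phi\otimes\psi)(x,y)$, and integrating against $u_0\otimes v_0$ and using symmetry of $\tilde S_t$ gives $\langle\phi\otimes\psi,\tilde S_t(u_0\otimes v_0)\rangle_{\cS^2}$. Combining with the reduction above and noting $\langle u_0,S_t\phi\rangle_\cS\langle v_0,S_t\psi\rangle_\cS=\langle\phi\otimes\psi,S^\ssup{2}_t(u_0\otimes v_0)\rangle_{\cS^2}$ produces exactly $\tfrac{1}{|\rho|}\langle\phi\otimes\psi,(S^\ssup{2}_t-\tilde S_t)(u_0\otimes v_0)\rangle_{\cS^2}$ (the sign from $1/\rho=-1/|\rho|$ flips the order of the two terms). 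Monotonicity in $\gamma$ of the whole left-hand side is inherited directly from monotonicity of the dual exponential. Finally, the asserted upper bound follows since $S^\ssup{2}_t-\tilde S_t\le S^\ssup{2}_t$ as a positive operator (killing only removes mass), so $\langle\phi\otimes\psi,(S^\ssup{2}_t-\tilde S_t)(u_0\otimes v_0)\rangle\le\langle\phi\otimes\psi,S^\ssup{2}_t(u_0\otimes v_0)\rangle=\langle\phi,S_tu_0\rangle_\cS\langle\psi,S_tv_0\rangle_\cS$, which is finite because $u_0,v_0$ are tempered (resp.\ rapidly decreasing) and $\phi,\psi$ lie in the dual scale.

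The main obstacle I anticipate is not the limiting argument itself but making the duality identity rigorous for the relevant unbounded test functions and initial data: \cite[Prop.~9/12]{EF04} is typically stated for bounded (or rapidly decreasing) $\phi,\psi$, whereas here we need $\phi,\psi\in\bigcup_{\lambda>0}\calB^+_{-\lambda}(\cS)$ paired with tempered $u_0,v_0$ (or the dual pairing). Handling this requires an approximation/truncation argument together with the uniform-in-$\gamma$ moment bounds — precisely the finite upper bound established here serves as the a priori control that lets one justify the interchange of limits and the extension to the full test-function class. In the continuous case one also needs the regularity of $\tilde S_t$, namely that it has the transition density $\tilde p_t$ of \eqref{transition density tilde S} vanishing on the diagonal, to identify the limit with $\tilde S_t(u_0\otimes v_0)$ and to confirm finiteness; this is recorded in the appendix and can be quoted. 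Apart from these technical points, the argument is a direct consequence of the sign of $\rho$ and monotone convergence.
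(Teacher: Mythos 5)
Your proof is correct and follows essentially the same route as the paper's (the paper defers to \cite[Lemma 3.1]{BHO15}, which likewise reduces to the mixed second moment via the Green function representation and then applies the colored particle moment duality of \cite{EF04}, with the sign of $\rho$ yielding monotonicity in $\gamma$). You have also correctly flagged the one genuine technical point — extending the duality identity to the unbounded test functions in $\bigcup_{\lambda>0}\calB^+_{-\lambda}(\cS)$ via truncation and the a priori bound — which is precisely what needs care in the full write-up.
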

For a proof in the continuous case $\cS=\R$, see \cite[Lemma 3.1]{BHO15}. 
The proof for the discrete case $\cS=\Z^d$ is virtually identical, replacing the Brownian motions by simple symmetric random walks and the corresponding local times. 
Note that in view of the definition of the semigroup $(\tilde S_t)_{t\ge0}$, the limit in \eqref{Cov2 dual} coincides indeed with $(35)$ in \cite{BHO15}. 

We now give a brief sketch of the proof of Proposition~\ref{tightness_MZ}.
In a different but similar setting, in Section~\ref{sec:convergence}, we will carry out the full details.

\begin{proof}[Sketch of the proof of Proposition~\ref{tightness_MZ}]
First use the Green function representation \eqref{MP1a dual}-\eqref{Cov1a dual} combined with the lower bound from \eqref{eq:uniform preservation} (for $n=1$), the Burkholder-Davis-Gundy inequality and the upper bound \eqref{Cov2 dual} to derive uniform moment estimates
\begin{equation}\label{uniform first moments dual process 1} 
\sup_{\gamma>0} \E_{ u_0, v_0} \Big[ \sup_{0\leq t \leq T}  \<{ u^\sse{\gamma}_t, \phi}_\cS^2 \Big] < \infty,\quad \sup_{\gamma>0} \E_{ u_0, v_0} \Big[ \sup_{0\leq t \leq T}  \<{ v^\sse{\gamma}_t, \phi}_\cS^2 \Big] < \infty, 
\end{equation}
\begin{equation}\label{uniform first moments dual process 2}
 \sup_{\gamma>0} \E_{ u_0, v_0} \Big[ \sup_{0\leq t \leq T} \big\langle L^\sse{\gamma}_t, \phi\big\rangle_\cS \Big] < \infty.
\end{equation}
Compare also the proof of Lemma \ref{lemma tightness} below for the strategy how to derive these estimates 
from Lemma~\ref{lemma boundedness quadratic variation}.
As in \cite[Prop.\ 3.3]{BHO15}, these estimates in turn imply the compact containment condition for the family of processes $\{( u^\sse{\gamma}_t, v^\sse{\gamma}_t,L^\sse{\gamma}_t)_{t\ge0}:\gamma>0$\}. 
Tightness in the Meyer-Zheng topology is then proved similarly to Proposition~\ref{prop:tightness} below, using the martingale problem formulation of $\mathrm{SBM}(\rho,\gamma)$ together with the bounds \eqref{uniform first moments dual process 1}-\eqref{uniform first moments dual process 2}. 
\end{proof}

Next, one has to check that limit points of the family $\{(u_t^\sse{\gamma},v_t^\sse{\gamma},L_t^\sse{\gamma})_{t\ge0}: \gamma>0\}$ solve the martingale problem $(\mathbf{MP}_F(\cS))_{u_0,v_0}^\rho$
and satisfy the separation-of-types property for positive times. The following corresponds to \cite[Prop.\ 4.3]{BHO15}:

\begin{prop}\label{MPinf 2}
Let 
$\rho\in[-1,0)$ and $( u_0, v_0)\in(\calB^+_\rap(\cS))^2$ (resp.\ $(\calB^+_\tem(\cS))^2$).
Suppose that $(u_t,v_t,L_t)_{t\ge0}\in D_{[0,\infty)}(\calM_{\rap}(\cS)^3)$ (resp.\ $D_{[0,\infty)}(\calM_{\tem}(\cS)^3)$) is any limit point with respect to the Meyer-Zheng topology of the family $\{(u^\sse{\gamma}_t,v^\sse{\gamma}_t,L^\sse{\gamma}_t)_{t\ge0}: \gamma>0\}$. 
Then for all test functions $\phi,\psi\in\calC_\tem^\ssup{2}(\cS)^+$ (resp.\ $\calC_\rap^\ssup{2}(\cS)^+$), the process
\begin{align}\label{MP8}
\bal 
\tilde M_t(\phi,\psi)&:= F(u_t, v_t,\phi,\psi) - F(u_0,v_0,\phi,\psi)\\
&\quad - \frac{1}{2}\int_0^t F(u_s, v_s,\phi,\psi)\,\langle\langle u_s, v_s, {\Delta}\phi, {\Delta}\psi\rangle\rangle_\rho \, ds \\
& \quad - 4(1-\rho^2)\int_{[0,t]\times\cS} F(u_s,v_s,\phi,\psi)\,\phi(x)\psi(x)\,L(ds,dx)
\eal
\end{align}
is a martingale,
and the process $(L_t)_{t\ge0}$ satisfies the requirements of Definition \ref{defn:MP'}.
In particular, $(u_t,v_t)_{t\ge0}$ solves the martingale problem $(\mathbf{MP}_F(\cS))_{ u_0, v_0}^\rho$. 
\end{prop}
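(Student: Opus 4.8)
The plan is to identify any Meyer--Zheng limit point as a solution of $(\mathbf{MP}_F(\cS))^\rho_{u_0,v_0}$ by passing to the limit $\gamma\uparrow\infty$ in the corresponding \emph{finite-rate} martingale identity; this mirrors \cite[Prop.~4.3]{BHO15} and treats $\cS\in\{\Z^d,\R\}$ simultaneously. First I would record the finite-rate identity: for each fixed $\gamma<\infty$, applying It\^o's formula to the exponential $F(u^\sse{\gamma}_t,v^\sse{\gamma}_t,\phi,\psi)=\exp\langle\langle u^\sse{\gamma}_t,v^\sse{\gamma}_t,\phi,\psi\rangle\rangle_\rho$ and using the martingale problem \eqref{eq:discrete MP 1_finite}--\eqref{eq:discrete MP 1a_finite} shows that $\tilde M^\sse{\gamma}_t(\phi,\psi)$, defined as in \eqref{MP8} with $(u^\sse{\gamma},v^\sse{\gamma},L^\sse{\gamma})$ replacing $(u,v,L)$, is a (genuine) martingale: the drift terms collapse into the $\langle\langle\,\cdot\,,\Delta\phi,\Delta\psi\rangle\rangle_\rho$-term, while the quadratic-variation contribution \eqref{eq:discrete MP 1a_finite}, evaluated on the complex coefficients occurring in $\langle\langle\cdot\rangle\rangle_\rho$, produces exactly the coefficient $4(1-\rho^2)\phi(x)\psi(x)$ against $L^\sse{\gamma}(ds,dx)$. (This is the discrete analogue of \cite[Prop.~A.5]{BHO15}; integrability is immediate from $|F|\le1$ on nonnegative test functions together with \eqref{uniform first moments dual process 1}--\eqref{uniform first moments dual process 2}.)

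Next I would use the Skorokhod representation theorem to work on a probability space where $(u^\sse{\gamma},v^\sse{\gamma},L^\sse{\gamma})\to(u,v,L)$ almost surely in the Meyer--Zheng topology along the subsequence realizing the given limit point. A standard feature of the pseudo-path topology then supplies a co-null set $T_g\subseteq[0,\infty)$ of ``good times'' with $(u^\sse{\gamma}_t,v^\sse{\gamma}_t,L^\sse{\gamma}_t)\to(u_t,v_t,L_t)$ in probability for $t\in T_g$; combined with the uniform second-moment bounds \eqref{uniform first moments dual process 1}--\eqref{uniform first moments dual process 2} (and linearity in the test function) this upgrades to $L^1(\p)$-convergence of $\langle u^\sse{\gamma}_t,\chi\rangle$, $\langle v^\sse{\gamma}_t,\chi\rangle$ and $\langle L^\sse{\gamma}_t,\chi\rangle$ for good $t$ and any admissible $\chi$, and supplies the uniform integrability needed in the passage to the limit.

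Fixing good times $0\le s_1<\dots<s_m\le s<t$ in $T_g$ and bounded continuous $h_1,\dots,h_m$ on $\cM(\cS)^3$, the finite-rate identity gives $\E\big[(\tilde M^\sse{\gamma}_t(\phi,\psi)-\tilde M^\sse{\gamma}_s(\phi,\psi))\prod_i h_i(u^\sse{\gamma}_{s_i},v^\sse{\gamma}_{s_i},L^\sse{\gamma}_{s_i})\big]=0$, and I would pass to the limit term by term. The $h_i$-factors converge a.s.\ and are bounded; the boundary terms converge a.s.\ because $F$ is continuous with $|F|\le1$; the time-integral term converges in $L^1(\p)$ by Fubini, a.e.-$r$ convergence in probability of the (continuous, and $L^2(\p)$-bounded by \eqref{uniform first moments dual process 1}) integrand, and dominated convergence. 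The main obstacle is the term
\[
\int_{[0,t]\times\cS}F(u^\sse{\gamma}_r,v^\sse{\gamma}_r,\phi,\psi)\,\phi(x)\psi(x)\,L^\sse{\gamma}(dr,dx)\ \longrightarrow\ \int_{[0,t]\times\cS}F(u_r,v_r,\phi,\psi)\,\phi(x)\psi(x)\,L(dr,dx),
\]
since the integrand converges only in the weak ``good-times'' sense while the integrator $L^\sse{\gamma}$ concentrates singularly, so naive Stieltjes stability fails. I would treat this, as in \cite[Prop.~4.3]{BHO15}, by first showing — from $L^\sse{\gamma}_r\to L_r$ for a.e.\ $r$ together with monotonicity of $r\mapsto\langle L^\sse{\gamma}_r,\chi\rangle$ — that $\int_{[0,t]\times\cS}g(r,x)\,L^\sse{\gamma}(dr,dx)\to\int_{[0,t]\times\cS}g(r,x)\,L(dr,dx)$ for $g$ jointly continuous and sufficiently regular in $r$, and then replacing $F(u^\sse{\gamma}_r,v^\sse{\gamma}_r,\phi,\psi)$ by a time-mollified version, the error being controlled by the uniform mass bound \eqref{uniform first moments dual process 2}; if preferred, one may additionally average the displayed identity over $s$ and $t$ so as to reduce everything to time integrals before taking the limit.

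Since the resulting identity $\E[(\tilde M_t(\phi,\psi)-\tilde M_s(\phi,\psi))\prod_i h_i(\cdot)]=0$ holds for all good times $s<t$, and $t\mapsto\tilde M_t(\phi,\psi)$ is c\`adl\`ag ($(u,v,L)$ is c\`adl\`ag, $F$ is continuous, and the two integral terms are continuous in $t$), right-continuity extends the identity to all $s<t$ and to the full natural filtration, so $\tilde M_t(\phi,\psi)$ is a martingale. Finally $(L_t)_{t\ge0}$ inherits from $(L^\sse{\gamma})$ that it is c\`adl\`ag, increasing, $\cM_\rap(\cS)$- (resp.\ $\cM_\tem(\cS)$-)valued with $L_0=0$, while \eqref{finiteness Lambda 1} follows from \eqref{Cov2 dual} (equivalently \eqref{uniform first moments dual process 2}) and Fatou's lemma; hence $(u_t,v_t)_{t\ge0}$ solves $(\mathbf{MP}_F(\cS))^\rho_{u_0,v_0}$, as claimed.
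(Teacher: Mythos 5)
Your proposal is correct and takes essentially the same route as the paper, whose own (two-line) proof consists of exactly the two ingredients you supply: the finite-rate analogue of \eqref{MP8} obtained from It\^o's formula applied to $F$ together with \eqref{eq:discrete MP 1_finite}--\eqref{eq:discrete MP 1a_finite} (the discrete counterpart of \cite[Prop.~A.5]{BHO15}), followed by passage to the Meyer--Zheng limit $\gamma\uparrow\infty$, with the details delegated to \cite[Prop.~4.3]{BHO15} and to the proof of Proposition~\ref{prop:limit_points_MP}. The only implementation difference is the final step --- you verify the martingale identity directly against cylinder functions at Meyer--Zheng ``good times'' and extend by right-continuity, whereas the paper's template (Proposition~\ref{prop:limit_points_MP}) establishes Meyer--Zheng convergence of the prelimit martingales plus uniform second-moment bounds (via \eqref{qv2} and \eqref{first moment Lambda discrete equality}) and then invokes \cite[Thm.~11]{MZ84}, which internally runs the same good-times argument --- and you correctly isolate the convergence of $\int F\,\phi\psi\,L^\sse{\gamma}(ds,dx)$ as the one genuinely delicate term, which the paper itself handles only by reference to \cite{BHO15}.
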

As in the proof of \cite[Prop.\ 4.3]{BHO15}, this follows from the fact \eqref{MP8} 
holds for the finite rate model $\mathrm{SBM}(\rho,\gamma)$, by taking the limit $\gamma\to\infty$. 
Compare also the proof of Proposition \ref{prop:limit_points_MP} below.

The next lemma gives the crucial bound on mixed second moments of limit points, from which the separation-of-types property can be derived as in \cite{BHO15}.

\begin{lemma}[Moment bounds]
\label{lem:moments 2}
Let $\rho\in[-1,0)$ and $( u_0, v_0)\in(\calB^+_\rap(\cS))^2$ (resp.\ $(\calB^+_\tem(\cS))^2$).
Suppose that 
$(u_t,v_t)_{t\ge0}\in D_{[0,\infty)}(\calM_{\rap}(\cS)^2)$ (resp.\ $D_{[0,\infty)}(\calM_{\tem}(\cS)^2)$) is any limit point with respect to the Meyer-Zheng topology of the family 
$\{(u^\sse{\gamma}_t,v^\sse{\gamma}_t)_{t\ge0}:\gamma>0\}$.
Then we have for all $\phi,\psi\in\bigcup_{\lambda>0}\calC_{-\lambda}^+(\cS)$ (resp.\ $\bigcup_{\lambda>0}\calC_\lambda^+(\cS)$) that
\begin{align}\label{second mixed moment discrete}\bal
\E_{u_0,v_0}\left[\langle u_t,\phi\rangle_{\cS}\,\langle v_t,\psi\rangle_{\cS}\right] &\le\left\langle \phi\otimes\psi, {\tilde S}_t(u_0\otimes v_0)\right\rangle_{\cS^2}.
\eal\end{align}
\end{lemma}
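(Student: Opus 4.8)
The plan is to obtain the bound \eqref{second mixed moment discrete} by passing to the limit $\gamma\to\infty$ in the corresponding exact second moment formula for the finite rate model $\mathrm{SBM}(\rho,\gamma)$. First I would recall (from the colored particle moment duality of \cite[Prop.~9, Prop.~12]{EF04}, or equivalently by a direct It\^o computation using \eqref{eq:discrete MP 1_finite}--\eqref{eq:discrete MP 1a_finite}) that for the finite rate model one has the \emph{exact} identity
\begin{equation*}
\E_{u_0,v_0}\left[\langle u_t^\sse{\gamma},\phi\rangle_{\cS}\,\langle v_t^\sse{\gamma},\psi\rangle_{\cS}\right]
= \langle \phi\otimes\psi, S_t^\ssup{2}(u_0\otimes v_0)\rangle_{\cS^2}
- 2(1+\rho)\,\E_{u_0,v_0}\!\left[\int_{[0,t]\times\cS} S_{t-r}\phi(x)\,S_{t-r}\psi(x)\,L^\sse{\gamma}(dr,dx)\right],
\end{equation*}
which one checks by applying It\^o's formula to the product of the two time-reversed martingale representations $M^\sse{\gamma,t}_t(\phi)$, $N^\sse{\gamma,t}_t(\psi)$ from \eqref{MP1a dual}--\eqref{Cov1a dual}. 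The point is that the second term on the right is nonnegative (since $\rho\in[-1,0)$ so $1+\rho\ge0$, $\phi,\psi\ge0$, and $L^\sse{\gamma}$ is a positive measure), and by Lemma~\ref{lemma boundedness quadratic variation} it converges monotonically to $\tfrac{1+\rho}{2|\rho|}\cdot 2\langle\phi\otimes\psi,(S_t^\ssup{2}-\tilde S_t)(u_0\otimes v_0)\rangle_{\cS^2}$; substituting and simplifying using $1+\rho = 1-|\rho|$ yields exactly the claimed limiting formula \eqref{second mixed moment 2}, namely $\langle\phi\otimes\psi,\tilde S_t(u_0\otimes v_0)\rangle_{\cS^2}$, and in particular an \emph{upper} bound uniform in $\gamma$.

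Next I would transfer this bound to an arbitrary Meyer-Zheng limit point $(u_t,v_t)_{t\ge0}$. Since convergence in the Meyer-Zheng pseudo-path topology gives convergence of one-dimensional distributions along a subsequence at all but countably many times $t$, and since the maps $(\mu,\nu)\mapsto\langle\mu,\phi\rangle_\cS\langle\nu,\psi\rangle_\cS$ are continuous but unbounded, I would first truncate: for $K>0$ the functional $(\mu,\nu)\mapsto(\langle\mu,\phi\rangle_\cS\wedge K)(\langle\nu,\psi\rangle_\cS\wedge K)$ is bounded and continuous, so Portmanteau gives
\begin{equation*}
\E_{u_0,v_0}\big[(\langle u_t,\phi\rangle_\cS\wedge K)(\langle v_t,\psi\rangle_\cS\wedge K)\big]
\le \liminf_{\gamma\to\infty}\E_{u_0,v_0}\big[\langle u_t^\sse{\gamma},\phi\rangle_\cS\langle v_t^\sse{\gamma},\psi\rangle_\cS\big]
\le \langle\phi\otimes\psi,\tilde S_t(u_0\otimes v_0)\rangle_{\cS^2}
\end{equation*}
for a.e.\ $t$; letting $K\to\infty$ via monotone convergence gives \eqref{second mixed moment discrete} for a.e.\ $t>0$. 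To upgrade this to \emph{all} $t>0$ I would use right-continuity of $(u_t,v_t)_{t\ge0}$ together with Fatou's lemma along a sequence $t_m\downarrow t$ of ``good'' times, noting that $\liminf_m\langle\phi\otimes\psi,\tilde S_{t_m}(u_0\otimes v_0)\rangle_{\cS^2}=\langle\phi\otimes\psi,\tilde S_t(u_0\otimes v_0)\rangle_{\cS^2}$ by continuity of $s\mapsto \tilde S_s m$ for the relevant (tempered or rapidly decreasing) measures.

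I expect the main obstacle to be the lack of uniform integrability of the products $\langle u_t^\sse{\gamma},\phi\rangle_\cS\langle v_t^\sse{\gamma},\psi\rangle_\cS$ under the weak Meyer-Zheng convergence, which is exactly why one cannot simply take limits of the second moment formula directly and must argue via the truncation/Portmanteau route above. A secondary technical point is that the identity in the first step is cleanest for test functions in $\bigcup_{\lambda}\calC_{-\lambda}$ (resp.\ $\bigcup_\lambda\calC_\lambda$), which is precisely the class in the statement, so no extra approximation of test functions is needed — but one should make sure the time-reversed Green function martingales \eqref{MP1a dual} really are martingales for these (merely tempered/rapidly decreasing, not necessarily smooth) functions, which is covered by the Green function representation \cite[Thm.~2.2]{DP98}, \cite[Cor.~A.4]{BHO15} cited above. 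Everything else is routine: the positivity of the correction term, the monotone convergence in Lemma~\ref{lemma boundedness quadratic variation}, and the bookkeeping with $1+\rho=1-|\rho|$.
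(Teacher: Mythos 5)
Your overall route is the same as the paper's (which defers to \cite[Lemma 4.4]{BHO15}): establish the exact finite-rate mixed second moment identity, use the monotone convergence of Lemma~\ref{lemma boundedness quadratic variation} to identify its $\gamma\to\infty$ limit as $\langle\phi\otimes\psi,\tilde S_t(u_0\otimes v_0)\rangle_{\cS^2}$, and then transfer an upper bound to Meyer--Zheng limit points via a truncation/Fatou argument at a.e.\ $t$, upgraded to all $t$ by right-continuity. Deriving the finite-rate identity by It\^o's formula on the Green function martingales \eqref{MP1a dual} rather than quoting the colored-particle duality is a legitimate and essentially equivalent variant, and your truncation/Portmanteau step is interchangeable with the paper's Skorokhod-representation-plus-Fatou step.

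However, your displayed finite-rate identity is wrong, and the error propagates into your ``simplification.'' By \eqref{Cov1a dual} the covariation of the two Green function martingales with $T=t$ is
\begin{equation*}
\big[M^{\sse{\gamma,t}}(\phi),N^{\sse{\gamma,t}}(\psi)\big]_t
=\rho\int_{[0,t]\times\cS}S_{t-r}\phi(x)\,S_{t-r}\psi(x)\,L^{\sse{\gamma}}(dr,dx),
\end{equation*}
so the coefficient of the correction term is $\rho=-|\rho|$, not $-2(1+\rho)$. With the correct coefficient the limit identification is clean: the expectation of the integral increases to $\tfrac{1}{|\rho|}\langle\phi\otimes\psi,(S_t^{\ssup{2}}-\tilde S_t)(u_0\otimes v_0)\rangle_{\cS^2}$ by \eqref{Cov2 dual}, and $\rho\cdot\tfrac{1}{|\rho|}=-1$ gives exactly $S_t^{\ssup{2}}-(S_t^{\ssup{2}}-\tilde S_t)=\tilde S_t$. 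With your coefficient the required cancellation $\tfrac{2(1+\rho)}{|\rho|}=1$ holds only at $\rho=-\tfrac23$ (the identity ``$1+\rho=1-|\rho|$'' does not rescue this), and at $\rho=-1$ your formula would make the correction term vanish for every $\gamma$, which is plainly false. Once the coefficient is corrected, the rest of your argument goes through and matches the paper's proof.
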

The  mixed second moment estimate \eqref{second mixed moment discrete} is again a consequence of the colored particle moment duality and is proved exactly as in \cite[Lemma 4.4]{BHO15}, 
see in particular inequality (51) there.
Of course, by Proposition\ \ref{prop:moments}a) in fact equality holds in \eqref{second mixed moment discrete}, 
which however we can prove only in Subsection \ref{ssec:further_properties} below. 

\begin{corollary}[Separation of Types]\label{cor:sep discrete}
Under the assumptions of Lemma\ \ref{lem:moments 2}, we have for all $t>0$ the separation-of-types property \eqref{eq:dsep} if $\cS=\Z^d$ resp.\ \eqref{eq:csep} if $\cS=\R$.
\end{corollary}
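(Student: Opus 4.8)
The plan is to read off the separation-of-types property from the mixed second moment bound of Lemma~\ref{lem:moments 2}, exactly as was done in \cite{BHO15} for the continuous case. First consider $\cS=\Z^d$. Fix $t>0$ and $k\in\Z^d$, and take as test functions $\phi=\psi=\1_{\{k\}}$, which lie in $\bigcup_{\lambda>0}\calC_{-\lambda}^+(\Z^d)$ (resp.\ $\bigcup_{\lambda>0}\calC_\lambda^+(\Z^d)$) since they are compactly supported. Then $\langle u_t,\phi\rangle_{\Z^d}=u_t(k)=\mu_t(k)$ and $\langle v_t,\psi\rangle_{\Z^d}=\nu_t(k)$, so \eqref{second mixed moment discrete} gives
\[
\E_{\mu_0,\nu_0}\big[\mu_t(k)\,\nu_t(k)\big]\le \big\langle \1_{\{k\}}\otimes\1_{\{k\}},\,\tilde S_t(u_0\otimes v_0)\big\rangle_{(\Z^d)^2}=\tilde S_t(u_0\otimes v_0)(k,k).
\]
By definition \eqref{eq: tilde S discrete}, $\tilde S_t f(k,k)=\E_{k,k}[f(X^\ssup{1}_t,X^\ssup{2}_t)\1_{\{t<\tau^{1,2}\}}]$, and started from the diagonal we have $\tau^{1,2}=0$ almost surely, so this quantity is $0$. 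Hence $\E_{\mu_0,\nu_0}[\mu_t(k)\nu_t(k)]=0$, and since $\mu_t(k)\nu_t(k)\ge0$ this forces $\mu_t(k)\nu_t(k)=0$ $\p_{\mu_0,\nu_0}$-a.s., which is \eqref{eq:dsep}.

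For $\cS=\R$, fix $t>0$, $x\in\R$ and $\eps>0$, and apply \eqref{second mixed moment discrete} with $\phi=\psi=p_\eps(x,\cdot)$, the heat kernel; this is a bounded smooth function in $\bigcup_{\lambda>0}\calC_{-\lambda}^+(\R)$ (resp.\ $\bigcup_{\lambda>0}\calC_\lambda^+(\R)$), and $\langle u_t,p_\eps(x,\cdot)\rangle_\R=S_\eps\mu_t(x)$, likewise for $\nu_t$. Then \eqref{second mixed moment discrete} yields
\[
\E_{\mu_0,\nu_0}\big[S_\eps\mu_t(x)\,S_\eps\nu_t(x)\big]\le \big\langle p_\eps(x,\cdot)\otimes p_\eps(x,\cdot),\,\tilde S_t(u_0\otimes v_0)\big\rangle_{\R^2}=\tilde S_{t+\eps}(\mu_0\otimes\nu_0)(x,x),
\]
where the last equality uses the semigroup property of $(\tilde S_t)_{t\ge0}$ on measures together with symmetry of $\tilde p_t$, as recalled before Proposition~\ref{prop:moments}. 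Using the explicit transition density $\tilde p_t$ from \eqref{transition density tilde S} in the appendix, one has the pointwise bound $\tilde p_{t+\eps}(x,x;a,b)\le p^\ssup{2}_{t+\eps}\big((x,x),(a,b)\big)$, giving
\[
\E_{\mu_0,\nu_0}\big[S_\eps\mu_t(x)\,S_\eps\nu_t(x)\big]\le S_{t+\eps}\mu_0(x)\,S_{t+\eps}\nu_0(x),
\]
which is the first inequality in \eqref{eq:csep}. Finally, since $\tilde S_{t+\eps}(\mu_0\otimes\nu_0)(x,x)$ is the value on the diagonal of the function $\tilde S_{t+\eps}(\mu_0\otimes\nu_0)$, which is continuous and vanishes on the diagonal in $\R^2$ (again by \eqref{transition density tilde S}, as remarked in the excerpt), the right-hand side equals $0$; alternatively, one sends $\eps\to0$ and uses dominated convergence with the dominating function $S_{t+1}\mu_0(x)S_{t+1}\nu_0(x)$ for $\eps\in(0,1)$. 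Either way the $\eps\to0$ limit in \eqref{eq:csep} is $0$.

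There is no serious obstacle here: the content is entirely in Lemma~\ref{lem:moments 2} and in the elementary fact that the killed semigroup kills the diagonal. The only points requiring a little care are (i) checking that the chosen test functions lie in the required function classes (trivial, as they are bounded with suitable decay), and (ii) in the continuous case, correctly manipulating $\langle p_\eps\otimes p_\eps,\tilde S_t(u_0\otimes v_0)\rangle$ into $\tilde S_{t+\eps}(\mu_0\otimes\nu_0)$ via symmetry and the semigroup property, and then invoking the continuity/vanishing-on-the-diagonal of $\tilde S_{t+\eps}(\mu_0\otimes\nu_0)$ from the appendix — this mirrors the argument in \cite[proof of Cor.~4.5]{BHO15} and the inequalities there.
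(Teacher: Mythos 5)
Your discrete case ($\cS=\Z^d$) is correct and is exactly what the paper does: take $\phi=\psi=\1_{\{k\}}$, observe $\tilde S_t(u_0\otimes v_0)(k,k)=0$ because $\tau^{1,2}=0$ from the diagonal, and conclude $\mu_t(k)\nu_t(k)=0$ a.s.

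The continuous case contains a genuine error. You assert the identity
\[
\big\langle p_\eps(x-\cdot)\otimes p_\eps(x-\cdot),\,\tilde S_t(\mu_0\otimes\nu_0)\big\rangle_{\R^2}=\tilde S_{t+\eps}(\mu_0\otimes\nu_0)(x,x),
\]
justified by ``the semigroup property of $(\tilde S_t)$ together with symmetry of $\tilde p_t$''. This is false: the kernel $p_\eps(x-y)p_\eps(x-z)=p^\ssup{2}_\eps\big((x,x);(y,z)\big)$ is the \emph{unkilled} two-dimensional heat kernel, not $\tilde p_\eps$, and the killed semigroup only composes with itself, not with $S^\ssup{2}_\eps$. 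Indeed $\tilde S_{t+\eps}(\mu_0\otimes\nu_0)(x,x)=0$ always (diagonal), whereas
\[
\iint p_\eps(x-y)p_\eps(x-z)\,\tilde S_t(\mu_0\otimes\nu_0)(y,z)\,dy\,dz
\]
is strictly positive for $\eps>0$ whenever $\mu_0,\nu_0\ne0$, since the integrand is nonnegative, not identically zero, and $p_\eps>0$. So the quantity you would need to be zero for each fixed $\eps$ is not zero.

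The correct route — and the one the paper takes in the proof of Cor.~\ref{cor:sep} (and in \cite[Lemma 4.4]{BHO15}) — is to keep the first inequality
\[
\E_{\mu_0,\nu_0}\left[S_\eps\mu_t(x)S_\eps\nu_t(x)\right]\le\iint p_\eps(x-y)p_\eps(x-z)\,\tilde S_t(\mu_0\otimes\nu_0)(y,z)\,dy\,dz,
\]
bound the right-hand side by $S_{t+\eps}\mu_0(x)\,S_{t+\eps}\nu_0(x)$ (using $\tilde p_t\le p^\ssup{2}_t$, which you stated correctly), and then let $\eps\downarrow0$: since $p_\eps(x-\cdot)\otimes p_\eps(x-\cdot)$ concentrates at $(x,x)$ and $(y,z)\mapsto\tilde S_t(\mu_0\otimes\nu_0)(y,z)$ is \emph{continuous and vanishes on the diagonal} (cf.\ \eqref{transition density tilde S}), the integral tends to $\tilde S_t(\mu_0\otimes\nu_0)(x,x)=0$. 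Note this is a heat-kernel-to-Dirac argument using continuity at $(x,x)$ plus the uniform bound $S_{t+\eps}\mu_0(x)S_{t+\eps}\nu_0(x)$; it is not a dominated-convergence argument with a fixed dominating function, so your ``alternatively'' sentence also misidentifies the mechanism. The conclusion is correct, but the route through a purported semigroup identity needs to be removed.
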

{\begin{proof}
For the continuous case $\cS=\R$, see \cite[Lemma 4.4]{BHO15} and also the proof of Cor.\ \ref{cor:sep} below. For the discrete case $\cS=\Z^d$ and $k\in\cS$, simply choose $\phi:=\psi:=\1_{\{k\}}$ in \eqref{second mixed moment discrete}.  
\end{proof}

By the above results, it is straightforward to show uniqueness in our martingale problem (under the separation-of-types condition), which as in \cite{BHO15} follows from self-duality. 
Note that up to now, all results on tightness and properties of limit points included also the case $\rho=-1$. However, in the next proposition we have to exclude this case since for $\rho=-1$ the self-duality is no longer sufficient to deduce uniqueness.

\begin{prop}[Uniqueness]\label{prop:uniqueness}
Fix $\rho\in(-1,0)$ and (possibly random) initial conditions $(u_0,v_0)\in\calM_\tem(\cS)^2$ or $\calM_\rap(\cS)^2$. 
Then there is at most one solution $(u_t,v_t)_{t\ge0}$ to the martingale problem $(\mathbf{MP}_F(\cS))_{u_0,v_0}^\rho$ 
satisfying the separation-of-types property \eqref{eq:dsep} if $\cS=\Z^d$ resp.\ \eqref{eq:csep} if $\cS=\R$.
\end{prop}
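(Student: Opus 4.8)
The plan is to deduce uniqueness from the self-duality function $F$ from \eqref{self-duality function}, following the standard strategy of Mytnik~\cite{Mytnik98} and Etheridge--Fleischmann~\cite{EF04} as adapted in~\cite{BHO15}. The central observation is that if $(\mu_t,\nu_t)_{t\ge0}$ and $(\tilde\mu_t,\tilde\nu_t)_{t\ge0}$ are two solutions of $(\mathbf{MP}_F(\cS))_{u_0,v_0}^\rho$ both satisfying the separation-of-types property, then one constructs an independent ``dual'' process $(\phi_t,\psi_t,\Lambda^*_t)_{t\ge0}$ — namely a second solution of the martingale problem started from deterministic test functions $\phi,\psi$ — and computes $\frac{d}{dt}\E[F(\mu_{t-s},\nu_{t-s},\phi_s,\psi_s)]$. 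Because the martingale problem \eqref{MP9} is exactly tailored to this $F$ (the generator acting on the $\mu$-variables produces the term $\tfrac12\langle\langle\cdot,\cdot,\Delta\phi,\Delta\psi\rangle\rangle_\rho$ plus the $\Lambda$-correction $-4(1-\rho^2)\phi\psi$, and symmetrically on the $\phi$-variables), the two contributions cancel \emph{up to} the mixed term coming from the measures $\Lambda$ and $\Lambda^*$. The upshot is the duality relation
\[
\E_{u_0,v_0}\big[F(\mu_t,\nu_t,\phi,\psi)\big] = \E_{\phi,\psi}\big[F(u_0,v_0,\phi_t,\psi_t)\big],
\]
where the right-hand side depends on the solution only through $(u_0,v_0)$ — hence the one-dimensional distributions of any separation-of-types solution are uniquely determined.

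The key point where the separation-of-types property enters, and which I expect to be the main technical obstacle, is controlling the extra term generated by the measure $\Lambda$. In the differentiation argument one picks up, in addition to the cancelling generator terms, a contribution of the form $-4(1-\rho^2)\int_\cS F(\mu_{t-s},\nu_{t-s},\phi_s,\psi_s)\,\phi_s(x)\psi_s(x)\,\Lambda(\,\cdot\,,dx)$ from the $\mu$-dynamics and a symmetric one from the $\phi$-dynamics. These do not cancel each other; instead one has to argue they vanish. This is where \eqref{eq:dsep} (resp.\ \eqref{eq:csep}) is used: the measure $\Lambda_t$ is (in a sense made precise in~\cite{BHO15}) supported on the set where both types are present, and separation of types forces this set to be Lebesgue-null in the continuous case — so that $\E[\langle\Lambda_t,g\rangle]$ can be bounded by the mixed second moment $\E[\langle\mu_t,\cdot\rangle\langle\nu_t,\cdot\rangle]$, which by Lemma~\ref{lem:moments 2} is governed by $\tilde S_t(u_0\otimes v_0)$, a quantity that vanishes on the diagonal. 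Thus the cross terms integrate to zero and the duality identity survives. For $\cS=\R$ this is essentially the argument of~\cite[Sec.~4]{BHO15}; for $\cS=\Z^d$ it is genuinely easier, since \eqref{eq:dsep} says the density of $\Lambda_t$ charges only sites $k$ with $\mu_t(k)\nu_t(k)>0$, of which there are none almost surely.

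Concretely I would proceed in the following steps. First, recall that $F$ is a self-duality function in the sense of~\cite{EF04}: applying the generator of $(\mathbf{MP}_F)$ in the $(\mu,\nu)$-variables to $F(\cdot,\cdot,\phi,\psi)$ and in the $(\phi,\psi)$-variables to $F(\mu,\nu,\cdot,\cdot)$ yields the \emph{same} drift term $\tfrac12 F\,\langle\langle\mu,\nu,\Delta\phi,\Delta\psi\rangle\rangle_\rho$, together with the respective $\Lambda$- and $\Lambda^*$-corrections; this is a direct computation using the explicit form \eqref{self-duality product}–\eqref{self-duality function} and the identity $\partial^2_{\langle\langle\cdot\rangle\rangle}$ giving the factor $4(1-\rho^2)$. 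Second, take two solutions with the same initial condition, build the product-space process, and using the martingale property \eqref{MP9} for each coordinate (and a standard Fubini/stopping argument to justify interchanging expectation and the $\frac{d}{dt}$, which requires the uniform moment bounds — here one invokes Proposition~\ref{prop:moments} to guarantee $|F|\le 1$ is not enough and one genuinely needs integrability of the $\Lambda$-terms via \eqref{finiteness Lambda 1}), show $s\mapsto\E[F(\mu_{t-s},\nu_{t-s},\phi_s,\psi_s)]$ has derivative equal to the sum of the two non-cancelling $\Lambda$-terms. Third, show these terms vanish by the separation-of-types/second-moment argument above. Fourth, conclude equality of $\E[F(\mu_t,\nu_t,\phi,\psi)]$ for all admissible $\phi,\psi$; since the family $\{F(\cdot,\cdot,\phi,\psi)\}$ is measure-determining on $\calM_\tem(\cS)^2$ (resp.\ $\calM_\rap(\cS)^2$) — this is the classical argument that such exponential functionals with complex-valued exponents separate tempered measures — the one-dimensional marginals agree. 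Fifth, upgrade to equality of all finite-dimensional distributions by the Markov property: any solution of $(\mathbf{MP}_F)$ is Markov (this is part of the construction via the $\gamma\to\infty$ limit, cf.\ the preceding propositions), so equality of marginals from every deterministic initial condition propagates to equality in law on path space. The only real obstacle is the vanishing of the $\Lambda$-cross-terms; everything else is a careful but routine transcription of~\cite{BHO15}, with the discrete case being a simplification.
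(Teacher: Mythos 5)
Your proposal takes exactly the route of the paper: the paper's proof of Proposition~\ref{prop:uniqueness} consists of establishing the self-duality relation \eqref{eq:self-duality} between a solution with tempered and a solution with rapidly decreasing initial data (deferring the computation to \cite[Prop.~5.1--5.2]{BHO15}, with the remark that the discrete case avoids the $S_\eps$-smoothing), and then invoking the standard argument that self-duality with respect to a measure-determining family of functions, together with the Markov property, yields uniqueness of the law. Your five steps are a faithful transcription of this, and you correctly isolate the vanishing of the $\Lambda$-cross-terms as the only place where the separation-of-types hypothesis is genuinely needed.

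One justification in your key step is wrong as stated and should be repaired. You claim that $\E[\langle\Lambda_t,g\rangle]$ ``can be bounded by the mixed second moment $\E[\langle\mu_t,\cdot\rangle\langle\nu_t,\cdot\rangle]$, which is governed by $\tilde S_t(u_0\otimes v_0)$.'' This is false: by the first-moment formula \eqref{first moment Lambda discrete equality} the expectation of $\Lambda$ is governed by $\tfrac{1}{|\rho|}\big(S_t^{\ssup{2}}-\tilde S_t\big)(u_0\otimes v_0)$, whereas the mixed second moment \eqref{second mixed moment 2} is governed by $\tilde S_t(u_0\otimes v_0)$; these two quantities are complementary (the first is large precisely near the diagonal, where the second vanishes), so no such bound holds, and indeed $\Lambda$ is a genuinely nontrivial measure. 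The correct mechanism is different: in the cross-term $\int F(\ldots)\,\tilde u_{t-s}(x)\tilde v_{t-s}(x)\,\Lambda(ds,dx)$ it is the \emph{integrand} that vanishes, by the separation-of-types property of the \emph{dual} process (and absolute continuity/regularity of $\E[\Lambda(ds,dx)]$, which does follow from \eqref{first moment Lambda discrete equality}); symmetrically, the term involving $\Lambda^*$ requires separation of types of the forward process, and since $\mu_s,\nu_s$ are only measures in the continuous case, this is exactly where the $S_\eps$-smoothing and condition \eqref{eq:csep} (rather than a pointwise product) enter, while for $\cS=\Z^d$ condition \eqref{eq:dsep} makes the integrand vanish identically. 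With that correction, your outline coincides with the paper's argument.
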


\begin{proof}
For the case $\cS=\R$, this is proved in \cite[Prop.\ 5.2]{BHO15}, and the case $\cS=\Z^d$ follows along the same lines.
As in \cite[Prop.\ 5.1]{BHO15}, one shows first that solutions to $(\mathbf{MP}_F(\Z^d))^\rho$ satisfying the separation-of-types condition are self-dual w.r.t.\ the function $F$ from \eqref{self-duality function}: 
For any solution $(u_t,v_t)_{t\geq 0}$ of the martingale problem $(\mathbf{MP}_F(\Z^d))_{u_0,v_0}^\rho$ with initial conditions
$(u_0,v_0) \in \calM_\tem(\Z^d)^2$ and any solution $(\tilde u_t, \tilde v_t)_{t \geq 0}$ of
$(\mathbf{MP}_F(\Z^d))_{\tilde u_0,\tilde v_0}^\rho$ with $(\tilde u_0,\tilde v_0) \in \calM_\rap(\Z^d)^2$, we have
\begin{equation}\label{eq:self-duality}
 \E_{u_0, v_0} [ F(u_t,v_t,\tilde u_0, \tilde v_0)] =\E_{\tilde u_0, v_0} [ F(u_0,v_0,\tilde u_t, \tilde v_t)], \qquad t \geq 0.
 \end{equation}
In fact, for the discrete case the proof simplifies considerably: 
Since $\cM_\tem(\Z^d)=\cB_\tem^+(\Z^d)$ and the discrete Laplace operator can be applied directly to the solution $(u_t,v_t)$, 
we do not need to perform a spatial smoothing via the heat kernel $S_\eps$ as in the proof of \cite[Prop.\ 5.1]{BHO15}. 
See also the proof of \cite[Prop.\ 4.7]{KM11b} for the slightly different martingale problem employed in that paper, or the proof of \cite[Thm.\ 2.4(b)]{DP98} for the discrete finite rate model. With the self-duality at hand, uniqueness follows by standard arguments, see e.g.~\cite[proof of Prop.\ 4.1]{KM11b} or \cite[proof of Thm.\ 2.4(a)]{DP98}.
\end{proof}

We are now ready to finish the proof of Theorem~\ref{thm:MPinf_discrete}:
For $( u_0, v_0)\in(\calB^+_\tem(\cS))^2$ or $(\calB^+_\rap(\cS))^2$, combining Prop.~\ref{tightness_MZ}, Prop.\ \ref{MPinf 2}, Cor.\ \ref{cor:sep discrete} and Prop.~\ref{prop:uniqueness} yields convergence of the finite-rate
models $\mathrm{SBM}(\rho,\gamma)_{u_0,v_0}$ to $\mathrm{SBM}(\rho,\infty)_{u_0,v_0}$ as $\gamma\uparrow\infty$, 
and in particular also existence and uniqueness (subject to separation-of-types) of solutions to the martingale problem $(\mathbf{MP}_F(\cS))_{u_0,v_0}^\rho$. Thus Theorem \ref{thm:MPinf_discrete}b) is fully proved. 

For part a) and $\cS=\R$, it remains to show existence of a solution to 
$(\mathbf{MP}_F(\R))_{u_0,v_0}^\rho$ satisfying the separation-of-types condition \eqref{eq:csep} if the initial conditions are from $\cM_\tem(\R)^2$ rather than from $\cB_\tem^+(\R)^2$.
This can be done by approximating $(u_0,v_0)\in\cM_\tem(\R)^2$ by absolutely continuous initial conditions $(S_\eps u_0,S_\eps v_0)\in\cB_\tem^+(\R)^2$ 
and then showing tightness, properties of limit points and convergence as $\eps\downarrow0$ by the same strategy as above.
Alternatively, one can also do the $\gamma\uparrow\infty$- and $\eps\downarrow0$-limits `at the same time' 
and repeat all arguments in this subsection with variable initial conditions $(u_0^\sse{\gamma}, v_0^\sse{\gamma}):=(S_{\gamma^{-1}}u_0,S_{\gamma^{-1}}v_0)$, $\gamma>0$, instead of fixed ones.
Finally, the existence of a solution for initial conditions from $\cM_\tem(\R)^2$ follows also from the proof of our convergence result Theorem \ref{thm:conv}, see Section \ref{sec:convergence} below.

Finally, in order to show the strong Markov property we argue as in the proof of \cite[Lemma 5.4]{Dawsonetal2003}: 
We know that for each $(u_0,v_0)\in\cM_\tem(\cS)^2$ resp.\ $\cM_\rap(\cS)^2$, there is a unique solution $(u_t,v_t)_{t\ge0}$ to $(\mathbf{MP}_F(\cS))_{u_0,v_0}^\rho$. For $t>0$, let $P_t\big((u_0,v_0);\cdot\,\big)$ denote the law of $(u_t,v_t)$ on $\cM_\tem(\cS)^2$ resp.\ on $\cM_\rap(\cS)^2$ under the corresponding probabilty measure $\p_{u_0,v_0}$. 
Using the self-duality \eqref{eq:self-duality} for $\cS=\Z^d$ resp.\ the approximate self-duality \cite[eq. (56)]{BHO15} for $\cS=\R$ and a monotone class argument, it is easy to see 
that $(u_0,v_0)\mapsto P_t((u_0,v_0);\cdot\,)$ is Borel measurable; consequently $P_t$ is a transition kernel. Now the strong Markov property of $(u_t,v_t)_{t\ge0}$ follows along the same lines as in the proof of \cite[Lemma 5.4]{Dawsonetal2003}.

\subsection{Further properties of the limit}\label{ssec:further_properties}
In this subsection, we prove some additional properties of the infinite rate model $\mathrm{SBM}(\rho,\infty)$, including the identities for second moments from Proposition~\ref{prop:moments}.

We start by proving a version of Lemma 3.2 in \cite{DM11a} that bounds $p$-th moments 
of $\mathrm{SBM}(\rho,\gamma)$ uniformly in $\gamma$, provided $\rho$ is `sufficiently close' to $-1$. 
This result is stated in \cite{DM11a} for $p=2+\eps$ (in which case it holds for all $\rho<0$, and which is all that we will need in the present paper\footnote{In \cite{HOV15} however, we use the result also for values $n>2$.}), 
but using the `critical curve' of \cite{BDE11} the proof works in fact for other values of $p$ as well.}
Also in contrast to \cite{DM11a}, we do not restrict to integrable initial conditions and do not consider the total mass, but test against suitable test functions. 

In the following, for a $\rho$-correlated planar Brownian motion $(W^\ssup{1},W^\ssup{2})$ 
starting at $(x,y)\in(\R^+)^2$, we denote by $\tau$ the first hitting time of $(W^\ssup{1},W^\ssup{2})$ at the boundary of the first quadrant $(\R^+)^2$, and by  $\E_{x,y}[\cdot]$ the corresponding expectation.

\begin{lemma}\label{lem:DDS2}
Let $\rho\in[-1,1)$ and $p\ge 1$ such that $\rho+\cos(\pi/p)<0$. 
Then there exists a constant $C=C(p)$ only depending on $p$ such that the following holds: 
For all $(u_0,v_0)\in\calB^+_\tem(\cS)^2$ (resp.\ $\calB^+_\rap(\cS)^2$), $\phi\in\bigcup_{\lambda>0}\calC_\lambda^+(\cS)$ (resp.\ $\bigcup_{\lambda>0}\calC_{-\lambda}^+(\cS)$) and $T>0$ we have
\[\bal
&\sup_{\gamma>0}\E_{u_0,v_0}\left[\sup_{t\in[0,T]}\<{ u^\sse{\gamma}_t, {}S_{T-t}\phi}_\cS^{p}\right] \\
&\le C\left(\E_{\langle u_0, {S}_T\phi\rangle_{\cS},\langle v_0, {S}_T\phi\rangle_{\cS}}\left[\tau^{{p}/2}\right] + \<{ u_0, {S}_T\phi}_\cS^{p}\right)<\infty,
\eal\]
and analogously for $v^\sse{\gamma}$.
\end{lemma}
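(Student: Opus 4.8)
\textbf{Proof plan for Lemma \ref{lem:DDS2}.}

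The plan is to reduce the statement to a moment estimate for the one-dimensional martingale $M^\sse{\gamma,T}_t(\phi)=\<{u^\sse{\gamma}_t,S_{T-t}\phi}_\cS-\<{u_0,S_T\phi}_\cS$ from \eqref{MP1a dual}, and then to control its bracket $[M^\sse{\gamma,T}(\phi)]$ via a comparison with an exit time of a correlated planar Brownian motion, using the critical-curve estimate of \cite{BDE11}. First I would fix $T>0$ and the test function $\phi$, write $Y^\sse{\gamma}_t:=\<{u^\sse{\gamma}_t,S_{T-t}\phi}_\cS$ and $Z^\sse{\gamma}_t:=\<{v^\sse{\gamma}_t,S_{T-t}\phi}_\cS$, which by \eqref{MP1a dual}--\eqref{Cov1a dual} are nonnegative martingales on $[0,T]$ started at $\<{u_0,S_T\phi}_\cS$ resp. $\<{v_0,S_T\phi}_\cS$, with
\[
[Y^\sse{\gamma}]_t=[Z^\sse{\gamma}]_t=\int_{[0,t]\times\cS}(S_{T-r}\phi(x))^2\,L^\sse{\gamma}(dr,dx),\qquad [Y^\sse{\gamma},Z^\sse{\gamma}]_t=\rho\int_{[0,t]\times\cS}(S_{T-r}\phi(x))^2\,L^\sse{\gamma}(dr,dx).
\]
Thus $(Y^\sse{\gamma},Z^\sse{\gamma})$ is, after a deterministic time change by the increasing process $A^\sse{\gamma}_t:=\int_{[0,t]\times\cS}(S_{T-r}\phi(x))^2\,L^\sse{\gamma}(dr,dx)$, a pair of $\rho$-correlated (local) martingales; this is exactly the structure underlying the `colored particle' moment duality and the analysis in \cite{BDE11}. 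By the Dambis--Dubins--Schwarz theorem we may represent $(Y^\sse{\gamma},Z^\sse{\gamma})$ as a $\rho$-correlated planar Brownian motion $(W^\ssup1,W^\ssup2)$ run up to the (random) time $A^\sse{\gamma}_T$; since both coordinates are nonnegative and absorbed at $0$ (the branching coefficient vanishes there), the path $(Y^\sse{\gamma},Z^\sse{\gamma})$ stays in the first quadrant and is stopped no later than the exit time $\tau$ of $(W^\ssup1,W^\ssup2)$ from $(\R^+)^2$. Consequently $A^\sse{\gamma}_T\le\tau$ pathwise in this representation, and hence for any $q$, $\E[(A^\sse{\gamma}_T)^{q}]\le\E_{y_0,z_0}[\tau^{q}]$ with $y_0=\<{u_0,S_T\phi}_\cS$, $z_0=\<{v_0,S_T\phi}_\cS$.

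Next I would invoke the main result of \cite{BDE11} (the `critical curve'): for $p\ge1$ with $\rho+\cos(\pi/p)<0$, the exit time $\tau$ of a $\rho$-correlated planar Brownian motion from the first quadrant, started at $(y_0,z_0)$, has finite $p/2$-th moment, with $\E_{y_0,z_0}[\tau^{p/2}]\le C(p)(y_0^2+z_0^2)^{p/2}\le C(p)(y_0^p+z_0^p)$ — this is precisely where the condition $\rho+\cos(\pi/p)<0$ enters, since the angle of the wedge $(\R^+)^2$ is $\pi/2$ and the Burkholder-type / Spitzer exit-time asymptotics give integrability up to the corresponding critical exponent. Combining this with the Burkholder--Davis--Gundy inequality applied to the martingale $Y^\sse{\gamma}-y_0$ on $[0,T]$,
\[
\E_{u_0,v_0}\!\left[\sup_{t\le T}|Y^\sse{\gamma}_t-y_0|^{p}\right]\le C_p\,\E_{u_0,v_0}\big[[Y^\sse{\gamma}]_T^{p/2}\big]=C_p\,\E_{u_0,v_0}\big[(A^\sse{\gamma}_T)^{p/2}\big]\le C_p\,\E_{y_0,z_0}[\tau^{p/2}],
\]
uniformly in $\gamma>0$, and then using $\sup_{t\le T}(Y^\sse{\gamma}_t)^p\le 2^{p-1}(\sup_{t\le T}|Y^\sse{\gamma}_t-y_0|^p+y_0^p)$ gives the claimed bound
\[
\sup_{\gamma>0}\E_{u_0,v_0}\!\left[\sup_{t\in[0,T]}\<{u^\sse{\gamma}_t,S_{T-t}\phi}_\cS^{p}\right]\le C\big(\E_{y_0,z_0}[\tau^{p/2}]+y_0^{p}\big)<\infty.
\]
The argument for $v^\sse{\gamma}$ is identical by symmetry of the roles of the two types. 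Finally, the finiteness of the right-hand side for $(u_0,v_0)\in\calB^+_\tem(\cS)^2$ and $\phi\in\bigcup_{\lambda}\calC_\lambda^+(\cS)$ (resp. the rapidly decreasing case) follows because $y_0=\<{u_0,S_T\phi}_\cS<\infty$ by the pairing of a tempered measure/function against a function in the dual class (standard heat-semigroup bounds, cf. Appendix \ref{sec:semigroup}), and then $\E_{y_0,z_0}[\tau^{p/2}]<\infty$ by \cite{BDE11}.

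The main obstacle I anticipate is making the reduction to the planar Brownian exit time fully rigorous: the time-change $A^\sse{\gamma}_T$ may be finite while one of the coordinates has not yet reached $0$, so the inequality $A^\sse{\gamma}_T\le\tau$ requires justifying that the DDS Brownian motion can be extended beyond $A^\sse{\gamma}_T$ independently and that absorption at the axes is respected — this is precisely the content of the colored-particle moment duality of \cite{EF04} (Prop. 9 for $\cS=\Z^d$, Prop. 12 for $\cS=\R$), which directly expresses $\E_{u_0,v_0}[Y^\sse{\gamma}_t Z^\sse{\gamma}_t]$ and higher mixed moments in terms of correlated random walks / Brownian motions killed at collision, so in practice I would phrase the $p/2$-moment bound on $A^\sse{\gamma}_T$ as a consequence of that duality together with the exit-time estimate of \cite{BDE11}, exactly as in \cite[Lemma 3.2]{DM11a}, rather than via a pathwise DDS coupling. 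A secondary technical point is the interchange of $\sup_{t\le T}$ with the expectation and the uniformity in $\gamma$, which is handled cleanly by BDG since the bracket bound is already uniform in $\gamma$.
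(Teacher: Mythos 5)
Your proposal is correct and follows essentially the same route as the paper: the Green function representation \eqref{MP1a dual}--\eqref{Cov1a dual}, a Dambis--Dubins--Schwarz time change to a $\rho$-correlated planar Brownian motion, the pathwise bound $A_T\le\tau_Q$ for the exit time of the (shifted) quadrant because the coordinates stay above $-\<{u_0,S_T\phi}_\cS$ and $-\<{v_0,S_T\phi}_\cS$, then Burkholder--Davis--Gundy and the critical-curve moment bound $\E_{y_0,z_0}[\tau^{p/2}]<\infty$ iff $\rho+\cos(\pi/p)<0$ from \cite{BDE11}. The technical point you flag about extending the DDS Brownian motion beyond $A^\sse{\gamma}_T$ is resolved in the paper simply by invoking the enlargement-of-probability-space version of DDS for correlated martingales (\cite[Lemma 4.2, Rem.\ 4.3]{BDE11}), so the pathwise coupling goes through and no retreat to the colored-particle moment duality is needed (which would in any case only yield integer moments).
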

\begin{proof}
Fix $T>0$. 
By the Green function representation of the finite rate model (see \eqref{MP1a dual}-\eqref{Cov1a dual}), we know that  
\begin{align}\label{MP2a dual}
\bal
 M^\sse{\gamma,T}_t(\phi) &:= \<{ u^\sse{\gamma}_t, {}S_{T-t}\phi}_{\cS} - \<{ u_0, {S}_T\phi}_{\cS},\\
 N^\sse{\gamma,T}_t(\phi) &:= \<{ v^\sse{\gamma}_t, {}S_{T-t}\phi}_{\cS} - \<{ v_0, {S}_T\phi}_{\cS},
\eal\end{align}
$t\in[0,T]$, are continuous square-integrable zero-mean martingales with covariation structure given by
\[\bal
  &[ M^\sse{\gamma,T}(\phi),  M^\sse{\gamma,T}(\phi) ]_t \notag\\ &
  \hspace{1cm} = [ N^\sse{\gamma,T}(\phi), N^\sse{\gamma,T}(\phi) ]_t 
  = \gamma \int_0^t \< {\left({S}_{T-r}\phi\right)^2, u_r^\sse{\gamma}v_r^\sse{\gamma}}_\cS\,dr, 
  \\
& [  M^\sse{\gamma,T}(\phi) , N^\sse{\gamma,T}(\phi) ]_t   = \rho \gamma  \int_0^t \< {\left({S}_{T-r}\phi\right)^2, u_r^\sse{\gamma}v_r^\sse{\gamma}}_\cS\,dr.
\eal\]
By a version of the Dubins-Schwarz Theorem (see e.g.\ \cite[Lemma 4.2, Rem.\ 4.3]{BDE11}), we can represent $(M^\sse{\gamma,T}_t(\phi), N^\sse{\gamma,T}_t(\phi))_{t\in[0,T]}$ 
(possibly on an enlargement of the underlying probability space) as a time-changed $\rho$-correlated planar Brownian motion $(W^\ssup{1},W^\ssup{2})$, i.e.
\[(M^\sse{\gamma,T}_t(\phi), N^\sse{\gamma,T}_t(\phi))=\left(W^\ssup{1}_{A_t},W^\ssup{2}_{A_t}\right),\qquad t\in[0,T],\]
where $A_t:=\gamma\int_0^t\left\langle ({S}_{T-s}\phi)^2,u_s^\sse{\gamma} v_s^\sse{\gamma}\right\rangle_\cS\, ds$.
Now let $\tau_Q:=\inf\{t>0:(W_t^\ssup{1},W_t^\ssup{2})\notin Q\}$ denote  the first hitting time of $(W^\ssup{1},W^\ssup{2})$ of the boundary of the quadrant
\[Q:=\left\{(x,y)\in\R^2:x\ge -\<{ u_0, {S}_T\phi}_{\cS} \text{ and } y\ge -\<{ v_0, {S}_T\phi}_{\cS}\right\}.\]
Since $u^\sse{\gamma}$, $v^\sse{\gamma}$ and $\phi$ are all nonnegative, we have 
$(M^\sse{\gamma,T}_t(\phi), N^\sse{\gamma,T}_t(\phi))\in Q$ and thus $A_t\le\tau_Q$ for all $t\in[0,T]$. 
Then for each $p\ge1$ we obtain by \eqref{MP2a dual} and the Burkholder-Davis-Gundy inequality that
\begin{align}\label{key_estimate}\bal
\E_{u_0,v_0}\left[\sup_{t\in[0,T]}\<{ u^\sse{\gamma}_t, {}S_{T-t}\phi}_\cS^p\right] &\le C_p\left(\E_{u_0,v_0}\left[\sup_{t\in[0,T]}\left|M_t^\sse{\gamma,T}(\phi)\right|^p\right] + \<{ u_0, {S}_T\phi}_\cS^p\right) \\
&\le C_p'\left(\E_{0,0}\left[\tau_Q^{p/2}\right] + \<{ u_0, {S}_T\phi}_\cS^p\right) \\
&=C_p'\left(\E_{\langle u_0, {S}_T\phi\rangle_{\cS},\langle v_0, {S}_T\phi\rangle_{\cS}}\left[\tau^{p/2}\right] + \<{ u_0, {S}_T\phi}_\cS^p\right),
\eal\end{align}
and the last expectation is finite iff $\rho+\cos(\frac{\pi}{p})<0$, see \cite[Thm.\ 5.1]{BDE11}. 
Since the constant $C_p'$ depends only on $p$, the proof is complete.
\end{proof}
By combining the previous lemma with the lower bounds in Lemma \ref{lemma estimates}a) for continuous space (see \eqref{estimate 1a}) 
resp.\ Lemma \ref{lem:uniform preservation} for discrete space (choose $n=1$ in \eqref{eq:uniform preservation}), the following corollary is immediate, 
where we recall the notation $\phi_\la(x) := e^{-\la |x|}$ for $x \in \cS$, $\la\in\R$. 

\begin{corollary}\label{cor:uniform p-th moment 2}
Let $\rho\in[-1,1)$ and $p\ge 1$ such that $\rho+\cos(\pi/p)<0$. 
Then for each $T>0$ and $\lambda>0$ (resp.\ $\lambda<0$) there exists a constant $C(p,\lambda,T)$ such that
for all $(u_0,v_0)\in\calB^+_\tem(\cS)^2$ (resp.\ $\calB^+_\rap(\cS)^2$) we have
\[\bal
&\sup_{\gamma>0}\E_{u_0,v_0}\left[\sup_{t\in[0,T]}\<{ u^\sse{\gamma}_t, \phi_\lambda}_\cS^{p}\right]\\
&\le C(p,\lambda,T)\left(\E_{\langle u_0, {S}_T\phi_\lambda\rangle_\cS,\langle v_0, {S}_T\phi_\lambda\rangle_\cS}\left[\tau^{p/2}\right] + \<{ u_0, {S}_T\phi_\lambda}_\cS^{p}\right)<\infty,
\eal\]
and analogously for $v^\sse{\gamma}$.
\end{corollary}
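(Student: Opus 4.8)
The plan is to derive Corollary~\ref{cor:uniform p-th moment 2} directly from Lemma~\ref{lem:DDS2} by specializing the test function to $\phi=\phi_\lambda$ and then bounding the exponential semigroup quantities by constants depending only on $p,\lambda,T$. First I would recall that Lemma~\ref{lem:DDS2} already gives, for any admissible $\phi$ in the appropriate H\"older-type class, the uniform-in-$\gamma$ bound
\[
\sup_{\gamma>0}\E_{u_0,v_0}\left[\sup_{t\in[0,T]}\<{u^\sse{\gamma}_t,S_{T-t}\phi}_\cS^{p}\right]\le C(p)\left(\E_{\langle u_0,S_T\phi\rangle_\cS,\langle v_0,S_T\phi\rangle_\cS}\left[\tau^{p/2}\right]+\<{u_0,S_T\phi}_\cS^{p}\right).
\]
So the only remaining work is to pass from $S_{T-t}\phi_\lambda$ back to $\phi_\lambda$ on the left-hand side. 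This is precisely where the lower bounds on the killed/heat semigroup enter: by Lemma~\ref{lemma estimates}a) (equation~\eqref{estimate 1a}) in the continuous case, respectively Lemma~\ref{lem:uniform preservation} with $n=1$ (equation~\eqref{eq:uniform preservation}) in the discrete case, one has a bound of the form $S_{T-t}\phi_\lambda(x)\ge c(\lambda,T)\,\phi_\lambda(x)$ uniformly for $t\in[0,T]$ and $x\in\cS$, since $\phi_\lambda$ is an eigenfunction-type object for the heat semigroup up to exponential-in-time factors. Hence $\<{u^\sse{\gamma}_t,\phi_\lambda}_\cS\le c(\lambda,T)^{-1}\<{u^\sse{\gamma}_t,S_{T-t}\phi_\lambda}_\cS$, and taking $p$-th powers and suprema over $t\in[0,T]$ and then over $\gamma>0$ transfers the bound.

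Concretely, the steps in order are: (i) note $\phi_\lambda\in\bigcup_{\lambda'>0}\calC_{\lambda'}^+(\cS)$ (resp.\ $\bigcup_{\lambda'>0}\calC_{-\lambda'}^+(\cS)$), so Lemma~\ref{lem:DDS2} applies with $\phi=\phi_\lambda$; (ii) invoke the uniform-in-$t$ lower bound $S_{T-t}\phi_\lambda\ge c(\lambda,T)\phi_\lambda$ from \eqref{estimate 1a} resp.\ \eqref{eq:uniform preservation} with $n=1$; (iii) combine:
\[
\sup_{\gamma>0}\E_{u_0,v_0}\left[\sup_{t\in[0,T]}\<{u^\sse{\gamma}_t,\phi_\lambda}_\cS^{p}\right]
\le c(\lambda,T)^{-p}\sup_{\gamma>0}\E_{u_0,v_0}\left[\sup_{t\in[0,T]}\<{u^\sse{\gamma}_t,S_{T-t}\phi_\lambda}_\cS^{p}\right],
\]
and plug in the Lemma~\ref{lem:DDS2} bound, absorbing $c(\lambda,T)^{-p}$ and $C(p)$ into a new constant $C(p,\lambda,T)$; (iv) finally observe that the expression on the right is finite because, under the hypothesis $\rho+\cos(\pi/p)<0$, the expectation $\E_{a,b}[\tau^{p/2}]$ is finite for any starting point $(a,b)\in(\R^+)^2$ by \cite[Thm.\ 5.1]{BDE11}, and $\<{u_0,S_T\phi_\lambda}_\cS<\infty$ since $(u_0,v_0)$ is tempered (resp.\ rapidly decreasing) and $\phi_\lambda$ decays (resp.\ grows) at the matching rate; (v) repeat verbatim with $v^\sse{\gamma}$ in place of $u^\sse{\gamma}$.

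There is essentially no main obstacle here: this corollary is a routine specialization, and the statement explicitly flags it as ``immediate.'' The only point requiring mild care is verifying that the lower semigroup bound $S_{T-t}\phi_\lambda\gtrsim\phi_\lambda$ holds with a constant uniform in $t\in[0,T]$ (and not merely pointwise in $t$) — but this is exactly the content of \eqref{estimate 1a} and \eqref{eq:uniform preservation}, which are quoted as available, so it suffices to cite them. One should also make sure the admissibility classes match between the two lemmas (tempered initial data pair with exponentially decaying test functions $\phi_\lambda$, $\lambda>0$, and rapidly decreasing data with exponentially growing ones, $\lambda<0$), which is consistent with the way $\phi_\lambda(x)=e^{-\lambda|x|}$ is used throughout.
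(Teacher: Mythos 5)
Your proposal is correct and follows exactly the paper's argument: apply Lemma~\ref{lem:DDS2} with $\phi=\phi_\lambda$ and then use the uniform-in-$t$ lower bound $S_{T-t}\phi_\lambda\geq c(\lambda,T)\phi_\lambda$ from \eqref{estimate 1a} (continuous) resp.\ \eqref{eq:uniform preservation} with $n=1$ (discrete) to replace $S_{T-t}\phi_\lambda$ by $\phi_\lambda$ on the left-hand side. The paper itself flags the corollary as ``immediate'' from precisely these two ingredients, and your chain of inequalities matches.
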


Applied for $\rho<0$ and $p>2$, the above uniform moment bounds allow us to prove Proposition \ref{prop:moments}.  
They also allow us to extend the martingale representation \eqref{eq:discrete MP 1_finite}-\eqref{eq:discrete MP 1a_finite} and the Green function representation \eqref{MP1a dual}-\eqref{Cov1a dual} to the infinite rate limit. Before turning to the proof of Prop.\ \ref{prop:moments}, we collect these and some additional properties of the limit (which will be of importance in Section \ref{sec:convergence} below) in the following proposition:

\begin{prop}\label{prop:further_properties}
Let $\rho\in(-1,0)$ and $( u_0, v_0)\in\calB^+_\rap(\cS)^2$ (respectively $\calB^+_\tem(\cS)^2$).
Then as $\gamma\uparrow\infty$, the processes $(u^\sse{\gamma}_t,v^\sse{\gamma}_t,L_t^\sse{\gamma})_{t\ge0}$ converge with respect to the Meyer-Zheng topology to a process $(u_t,v_t,L_t)_{t\ge0}\in D_{[0,\infty)}(\calM_{\rap}(\cS)^3)$ (respectively $D_{[0,\infty)}(\calM_{\tem}(\cS)^3)$). 
The limit has the following properties in addition to those stated in Theorem \ref{thm:MPinf_discrete} and Proposition \ref{prop:moments}:
\begin{itemize}
\item[a)] 
For all $\phi,\psi\in\bigcup_{\lambda>0}\calC^\ssup{2}_{-\lambda}(\cS)$ (resp.\ $\bigcup_{\lambda>0}\calC^\ssup{2}_\lambda(\cS)$) we have that
\begin{align}\label{eq:discrete MP 1}\bal
M_t(\phi) &:= \langle u_t, \phi \rangle_\cS - \langle u_0, \phi \rangle_\cS - \frac{1}{2}\int_0^t\langle u_s, {\Delta}\phi \rangle_\cS\, ds , \\
N_t(\psi) &:= \langle v_t, \psi \rangle_\cS - \langle v_0, \psi \rangle_\cS - \frac{1}{2}\int_0^t\langle v_s, {\Delta}\psi \rangle_\cS\, ds 
\eal\end{align}
are square-integrable martingales with quadratic (co-)variation
\begin{align}\bal\label{eq:discrete MP 1a} 
{}  [ M(\phi),  M(\phi) ]_t  = [ N(\phi), N(\phi) ]_t &= \langle L_t,\phi^2\rangle_\cS, 
  \\
 [  M(\phi) , N(\psi) ]_t  & = \rho \, \langle L_t,\phi\psi\rangle_\cS
\eal\end{align}
for all $t>0$. 
\item[b)][Green function representation]\\
For all $T>0$ and $\phi,\psi\in\bigcup_{\lambda>0}\calC_{-\lambda}(\cS)$ (resp.\ $\bigcup_{\lambda>0}\calC_\lambda(\cS)$) we have that
\begin{equation}\label{eq:discrete GFR 1}
\langle u_t,{S}_{T-t}\phi\rangle_\cS=\langle u_0, {S}_T\phi\rangle_\cS + {M}_t^T(\phi),\qquad \langle v_t,{S}_{T-t}\psi\rangle_\cS=\langle v_0, {S}_T\psi\rangle_\cS + {N}_t^T(\psi)
\end{equation}
for $t\in[0,T]$, where $({M}_t^T(\phi))_{t\in[0,T]}$ and $({N}_t^T(\psi))_{t\in[0,T]}$ are square-integrable martingales
with quadratic (co-)variation
\begin{align}\bal\label{eq:discrete GFR 1a} 
{}  [ M^T(\phi),  M^T(\phi) ]_t  = [ N^T(\phi), N^T(\phi) ]_t &= \int_{[0,t]\times\cS}(S_{T-r}\phi(x))^2\,L(dr,dx), 
  \\
 [  M^T(\phi) , N^T(\psi) ]_t  & = \rho \, \int_{[0,t]\times\cS}S_{T-r}\phi(x)S_{T-r}\psi(x)\,L(dr,dx)
\eal\end{align}
for $t\in[0,T]$. In particular, we have the uniform second moment bound
\begin{align}\label{eq:discrete GFR 2}\bal
\E_{u_0,v_0}\bigg[\sup_{t\in[0,T]}|M_t^T(\phi)|^2\bigg]\le\frac{4}{ |\rho|}\left\langle \phi\otimes\phi, ({S}_T^\ssup{2} - {\tilde S}_T)(u_0\otimes v_0)\right\rangle_{\cS^2},
\eal\end{align}
and analogously for $N^T(\psi)$.
\item[c)] 
The quadratic variation of the (complex-valued) martingale $\big(\tilde M_t(\phi,\psi)\big)_{t\ge0}$ in \eqref{MP8} is given by
\begin{align}\label{qv2}\bal
&\left[ M_\cdot(\phi,\psi), \overline{ M_\cdot(\phi,\psi)}\right]_t\\
&\qquad=4(1-\rho^2)\, \int_{[0,t]\times\cS} |F(u_s,v_s,\phi,\psi)|^2\,\left(\phi(x)^2+\psi(x)^2\right)\,L(ds,dx)
\eal\end{align}
for all $t>0$.
\item[d)] We have the following first moment formula for the process $(L_t)_{t\ge0}$: 
For all $t>0$ and $\phi,\psi\in\bigcup_{\lambda>0}\calC^+_{-\lambda}(\cS)$ (resp.\ $\bigcup_{\lambda>0}\calC^+_\lambda(\cS)$),
\begin{align}\label{first moment Lambda discrete equality}\bal%
&\E_{u_0,v_0}\left[\int_{[0,t]\times\cS}{S}_{t-s}\phi(x)\,{S}_{t-s}\psi(x)\,L(ds,dx)\right]
=\frac{1}{ |\rho|}\left\langle \phi\otimes\psi, ({S}_t^\ssup{2} - {\tilde S}_t)(u_0\otimes v_0)\right\rangle_{\cS^2}.
\eal\end{align}
\end{itemize}
\end{prop}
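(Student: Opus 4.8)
The plan is to prove Proposition~\ref{prop:further_properties} by transferring the corresponding properties of the finite rate model $\mathrm{SBM}(\rho,\gamma)$ to the $\gamma\uparrow\infty$ limit, using the uniform $(2+\eps)$-moment bounds from Corollary~\ref{cor:uniform p-th moment 2}. The starting point is that we already know from Proposition~\ref{tightness_MZ} (tightness) together with Proposition~\ref{MPinf 2} and Proposition~\ref{prop:uniqueness} that the family $(u^\sse{\gamma},v^\sse{\gamma},L^\sse{\gamma})$ converges in the Meyer-Zheng topology to a limit $(u,v,L)$ which solves $(\mathbf{MP}_F(\cS))^\rho$ with separation-of-types; so only the \emph{additional} properties a)--d) need to be established. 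The key technical point throughout is that Meyer-Zheng convergence only gives convergence of one-dimensional distributions along almost every time $t$, so in order to pass limits inside martingale relations one uses that uniform integrability supplied by the $(2+\eps)$-moment bounds (which hold since $\rho<0$) upgrades the weak convergence of $\<{u^\sse{\gamma}_t,\phi}_\cS$, $\<{v^\sse{\gamma}_t,\psi}_\cS$ and $\<{L^\sse{\gamma}_t,\phi}_\cS$ to convergence of first and second moments, and of the integrated drift terms.

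For part~a) I would start from the martingale characterization \eqref{eq:discrete MP 1_finite}--\eqref{eq:discrete MP 1a_finite} of the finite rate model. Fix $\phi$; the process $M^\sse{\gamma}_t(\phi)$ is a martingale whose square minus $\<{L^\sse{\gamma}_t,\phi^2}_\cS$ is a martingale, and similarly for the mixed bracket. To identify the limit I would test the martingale property against bounded continuous functionals of the path evaluated at finitely many times $t_1<\dots<t_m<t$ drawn from the full-measure set of continuity times, use the almost sure convergence of the finite-dimensional distributions along that set, and use the uniform $L^{2+\eps}$-bound from Corollary~\ref{cor:uniform p-th moment 2} (applied with $p=2+\eps$, legitimate since $\rho<0$) to justify interchange of limit and expectation for the linear term, the integrated drift $\int_0^t\<{u^\sse{\gamma}_s,\Delta\phi}_\cS\,ds$ (here one also invokes dominated convergence in $s$, using that the set of non-continuity times has Lebesgue measure zero), and the quadratic/bracket term $\<{L^\sse{\gamma}_t,\phi^2}_\cS$. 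This yields that $M_t(\phi)$ and $N_t(\psi)$ from \eqref{eq:discrete MP 1} are square-integrable martingales with the stated brackets. Part~b) is handled identically, replacing the test functions $\phi,\psi$ by the space-time functions $(s,x)\mapsto S_{T-s}\phi(x)$ and starting from the Green function representation \eqref{MP1a dual}--\eqref{Cov1a dual}; the uniform second moment bound \eqref{eq:discrete GFR 2} then follows by combining the identity for the bracket with Lemma~\ref{lemma boundedness quadratic variation} and the Doob/BDG inequality. Part~c) is the same argument applied to the complex martingale $\tilde M_t(\phi,\psi)$ from \eqref{MP8}: one computes its quadratic variation for finite $\gamma$ (a routine Itô computation using \eqref{eq:discrete MP 1a_finite} and the explicit form of $F$, giving a bracket of the form $4(1-\rho^2)\int |F|^2(\phi^2+\psi^2)\,dL^\sse{\gamma}$) and passes to the limit; here the $(2+\eps)$-moments are again what gives uniform integrability, noting $|F|\le1$ bounds part of the integrand. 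Part~d) is then immediate: it is the statement that the inequality in Lemma~\ref{lem:moments 2} (equivalently the limit in Lemma~\ref{lemma boundedness quadratic variation}) is an \emph{equality}, which follows by writing \eqref{first moment Lambda discrete equality} as the bracket of the Green-function martingale $M^T(\phi)$ at $T=t$ evaluated via \eqref{eq:discrete GFR 1a}, taking expectations, and using that $\E[\<{u_t,S_{0}\phi}_\cS]=\<{u_0,S_t\phi}_\cS$ together with the explicit second moment formula \eqref{second moment 2} of Proposition~\ref{prop:moments} --- or, more directly, by noting that the monotone convergence in \eqref{Cov2 dual} combined with the fact that $\E[\<{L^\sse{\gamma}_t,\cdot}]$ is increasing in $\gamma$ and that the limiting measure $L$ satisfies $\E[\<{L_t,\phi}]=\lim_\gamma\E[\<{L^\sse{\gamma}_t,\phi}]$ (again by uniform integrability) forces equality.

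The main obstacle I expect is the interaction between the weakness of the Meyer-Zheng topology and the need to pass limits in nonlinear (quadratic, and exponential-in-$F$) functionals: Meyer-Zheng convergence does not control the paths at every fixed time, and it does not directly give joint convergence of $(u^\sse{\gamma}_t,v^\sse{\gamma}_t,L^\sse{\gamma}_t)$ at a prescribed $t$. The remedy --- and the step requiring the most care --- is to work consistently on the Lebesgue-full set $T_0\subseteq(0,\infty)$ of times at which all marginals converge, to verify the martingale identities only for time points in $T_0$ (which suffices by right-continuity of the limit and a standard approximation), and to control all the error terms by the uniform $L^{2+\eps}$-estimates. Establishing equality in part~d), i.e. ruling out a strict loss of mass in $L$ in the limit, is the other delicate point; here one really does need the \emph{matching} upper bound (Lemma~\ref{lem:moments 2}) and lower bound, the lower bound coming from the fact that $\E[\<{L^\sse{\gamma}_t,\phi}]$ is monotone in $\gamma$ by the comparison for $L^\sse{\gamma}$ and converges up to the right-hand side of \eqref{Cov2 dual}, so no mass can be lost.
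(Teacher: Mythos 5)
Your proposal tracks the paper's proof closely: both rely on the uniform $(2+\eps)$-moment bounds of Corollary~\ref{cor:uniform p-th moment 2} (valid since $\rho<0$) to upgrade Meyer-Zheng convergence to $L^2$-convergence of the martingale terms and $L^1$-convergence of the bracket/increasing-process terms, thereby transferring the finite-rate martingale and covariation identities \eqref{eq:discrete MP 1_finite}--\eqref{eq:discrete MP 1a_finite}, \eqref{MP1a dual}--\eqref{Cov1a dual} to the limit and identifying the brackets with $L$; part~c) is the same argument applied to the exponential martingale, and part~d) follows by passing to the limit in the monotone identity \eqref{Cov2 dual}.

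Two small caveats. First, you treat the convergence of the full triple $(u^\sse{\gamma},v^\sse{\gamma},L^\sse{\gamma})$ as already established from Propositions~\ref{tightness_MZ}, \ref{MPinf 2} and~\ref{prop:uniqueness}, but those results only give tightness of the triple and uniqueness of the $(u,v)$-marginal; the paper necessarily starts from a \emph{subsequential} limit point of $L^\sse{\gamma}$ and obtains convergence of $L^\sse{\gamma}$ only as a corollary of part~a), since the bracket identity \eqref{eq:discrete MP 1a} is precisely what pins $L$ down from $(u,v)$. Second, your first suggested route for part~d) invokes the second-moment formula \eqref{second moment 2} of Proposition~\ref{prop:moments}; in the paper's logical ordering that formula is derived \emph{from} parts b) and d), so as stated this route is circular (it could be repaired by deriving \eqref{second moment 2} directly from the finite-rate moment duality, which the paper mentions as an alternative). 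Your second suggested route --- equality in \eqref{Cov2 dual} forced by the monotonicity in $\gamma$ together with uniform integrability of $\langle L^\sse{\gamma}_t,\cdot\rangle$ --- is exactly the paper's argument and is the one you should keep.
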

\begin{proof}
We give the proof for $(u_0,v_0)\in\calB^+_\tem(\cS)^2$. By Prop.\ \ref{tightness_MZ}, we know that the family $\{(u_t^\sse{\gamma},v_t^\sse{\gamma},L_t^\sse{\gamma})_{t\ge0}: \gamma>0\}$ is tight with respect to the Meyer-Zheng topology on $D_{[0,\infty)}(\calM_\tem(\cS)^3)$.
Suppose that $(u_t,v_t,L_t)_{t\ge0}\in D_{[0,\infty)}(\calM_{\tem}(\cS)^3)$ is any limit point. 

First of all, we note from the proof of Lemma \ref{lem:DDS2} that for $\eps=\eps(\rho)>0$ sufficiently close to zero we have
\[\sup_{\gamma>0}\E_{u_0,v_0}\left[\bigg(\int_{[0,T]\times\cS}(S_{T-s}\phi(x))^2\,L^\sse{\gamma}(ds,dx)\bigg)^{(2+\eps)/2}\right]<\infty\]
for each $T>0$ and $\phi\in\bigcup_{\lambda>0}\calC_\lambda(\cS)$, 
see in particular \eqref{key_estimate} (with $p=2+\eps$ chosen such that $\rho+\cos(\tfrac{\pi}{2+\eps})<0$).
Using the lower bound \eqref{estimate 1a} in Lemma \ref{lemma estimates}a), we get also
\begin{align}\label{key_estimate_2}\bal
&\sup_{\gamma>0}\E_{u_0,v_0}\left[\left\langle L_T^\sse{\gamma},\phi^2\right\rangle_\cS^{(2+\eps)/2}\right]\\
&\quad\le C(\phi,\lambda,T)\,\sup_{\gamma>0}\E_{u_0,v_0}\left[\bigg(\int_{[0,T]\times\cS}(S_{T-s}\phi_\lambda(x))^2\,L^\sse{\gamma}(ds,dx)\bigg)^{(2+\eps)/2}\right]<\infty
\eal\end{align}
for all $T>0$, $\phi\in\bigcup_{\lambda>0}\calC_\lambda(\cS)$ and a suitable $\lambda=\lambda(\rho)>0$.

Now let $(M^\sse{\gamma}(\phi), N^\sse{\gamma}(\phi))$ be the martingales corresponding to \eqref{eq:discrete MP 1} 
for finite $\gamma>0$, as defined in \eqref{eq:discrete MP 1_finite}.
Then (along a subsequence which we do not distinguish in notation) $(M_t^\sse{\gamma}(\phi), N_t^\sse{\gamma}(\phi))_{t\ge0}$ converges to $(M_t(\phi), N_t(\phi))_{t\ge0}$ w.r.t.\ Meyer-Zheng, 
and for each $t>0$ we have by \eqref{eq:discrete MP 1a_finite} and \eqref{uniform first moments dual process 2} that
\[\sup_{\gamma>0}\E_{u_0,v_0}\left[|M^\sse{\gamma}_t(\phi)|^2\right]=\sup_{\gamma>0}\E_{u_0,v_0}\left[\langle L^\sse{\gamma}_t, \phi^2\rangle_\cS\right]<\infty.\]
Applying \cite[Thm.\ 11]{MZ84} we get that the limit $(M(\phi),N(\phi))$ is again a martingale. 
But in fact, by the Burkholder-Davis-Gundy inequality and \eqref{key_estimate_2} we know even that for $\eps=\eps(\rho)>0$ small enough we have
\[\sup_{\gamma>0}\E_{u_0,v_0}\left[|M^\sse{\gamma}_t(\phi)|^{2+\eps}\right]<\infty,\]
for all $t>0$. 
Thus $M_t^\sse{\gamma}(\phi)$ converges to $M_t(\phi)$ in $L^2$. Consequently, we get for the quadratic variation of the martingales that $[M^\sse{\gamma}(\phi), M^\sse{\gamma}(\phi)]_t=\langle L_t^\sse{\gamma},\phi^2\rangle_\cS$ converges to $[M(\phi), M(\phi)]_t$ in $L^1$, 
and we conclude that $[M(\phi), M(\phi)]_t=\langle L_t,\phi^2\rangle_\cS$. The proof for the covariation is the same. This is a).

Note that this implies in particular that the limit point $(L_t)_{t\ge0}$ of the family $\{(L_t^\sse{\gamma})_{t\ge0}:\gamma>0\}$ is unique, since it is characterized by the covariation structure \eqref{eq:discrete MP 1a} of the martingales in \eqref{eq:discrete MP 1} which in turn are uniquely determined by $(u_t,v_t)_{t\ge0}$, for which we have uniqueness by Prop.\ \ref{prop:uniqueness}. 
Thus we have now also proved that $L^\sse{\gamma}$ converges in $D_{[0,\infty)}(\cM_\tem(\R))$ w.r.t.\ the Meyer-Zheng topology, as in the statement of the proposition. 

The proof of the Green function representation in b) is very similar to a), using the corresponding property of the finite rate model and 
uniform boundedness of $(2+\eps)$-th moments of the martingales $(M^\sse{\gamma,T}(\phi),N^\sse{\gamma,T}(\phi))$ from \eqref{MP1a dual}-\eqref{Cov1a dual},
which again follows from \eqref{key_estimate_2}. 
The uniform second moment bound \eqref{eq:discrete GFR 2} is a direct consequence of \eqref{eq:discrete GFR 1}-\eqref{eq:discrete GFR 1a}  and the Burkholder-Davis-Gundy inequality.

Similarly, for c) we use that the analogue of formula \eqref{qv2} holds for the finite rate model, as a straightforward calculation using It\^{o}'s formula shows. Then again, estimate \eqref{key_estimate_2} allows us to extend the formula to the limit.

Finally, for d) we note that \eqref{key_estimate_2} allows us to pass to the limit in formula 
\eqref{Cov2 dual}, yielding the first moment expression \eqref{first moment Lambda discrete equality} for $L$. 
\end{proof}

Now the moment properties in Proposition \ref{prop:moments} can be proved easily:
\begin{proof}[Proof of Proposition~\ref{prop:moments}] 
Let $\rho\in(-1,0)$. Then we can choose $\eps=\eps(\rho)>0$ sufficiently small such that $\rho+\cos(\frac{\pi}{2+\eps})<0$. Applying  Corollary \ref{cor:uniform p-th moment 2} with $p=2+\eps$,
the families $\left\{\<{ u^\sse{\gamma}_t, \phi}_\cS^2:\gamma>0\right\}$ and $\left\{\<{ u^\sse{\gamma}_t, \phi}_\cS^2:\gamma>0\right\}$ are uniformly integrable, hence (by H\"older's inequality) also $\left\{\<{ u^\sse{\gamma}_t, \phi}_\cS\<{ v^\sse{\gamma}_t, \psi}_\cS:\gamma>0\right\}$. 
Together with convergence of the finite rate processes to the infinite rate process $(u_t,v_t)_{t\ge0}$, we get
\[\E_{u_0,v_0}\left[\<{ u_t, \phi}_\cS\<{ v_t, \psi}_\cS\right]=\lim_{\gamma\uparrow\infty}\E_{u_0,v_0}\left[\<{ u^\sse{\gamma}_t, \phi}_\cS\<{ v^\sse{\gamma}_t, \psi}_\cS\right]=\big\langle \phi\otimes\psi, {\tilde S}_t(u_0\otimes v_0)\big\rangle_{\cS^2},
\]
where the last equality follows directly by taking the limit in the finite rate moment duality from \cite{EF04} and the definition of the semigroup $({\tilde S}_t)_{t\ge0}$. 
This proves the mixed second moment formula \eqref{second mixed moment 2}. 
The second moment formulae \eqref{second moment 2} can be derived in a similar way from the finite rate moment duality. 
Alternatively, it follows also directly from Prop.\ \ref{prop:further_properties} b) and d), by putting $t=T$ in the Green function representation \eqref{eq:discrete GFR 1}-\eqref{eq:discrete GFR 1a} and using the covariance structure of the martingales together with the first moment formula \eqref{first moment Lambda discrete equality} for $L$.

Finally, the bound \eqref{finite p-moment} on $p$-th moments follows by an application of Fatou's lemma upon letting $\gamma\uparrow\infty$ in Corollary \ref{cor:uniform p-th moment 2}. 
Thus we have now fully proved Proposition \ref{prop:moments}.
\end{proof}

\section{Convergence of the discrete to the continuous model}\label{sec:convergence}

In this section, we prove Theorem \ref{thm:conv}.\ Recall that given initial conditions $(\mu_0,\nu_0)\in\calM_\tem(\R)^2$ (resp.\ $\calM_\rap(\R)^2$) for $\mathrm{cSBM}(\rho,\infty)$, we define $(\mu_t^\ssup{n},\nu_t^\ssup{n})_{t\ge0}$ by \eqref{defn:variabel initial conditions}-\eqref{defn:rescaling}, and our goal is to show that $(\mu^\ssup{n}_t,\nu^\ssup{n}_t)_{t\ge0}\xrightarrow{d}(\mu_t,\nu_t)_{t\ge0}$, $n\to\infty$, as measure-valued processes, where $(\mu_t,\nu_t)_{t\ge0}$ denotes the (unique) solution to $\mathrm{cSBM}(\rho,\infty)_{\mu_0,\nu_0}$ from Theorem \ref{thm:MPinf_discrete} with $\cS=\R$.
The general strategy is familiar and similar to Section \ref{sec:discrete}: First we prove tightness, 
then we show that limit points solve the martingale problem $(\mathbf{MP}_F(\R))_{\mu_0,\nu_0}^\rho$ from Definition \ref{defn:MP'} and the separation-of-types property \eqref{eq:csep}.

The proof will consist of a series of lemmas and propositions. We begin with some preliminaries. Note that if $\phi\in\bigcup_{\lambda>0}\calB_\lambda(\R)$ (resp.\ $\bigcup_{\lambda>0}\calB_{-\lambda}(\R)$) is a test function and for each $n\in\N$ we define a function $\phi^\ssup{n}:\Z\to\R$ by
\begin{equation}\label{eq:phi^n}
\phi^\ssup{n}(k) := \frac{1}{n}\phi(k/n),\qquad k\in\Z,
\end{equation}
then $\phi^\ssup{n}\in\bigcup_{\lambda>0}\calB_\lambda(\Z)$ (resp.\ $\bigcup_{\lambda>0}\calB_{-\lambda}(\Z))$ and by \eqref{defn:rescaling}
\begin{equation}\label{eq:rescaling}
 \left\langle \mu_t^\ssup{n},  \phi \right\rangle_\R 
= \left\langle u^\ssup{n}_{n^2t},\phi^\ssup{n}\right\rangle_\Z, \qquad t\ge0,
\end{equation}
and analogously for $\nu_t^\ssup{n}$. 
Recall that for each $n\in\N$, we suppose that $(u^\ssup{n}_t,v^\ssup{n}_t)_{t\ge0}$ is the unique solution to the martingale problem $(\mathbf{MP}_F(\Z))^\rho_{u_0^\ssup{n},v_0^\ssup{n}}$ by Theorem \ref{thm:MPinf_discrete}, with initial conditions defined by \eqref{defn:variabel initial conditions}. 
Thus there exists an increasing c\`adl\`ag $\calM_\tem(\Z)$-valued (resp.\ $\cM_\rap(\Z)$-valued)
process $(L_t^\ssup{n})_{t\ge0}$ satisfying the properties \eqref{finiteness Lambda 1} and \eqref{MP9} of Definition \ref{defn:MP'}. 
In fact, we know by Theorem \ref{thm:MPinf_discrete}b) that $(u_t^\ssup{n},v_t^\ssup{n})_{t\ge0}$ is given as the $\gamma\uparrow\infty$-limit of the finite rate symbiotic branching processes $\mathrm{dSBM}(\rho,\gamma)_{u_0^\ssup{n},v_0^\ssup{n}}$.
Moreover, by Proposition \ref{prop:further_properties} we can take $L^\ssup{n}$ as the $\gamma\uparrow\infty$-limit
of the corresponding finite rate increasing processes defined in \eqref{defn:L^gamma}, for which we have the first moment formula \eqref{first moment Lambda discrete equality}. 

For each $n\in\N$, we define an increasing c\`adl\`ag process $(\Lambda_t^\ssup{n})_{t\ge0}$ taking values in $\calM_\tem(\R)$ (resp.\ $\cM_\rap(\R)$) by
\begin{equation}\label{eq:Lambda^n}
\langle\Lambda_t^\ssup{n},\phi\rangle_\R := \frac{1}{n^2}\left\langle L^\ssup{n}_{n^2t},\phi(\tfrac{\cdot}{n})\right\rangle_\Z =\frac{1}{n}\left\langle L^\ssup{n}_{n^2t},\phi^\ssup{n}\right\rangle_\Z, \qquad t\ge0
\end{equation}
for each $\phi\in\bigcup_{\lambda>0}\calB_\lambda(\R)$ (resp.\ $\bigcup_{\lambda>0}\calB_{-\lambda}(\R)$). 
Observe that the measure $\Lambda_t^\ssup{n}$ is concentrated on the scaled lattice $\frac{1}{n}\Z$, with $\frac{1}{n^2} L_{n^2t}^\ssup{n}$ as density w.r.t.\ counting measure.

In the following, we will need to distinguish the discrete-space versions of the semigroups and generators employed in Sections \ref{intro}-\ref{sec:discrete} from their continuous-space counterparts.
Therefore, from now on we shall use the notations $({^dS}_t)_{t\ge0}$ and $^d\Delta$ for the discrete heat semigroup on the lattice $\Z$ and its generator, the discrete Laplacian. Moreover,  we will write $({^dS}_t^\ssup{2})_{t\ge0}$ for the corresponding two-dimensional heat semigroup on $\Z^2$ and $({^d\tilde S}_t)_{t\ge0}$ for the discrete version of the killed semigroup introduced in \eqref{eq: tilde S discrete}. The symbols $S_t$, $\Delta$, $S_t^\ssup{2}$ and $\tilde S_t$ will now be reserved for the continuous-space versions of the above.

\subsection{Tightness}

We start with a lemma showing that second moments of $(\mu^\ssup{n},\nu^\ssup{n})$ and first moments of $\Lambda^\ssup{n}$ are bounded uniformly in $n\in\N$.

\begin{lemma}\label{lemma tightness}
Suppose $\rho \in (-1, 0)$ and $( \mu_0,  \nu_0 ) \in \calM_\tem(\R)^2$ (resp.\ $\calM_\rap(\R)^2$). Then we have for all $T > 0$ and $\phi\in\bigcup_{\lambda>0}\calC_\lambda^+(\R)$ (resp.\ $\bigcup_{\lambda>0}\calC_{-\lambda}^+(\R)$) that
\begin{equation}\label{eq:uniform bound}
 \sup_{n\in\N}\E_{\mu_0^\ssup{n},\nu^\ssup{n}_0}\Big[\sup_{t\in[0,T]}\left\langle\phi, {\mu}^\ssup{n}_t\right\rangle_\R^{2}\Big]<\infty,\qquad  \sup_{n\in\N}\E_{\mu_0^\ssup{n},\nu^\ssup{n}_0}\Big[\sup_{t\in[0,T]}\left\langle\phi, {\nu}^\ssup{n}_t\right\rangle_\R^2\Big]<\infty,
 \end{equation}
and
\begin{equation}\label{eq:uniform bound 2}
 \sup_{n\in\N}\E_{\mu_0^\ssup{n},\nu^\ssup{n}_0}\Big[\sup_{t\in[0,T]}\left\langle\phi, {\Lambda}^\ssup{n}_t\right\rangle_\R\Big]<\infty.
 \end{equation}
\end{lemma}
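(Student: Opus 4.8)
\textbf{Proof plan for Lemma~\ref{lemma tightness}.}

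The plan is to reduce the statement about the rescaled infinite-rate processes $(\mu^\ssup{n},\nu^\ssup{n},\Lambda^\ssup{n})$ to the uniform bounds already available for the \emph{finite}-rate discrete models, via the rescaling identities \eqref{eq:rescaling} and \eqref{eq:Lambda^n}. First I would fix $T>0$ and a test function $\phi\in\bigcup_{\lambda>0}\calC_\lambda^+(\R)$ (resp.\ $\bigcup_{\lambda>0}\calC_{-\lambda}^+(\R)$), and record that by \eqref{eq:rescaling} we have $\langle\mu_t^\ssup{n},\phi\rangle_\R=\langle u^\ssup{n}_{n^2t},\phi^\ssup{n}\rangle_\Z$ with $\phi^\ssup{n}(k)=\tfrac1n\phi(k/n)$, so that
\[
\sup_{t\in[0,T]}\langle\mu_t^\ssup{n},\phi\rangle_\R^2=\sup_{s\in[0,n^2T]}\langle u^\ssup{n}_{s},\phi^\ssup{n}\rangle_\Z^2.
\]
Since $(u^\ssup{n}_t,v^\ssup{n}_t)_{t\ge0}=\mathrm{dSBM}(\rho,\infty)_{u_0^\ssup{n},v_0^\ssup{n}}$ is by Theorem~\ref{thm:MPinf_discrete}b) the $\gamma\uparrow\infty$-limit of the finite-rate processes $(u^\ssup{n,\gamma}_t,v^\ssup{n,\gamma}_t)_{t\ge0}$, and since by Corollary~\ref{cor:uniform p-th moment 2} (applied with $p=2+\eps$, $\eps=\eps(\rho)>0$ small enough that $\rho+\cos(\tfrac{\pi}{2+\eps})<0$) the family of finite-rate second moments is uniformly integrable, an application of Fatou's lemma gives that the same moment estimate of Lemma~\ref{lem:DDS2} (with $p=2$) holds for the infinite-rate process $u^\ssup{n}$ itself; this is exactly the content of Proposition~\ref{prop:further_properties}(b), which supplies the uniform second moment bound \eqref{eq:discrete GFR 2} for the Green-function martingales. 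So I would invoke Proposition~\ref{prop:further_properties}(b) directly: with $T'=n^2T$ and $\psi=\phi^\ssup{n}$,
\[
\E_{u_0^\ssup{n},v_0^\ssup{n}}\Big[\sup_{s\in[0,n^2T]}|M^{n^2T}_s(\phi^\ssup{n})|^2\Big]\le\frac{4}{|\rho|}\big\langle\phi^\ssup{n}\otimes\phi^\ssup{n},({^dS}^\ssup{2}_{n^2T}-{^d\tilde S}_{n^2T})(u_0^\ssup{n}\otimes v_0^\ssup{n})\big\rangle_{\Z^2},
\]
combined with the decomposition \eqref{eq:discrete GFR 1} and the lower bound $\langle u^\ssup{n}_s,\phi^\ssup{n}\rangle_\Z\ge\langle u^\ssup{n}_s,{^dS}_{n^2T-s}\phi^\ssup{n}\rangle_\Z\ge$ (something controllable) — more precisely I would use that $\langle u^\ssup{n}_s,\phi^\ssup{n}\rangle_\Z\le\langle u_0^\ssup{n},{^dS}_{n^2T}\phi^\ssup{n}\rangle_\Z+\sup_s|M^{n^2T}_s(\phi^\ssup{n})|$, replacing $\phi$ by $\phi_\lambda$ and using the exponential-moment bound on the random walk semigroup from Lemma~\ref{lemma estimates}a)/Lemma~\ref{lem:uniform preservation} as in Corollary~\ref{cor:uniform p-th moment 2}.

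The main point is then that the right-hand sides of these estimates remain bounded uniformly in $n$. This is where the diffusive rescaling must be matched against the initial conditions. I would compute, using $\phi^\ssup{n}(k)=\tfrac1n\phi(k/n)$ and the definition \eqref{defn:variabel initial conditions} of $(u_0^\ssup{n},v_0^\ssup{n})$ (so that $u_0^\ssup{n}$ is the "histogram density'' of $\mu_0$ on $\tfrac1n\Z$), that
\[
\langle u_0^\ssup{n},{^dS}_{n^2T}\phi^\ssup{n}\rangle_\Z\longrightarrow\langle\mu_0,S_T\phi\rangle_\R,
\]
which is the standard fact that the rescaled simple random walk semigroup $\tfrac1{n^2}{^d\Delta}$ (run for time $n^2T$, space $\tfrac1n\Z$) converges to the Brownian semigroup; combined with the exponential tail control on $\mu_0,\nu_0\in\calM_\tem(\R)$ (resp.\ $\calM_\rap$) this yields a bound uniform in $n$. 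Similarly $\tfrac1{n^2}\langle\phi^\ssup{n}\otimes\phi^\ssup{n},{^dS}^\ssup{2}_{n^2T}(u_0^\ssup{n}\otimes v_0^\ssup{n})\rangle_{\Z^2}\to\langle\phi\otimes\phi,S^\ssup{2}_T(\mu_0\otimes\nu_0)\rangle_{\R^2}$, and since the killed-semigroup term ${^d\tilde S}$ only reduces the quantity, the whole right-hand side of the analogue of \eqref{eq:discrete GFR 2} is bounded uniformly in $n$. Tracking scaling factors: the $\tfrac1n$ in $\phi^\ssup{n}$ and the $\tfrac1n$ in $u_0^\ssup{n}=n\mu_0([\tfrac kn,\tfrac{k+1}n))$ cancel against the counting-measure normalisation exactly so that $\langle u_0^\ssup{n},\phi^\ssup{n}\rangle_\Z\to\langle\mu_0,\phi\rangle_\R$; this bookkeeping is the routine but slightly delicate part. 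This proves \eqref{eq:uniform bound}. For \eqref{eq:uniform bound 2} I would argue identically but starting from the first-moment formula \eqref{first moment Lambda discrete equality} for $L^\ssup{n}$ applied at time $n^2T$ with test function $\phi^\ssup{n}$, and from the definition \eqref{eq:Lambda^n} of $\Lambda^\ssup{n}$: one gets
\[
\E_{u_0^\ssup{n},v_0^\ssup{n}}\big[\langle\Lambda^\ssup{n}_T,\phi\rangle_\R\big]\le\tfrac1{n^2}\E[\langle L^\ssup{n}_{n^2T},\phi^\ssup{n}\rangle_\Z]\le\tfrac{1}{n^2|\rho|}\big\langle\phi^\ssup{n}\otimes\mathbf1,({^dS}^\ssup{2}_{n^2T}-{^d\tilde S}_{n^2T})(u_0^\ssup{n}\otimes v_0^\ssup{n})\big\rangle,
\]
(with suitable replacement of $\mathbf1$ by $\phi_\lambda$ to keep things tempered), whose right-hand side converges to $\tfrac1{|\rho|}\langle\phi\otimes\phi_\lambda,(S^\ssup{2}_T-\tilde S_T)(\mu_0\otimes\nu_0)\rangle_{\R^2}<\infty$ and is hence $n$-uniformly bounded; since $t\mapsto\langle\Lambda^\ssup{n}_t,\phi\rangle_\R$ is increasing, the expectation of the supremum over $[0,T]$ equals the expectation at $T$, giving \eqref{eq:uniform bound 2}.

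The main obstacle I anticipate is not conceptual but the uniform-in-$n$ control of the convergence of the discrete semigroup quantities: one must ensure that $\tfrac1{n^2}\langle\phi^\ssup{n}\otimes\phi^\ssup{n},{^dS}^\ssup{2}_{n^2T}(u_0^\ssup{n}\otimes v_0^\ssup{n})\rangle_{\Z^2}$ is bounded \emph{uniformly} in $n$ and not just convergent, which requires a uniform heat-kernel estimate on $\tfrac1n\Z$ (e.g.\ a Gaussian upper bound for the rescaled random walk kernel, uniform in $n$) together with the tempered/rapidly-decreasing bound on $\mu_0\otimes\nu_0$; these should follow from the semigroup estimates collected in Appendix~\ref{sec:semigroup}, but the details of the interchange of the $\gamma\uparrow\infty$ limit (which is hidden inside Proposition~\ref{prop:further_properties}) with the $n\to\infty$ considerations need to be stated carefully. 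A secondary technical point is that Corollary~\ref{cor:uniform p-th moment 2} and Lemma~\ref{lem:DDS2} are phrased for fixed initial conditions, whereas here the initial conditions $(u_0^\ssup{n},v_0^\ssup{n})$ vary with $n$; but since the constants $C(p,\lambda,T)$ there depend only on $p,\lambda,T$ and not on the initial data, this causes no difficulty — one only needs the $n$-uniform boundedness of the explicit expectation $\E_{\langle u_0^\ssup{n},{^dS}_{n^2T}\phi_\lambda\rangle,\langle v_0^\ssup{n},{^dS}_{n^2T}\phi_\lambda\rangle}[\tau^{p/2}]$, which again reduces to the convergence $\langle u_0^\ssup{n},{^dS}_{n^2T}\phi_\lambda\rangle_\Z\to\langle\mu_0,S_T\phi_\lambda\rangle_\R<\infty$ and continuity of $x,y\mapsto\E_{x,y}[\tau^{p/2}]$.
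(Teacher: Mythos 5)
Your proposal matches the paper's proof: both rely on the Green function representation from Proposition~\ref{prop:further_properties}(b) (with the second moment bound \eqref{eq:discrete GFR 2}), the uniform discrete semigroup comparison of Lemma~\ref{lem:uniform preservation}, and the semigroup convergence of Lemma~\ref{lem:semigroup convergence} to obtain the $n$-uniform bounds, and both handle $\Lambda^\ssup{n}$ via the first moment formula \eqref{first moment Lambda discrete equality} together with the observation that the process is increasing. The small slips in your write-up (a reversed inequality direction in the intermediate display, the missing multiplicative constant $C$ from Lemma~\ref{lem:uniform preservation} in the bound $\langle u^\ssup{n}_s,\phi^\ssup{n}\rangle_\Z\le C(\langle u_0^\ssup{n},{^dS}_{n^2T}\phi^\ssup{n}\rangle_\Z+\sup_s|M^{n^2T}_s(\phi^\ssup{n})|)$, and the placeholder $\mathbf{1}$ which should be $\phi_{\lambda/2}^\ssup{n}$ in both tensor slots) are essentially self-corrected in your text and do not affect the argument.
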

\begin{proof}
We give the proof for $( \mu_0,  \nu_0 ) \in \calM_\tem(\R)^2$. Fix $T>0$ and assume w.l.o.g.\ that $\phi=\phi_\lambda$ for some $\lambda>0$. 
By Lemma~\ref{lem:uniform preservation} there is a constant $C=C(\lambda,T)$ independent of $n$ such that 
\begin{equation}\label{lemma tightness proof 00}
\phi_\lambda(\tfrac{k}{n})\le C\,{^dS}_{n^2T-s}(\phi_\lambda(\tfrac{\cdot}{n}))(k)
\end{equation}
for all $s\in[0,n^2T]$, $k\in\Z$ and $n\in\N$.
Using this together with the Green function representation for the discrete model (see Proposition\ \ref{prop:further_properties}b), with $[0,n^2T]$ in place of $[0,T]$),
we get
\begin{align}\label{lemma tightness proof 0}\bal
\left\langle {\mu}^\ssup{n}_t, \phi_\lambda\right\rangle_\R = \left\langle u^\ssup{n}_{n^2t},\phi_\lambda^\ssup{n}\right\rangle_\Z  
&\le C\,\left\langle u^\ssup{n}_{n^2t},{^dS}_{n^2(T-t)}\phi_\lambda^\ssup{n}\right\rangle_\Z \\
&=C\left(\left\langle u_0^\ssup{n}, {^dS}_{n^2T}\phi_\lambda^\ssup{n}\right\rangle_\Z + {M}_{n^2t}^{n^2T}(\phi_\lambda^\ssup{n})\right)
\eal\end{align}
for all $t\in[0,T]$,
where the second moment of the martingale term is bounded by 
\begin{align}\bal\label{lemma tightness proof 1}
\E_{u_0^\ssup{n},v^\ssup{n}_0}\Big[\sup_{t\in[0,T]}&\big|M_{n^2t}^{n^2T}(\phi_\lambda^\ssup{n})\big|^2\Big]
\\
&\le \frac{4}{ |\rho|}\left\langle \phi_\lambda^\ssup{n}\otimes\phi_\lambda^\ssup{n}, \big({^dS}_{n^2T}^\ssup{2} - {^d\tilde S}_{n^2T}\big)(u_0^\ssup{n}\otimes v_0^\ssup{n})\right\rangle_{\Z^2} \\
&\le\frac{4}{ |\rho|}\, \left\langle \phi_\lambda^\ssup{n}, {^d}S_{n^2T}u_0^\ssup{n} \right\rangle_\Z\,\left\langle \phi_\lambda^\ssup{n}, {^dS}_{n^2T}v_0^\ssup{n} \right\rangle_\Z
\eal\end{align}
for all $n\in\N$ (recall estimate \eqref{eq:discrete GFR 2}). 
Combining \eqref{lemma tightness proof 0}-\eqref{lemma tightness proof 1} with Lemma \ref{lem:semigroup convergence} in the appendix, 
the first inequality in \eqref{eq:uniform bound} follows easily, and the proof of the second one is analogous. 

For the increasing process $\Lambda^\ssup{n}$, we observe that 
\[\bal
 \E_{\mu_0^\ssup{n},\nu^\ssup{n}_0}\Big[\sup_{t\in[0,T]}&\left\langle\phi_\lambda, {\Lambda}^\ssup{n}_t\right\rangle_\R\Big] 
 =\frac{1}{n^2}\,\E_{u_0^\ssup{n},v^\ssup{n}_0}\left[\left\langle\phi_{\lambda}(\tfrac{\cdot}{n}), L^\ssup{n}_{n^2T}\right\rangle_\Z\right]\\
 &=\frac{1}{n^2}\,\E_{u_0^\ssup{n},v^\ssup{n}_0}\left[\int_{[0,n^2T]\times\Z}\phi_{\lambda/2}(\tfrac{\cdot}{n})(k)^2\, L^\ssup{n}(ds,dk)\right]\\
 &\le \frac{C}{n^2}\,\E_{u_0^\ssup{n},v^\ssup{n}_0}\left[\int_{[0,n^2T]\times\Z}\left({^dS}_{n^2T-s}\phi_{\lambda/2}(\tfrac{\cdot}{n})(k)\right)^2\, L^\ssup{n}(ds,dk)\right]\\
 &= \frac{C}{|\rho|}\left\langle\phi^\ssup{n}_{\lambda/2}\otimes\phi^\ssup{n}_{\lambda/2} , \big({^dS}^\ssup{2}_{n^2T}-{^d\tilde S}_{n^2T}\big)(u_0^\ssup{n}\otimes v_0^\ssup{n}) \right\rangle_{\Z^2},
\eal\]
where we used again estimate \eqref{lemma tightness proof 00} (with $\lambda/2$) for the inequality 
and formula \eqref{first moment Lambda discrete equality} for the last equality. 
Now we see as before in~\eqref{lemma tightness proof 1} that the RHS of the previous display is bounded uniformly in $n\in\N$.
\end{proof}

\begin{corollary}[Compact Containment]\label{cor:comp_cont}
Suppose $\rho \in (-1,0)$ and $ (\mu_0,  \nu_0 ) \in \calM_\tem(\R)^2$ (resp.\ $ \calM_\rap(\R)^2$).
Then the \emph{compact containment condition} holds for the family of processes $\{( \mu^\ssup{n}_t, \nu^\ssup{n}_t,\Lambda^\ssup{n}_t)_{t\ge0}:n\in\N\}$, 
i.e. for every $\eps>0$ and $T>0$ there exists a compact subset $K=K_{\eps,T}\subseteq\calM_\tem(\R)$ (resp.\ $ \calM_\rap(\R)^2$) such that 
\[\inf_{n\in\N}\p \big\{ \mu^\ssup{n}_t \in K_{\eps,T} \mbox{ for all } t \in [0,T] \big\} \ge 1-\eps,\]
and similarly for $ \nu_t^\ssup{n}$ and $ \Lambda_t^\ssup{n}$.
\end{corollary}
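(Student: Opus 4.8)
The plan is to obtain the compact containment condition directly from the uniform moment bounds of Lemma~\ref{lemma tightness}, combined with the standard characterization of (relatively) compact subsets of $\calM_\tem(\R)$ (resp.\ $\calM_\rap(\R)$) recalled in Appendix~\ref{appendix0}: for any sequence $(c_m)_{m\in\N}$ of positive reals, the level set
\[ K_{(c_m)} := \big\{\mu\in\calM_\tem(\R): \langle\mu,\phi_{1/m}\rangle_\R\le c_m \text{ for all } m\in\N\big\} \]
is a compact subset of $\calM_\tem(\R)$, and similarly $\big\{\mu\in\calM_\rap(\R): \langle\mu,\phi_{-m}\rangle_\R\le c_m \text{ for all } m\in\N\big\}$ is a compact subset of $\calM_\rap(\R)$. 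I will carry out the argument for $\mu^\ssup{n}$ in the tempered case; the cases of $\nu^\ssup{n}$ and $\Lambda^\ssup{n}$ are handled in exactly the same way, and the rapidly decreasing case only requires replacing $\phi_{1/m}$ by $\phi_{-m}$ and invoking the corresponding `resp.'\ statements of Lemma~\ref{lemma tightness}.

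First I would fix $\eps>0$, $T>0$ and set, for each $m\in\N$,
\[ a_m := \sup_{n\in\N}\E_{\mu_0^\ssup{n},\nu_0^\ssup{n}}\Big[\sup_{t\in[0,T]}\langle\phi_{1/m},\mu^\ssup{n}_t\rangle_\R^2\Big], \]
which is finite by Lemma~\ref{lemma tightness} (note $\phi_{1/m}\in\bigcup_{\lambda>0}\calC_\lambda^+(\R)$). Choosing $c_m$ so large that $\sum_{m\in\N}a_m c_m^{-2}\le\eps$, Markov's inequality applied to the running supremum yields $\p\{\sup_{t\in[0,T]}\langle\phi_{1/m},\mu^\ssup{n}_t\rangle_\R>c_m\}\le a_m c_m^{-2}$, uniformly in $n$. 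Since the event $\{\mu^\ssup{n}_t\in K_{(c_m)}\text{ for all }t\in[0,T]\}$ is precisely $\bigcap_{m\in\N}\{\sup_{t\in[0,T]}\langle\phi_{1/m},\mu^\ssup{n}_t\rangle_\R\le c_m\}$, a union bound gives
\[ \inf_{n\in\N}\p\big\{\mu^\ssup{n}_t\in K_{(c_m)}\text{ for all }t\in[0,T]\big\}\ge 1-\sum_{m\in\N}a_m c_m^{-2}\ge 1-\eps, \]
which is the claim for $\mu^\ssup{n}$. The argument for $\nu^\ssup{n}$ is identical, using the second bound in~\eqref{eq:uniform bound}. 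For $\Lambda^\ssup{n}$ one uses in addition that $t\mapsto\langle\phi_{1/m},\Lambda^\ssup{n}_t\rangle_\R$ is nondecreasing, so its supremum over $[0,T]$ is attained at $t=T$, and then applies Markov's inequality to first moments, invoking~\eqref{eq:uniform bound 2} in place of~\eqref{eq:uniform bound}.

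The only ingredient here that is not entirely routine is the compactness criterion imported from Appendix~\ref{appendix0}, i.e.\ the fact that the level sets $K_{(c_m)}$ are genuinely compact (and not merely closed) in $\calM_\tem(\R)$ resp.\ $\calM_\rap(\R)$; this is exactly where it matters that the functions $\phi_\lambda$ are the ones generating the respective topologies on these spaces. Everything else --- Markov's inequality, the union bound, and the decomposition of the pathwise containment event over the countable family of exponents --- is elementary. Since the corollary asks for compact containment of each of the three coordinate processes separately, it suffices to run the estimate three times (or, if one prefers a single compact set, to intersect the three sets so obtained).
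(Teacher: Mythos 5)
Your argument is correct and is essentially the one the paper relies on: the paper's proof simply defers to \cite[Cor.\ 3.3]{BHO15}, which runs exactly this scheme of applying Markov's inequality to the running suprema $\sup_{t\le T}\langle\phi_{1/m},\cdot\rangle$ (second moments for $\mu^\ssup{n},\nu^\ssup{n}$ via \eqref{eq:uniform bound}, first moments plus monotonicity for $\Lambda^\ssup{n}$ via \eqref{eq:uniform bound 2}), a union bound over $m$, and the compactness of the level sets $K_{(c_m)}$. The only caveat is that this compactness criterion is not actually stated in Appendix~\ref{appendix0}, so strictly speaking you would need to supply or cite that (standard) fact about $\calM_\tem(\R)$ and $\calM_\rap(\R)$ rather than attribute it to the appendix.
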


\begin{proof}
Given the uniform moment bounds from Lemma \ref{lemma tightness},
the proof is virtually identical to that of \cite[Corollary 3.3]{BHO15}.
\end{proof}

\begin{prop}\label{prop:tightness}
Suppose $\rho \in (-1,0)$ and $( \mu_0,  \nu_0 ) \in \calM_\tem(\R)^2$ (resp.\ $\calM_\rap(\R)^2$). 
Then the family of processes $\{(\mu_t^\ssup{n},\nu_t^\ssup{n},\Lambda_t^\ssup{n})_{t\ge0}:n\in\N\}$ 
is tight with respect to the Meyer-Zheng topology on $D_{[0,\infty)}(\calM_\tem(\R)^3)$ (resp.\ $D_{[0,\infty)}(\calM_\rap(\R)^3)$). 
\end{prop}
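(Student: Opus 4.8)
### Proof proposal

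The plan is to verify the standard Meyer--Zheng tightness criterion (see \cite{MZ84} and the appendix), which for a sequence of c\`adl\`ag processes with values in a nice state space reduces to two ingredients: (i) the \emph{compact containment condition}, and (ii) uniform control of the \emph{conditional variation} of real-valued functionals obtained by testing the processes against fixed test functions. Ingredient (i) is already provided by Corollary~\ref{cor:comp_cont}. So the bulk of the work is ingredient (ii), and the natural way to obtain it is via a semimartingale decomposition of $t\mapsto\langle\mu^\ssup{n}_t,\phi\rangle_\R$, $t\mapsto\langle\nu^\ssup{n}_t,\phi\rangle_\R$ and $t\mapsto\langle\Lambda^\ssup{n}_t,\phi\rangle_\R$, with bounds uniform in $n$.

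First I would fix a test function $\phi\in\bigcup_{\lambda>0}\calC^\ssup{2}_\lambda(\R)$ (resp.\ $\calC^\ssup{2}_{-\lambda}(\R)$) and record the relevant decompositions. For $\Lambda^\ssup{n}$ the process $t\mapsto\langle\Lambda^\ssup{n}_t,\phi\rangle_\R$ is already nondecreasing, so its total variation on $[0,T]$ equals $\langle\Lambda^\ssup{n}_T,\phi\rangle_\R$, which has expectation bounded uniformly in $n$ by \eqref{eq:uniform bound 2}; this immediately gives the required variation control for the $\Lambda^\ssup{n}$-coordinate. For $\langle\mu^\ssup{n}_t,\phi\rangle_\R$ I would use the rescaled martingale problem: starting from \eqref{eq:discrete MP 1}--\eqref{eq:discrete MP 1a} for the discrete model $\mathrm{dSBM}(\rho,\infty)_{u_0^\ssup{n},v_0^\ssup{n}}$, applied to the rescaled test function $\phi^\ssup{n}$ of \eqref{eq:phi^n} and to the time-rescaled process, together with \eqref{eq:rescaling} and \eqref{eq:Lambda^n}, one obtains that
\begin{equation}\label{eq:rescaled MP tightness}
\langle\mu^\ssup{n}_t,\phi\rangle_\R - \langle\mu^\ssup{n}_0,\phi\rangle_\R - \tfrac12\int_0^t \langle\mu^\ssup{n}_s,\Delta_n\phi\rangle_\R\,ds
\end{equation}
is a martingale, where $\Delta_n$ denotes the (rescaled) discrete Laplacian $n^2\,{}^d\!\Delta$ acting through the identification $k\leftrightarrow k/n$; its quadratic variation at time $t$ equals $\langle\Lambda^\ssup{n}_t,(\phi^{(n)})^2\cdot n\rangle$-type expression, which is dominated (after using $(\phi_\lambda)^2\le\phi_{\lambda/2}$ and the finite-variation structure of $\Lambda^\ssup{n}$) by $\langle\Lambda^\ssup{n}_t,\phi_{\lambda/2}\rangle_\R$ up to a constant. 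Since $\Delta_n\phi\to\Delta\phi$ uniformly on compacts (with a tempered/rapidly-decreasing bound uniform in $n$, because $\phi\in\calC^\ssup{2}$), the drift term in \eqref{eq:rescaled MP tightness} has total variation on $[0,T]$ with expectation bounded uniformly in $n$ by the first moment bound \eqref{eq:uniform bound}; and the martingale term has conditional variation controlled in the Meyer--Zheng sense by the $L^1$-bound on its quadratic variation, hence by \eqref{eq:uniform bound 2}. The same argument applies verbatim to $\nu^\ssup{n}$.

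Assembling these pieces: for each of the three coordinate functionals tested against a fixed $\phi$, the process splits as (uniformly bounded-variation part) plus (martingale whose bracket has uniformly bounded expectation), which is exactly the condition under which \cite[Thm.~4]{MZ84} gives tightness of the laws on $D_{[0,\infty)}$ in the pseudo-path topology; combined with the compact containment of Corollary~\ref{cor:comp_cont} this upgrades to tightness in $D_{[0,\infty)}(\calM_\tem(\R)^3)$ (resp.\ $\calM_\rap(\R)^3$), by the characterization of Meyer--Zheng tightness for measure-valued processes recalled in Appendix~\ref{sec:appendix_path} (one tests against a countable separating family of $\phi$'s and uses that the state space is a countable product-type limit). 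I expect the only genuinely delicate point to be the uniform-in-$n$ estimate on the discrete Laplacian term, i.e.\ checking that $\sup_n\E[\sup_{t\le T}\langle\mu^\ssup{n}_s,|\Delta_n\phi|\rangle_\R]<\infty$ and, more subtly, that $\Delta_n\phi$ stays within a single exponential class uniformly in $n$ so that Lemma~\ref{lemma tightness} can be applied to it --- a routine but careful Taylor-expansion argument using $\phi\in\calC^\ssup{2}_{\pm\lambda}(\R)$. Everything else reduces to the moment bounds already established in Lemma~\ref{lemma tightness} and the semigroup convergence lemma in the appendix.
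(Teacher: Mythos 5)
Your proposal is correct and follows essentially the same route as the paper's proof: verify compact containment via Corollary~\ref{cor:comp_cont}, then check the Meyer--Zheng conditional-variation condition~\eqref{MZ condition} for the coordinate processes $t\mapsto\langle\mu^\ssup{n}_t,\phi\rangle_\R$, $\langle\nu^\ssup{n}_t,\phi\rangle_\R$, $\langle\Lambda^\ssup{n}_t,\phi\rangle_\R$ using the same semimartingale decomposition coming from Proposition~\ref{prop:further_properties}a), and finally invoke \cite[Cor.~1.4]{Kurtz91}. The only substantive difference is how the drift term $t\mapsto\tfrac12\int_0^t\langle\mu^\ssup{n}_s,\Delta_n\phi\rangle_\R\,ds$ is controlled: you bound its expected total variation directly using first moments, whereas the paper shows the slightly stronger statement that this term satisfies Kolmogorov's criterion (hence is tight in the Skorokhod topology) via a second-moment estimate with Jensen's inequality. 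Both work, since both reduce to the uniform bound $|\Delta_n\phi|\le C\phi_\lambda$ (Taylor expansion using $\phi\in\calC^\ssup{2}_{\pm\lambda}$) together with Lemma~\ref{lemma tightness}; your version is marginally leaner, the paper's proves a bit more. Likewise you streamline the martingale bound by quoting \eqref{eq:uniform bound 2} directly rather than re-deriving the $\phi_\lambda$/killed-semigroup estimate as the paper does, which is equivalent since that is exactly how \eqref{eq:uniform bound 2} is proved inside Lemma~\ref{lemma tightness}. One small imprecision: the quadratic variation of the rescaled martingale $M_{n^2 t}(\phi^\ssup{n})$ is exactly $\langle\Lambda^\ssup{n}_t,\phi^2\rangle_\R$ (no extra factor of $n$), by \eqref{eq:discrete MP 1a} combined with the scaling identities $(\phi^\ssup{n})^2=\tfrac1{n^2}\phi^2(\cdot/n)$ and \eqref{eq:Lambda^n}; your ``$\cdot\,n$-type'' phrasing garbles this, though your conclusion (domination by $\langle\Lambda^\ssup{n}_t,\phi_{\lambda/2}\rangle_\R$) is still correct.
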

\begin{proof}
Suppose $( \mu_0,  \nu_0 ) \in \calM_\tem(\R)^2$. 
We will apply \cite[Cor.\ 1.4]{Kurtz91} that we recall in Appendix \ref{sec:appendix_path}, see after \eqref{MZ condition} below. 
This criterion  requires us to check the Meyer-Zheng tightness condition \eqref{MZ condition} for the coordinate processes plus a compact containment condition. 
Corollary~\ref{cor:comp_cont} takes care of the latter condition so that 
we only have to check tightness of the coordinate processes. 

Let $\phi \in \calC_\rap^\ssup{2}(\R)^+$ and fix $T>0$. 
By Prop.\ \ref{prop:further_properties}a), we know that 
\begin{align}\label{proof MZ 1}\bal
\left\langle \mu^\ssup{n}_t, \phi\right\rangle_\R &= \left\langle u^\ssup{n}_{n^2t}, \phi^\ssup{n} \right\rangle_\Z \\
&=\left\langle u_0^\ssup{n}, \phi^\ssup{n} \right\rangle_\Z + \frac{1}{2}\int_0^{n^2t}\left\langle u^\ssup{n}_{s}, {^d\Delta}(\phi^\ssup{n}) \right\rangle_\Z\, ds\ + M_{n^2t}\left(\phi^\ssup{n}\right),
\eal\end{align}
where $\left(M_{n^2t}(\phi^\ssup{n})\right)_{t\ge0}$ 
is a martingale with second moments bounded uniformly in $t\in[0,T]$ by
\[\E_{u_0^\ssup{n},v_0^\ssup{n}}\left[\sup_{t\in[0,T]}\left|M_{n^2t}\left(\phi^\ssup{n}\right)\right|^2\right]
\le4\,\E_{u_0^\ssup{n},v_0^\ssup{n}}\left[\left\langle L_{n^2T}^\ssup{n},(\phi^\ssup{n})^2\right\rangle_\Z\right],\]
where we used the Burkholder-Davis-Gundy inequality.
Choosing a suitable $\lambda>0$ and using the lower bound from Lemma \ref{lem:uniform preservation}, 
we see that there is a constant $C=C(\phi,\lambda,T)$ such that the previous display is bounded  by
\[\bal
&C\,\E_{u_0^\ssup{n},v_0^\ssup{n}}\left[ \int_{[0,n^2T]\times\Z} \left({^dS}_{n^2T-s}\phi_\lambda^\ssup{n}(k)\right)^2\,L^\ssup{n}(ds,dk)\right]\\
&=\frac{C}{|\rho|}\left\langle \phi_\lambda^\ssup{n}\otimes\phi_\lambda^\ssup{n}, \big({^dS}_{n^2T}^\ssup{2} - {^d\tilde S}_{n^2T}\big)(u_0^\ssup{n}\otimes v_0^\ssup{n})\right\rangle_{\Z^2}\\
&\le\frac{C}{ |\rho|}\,\left\langle \phi_\lambda^\ssup{n}, {^d}S_{n^2T}u_0^\ssup{n} \right\rangle_\Z\,\left\langle \phi_\lambda^\ssup{n}, {^dS}_{n^2T}v_0^\ssup{n} \right\rangle_\Z,
\eal\]
where we have also used \eqref{first moment Lambda discrete equality}. 
But the last display is bounded uniformly in $n\in\N$ by Lemma \ref{lem:semigroup convergence}, hence we get
\[\sup_{n\in\N}\sup_{t\in[0,T]}\E_{u_0^\ssup{n},v_0^\ssup{n}}\left[\left|M_{n^2t}\left(\phi^\ssup{n}\right)\right|^2\right]
<\infty\]
for all $T>0$. This implies immediately the Meyer-Zheng tightness condition \eqref{MZ condition} for the sequence of martingales in \eqref{proof MZ 1}.

In view of \eqref{proof MZ 1}, it remains to show tightness of the term 
\[X_t^\ssup{n}:=\int_0^{n^2t}\left\langle u^\ssup{n}_{s}, {^d\Delta}(\phi^\ssup{n}) \right\rangle_\Z\, ds=n^2\int_0^t\left\langle u^\ssup{n}_{n^2s}, {^d\Delta}(\phi^\ssup{n}) \right\rangle_\Z\, ds, \qquad t \in [0,T].\]
But Lemma \ref{lemma tightness} implies that this term is tight in the stronger Skorokhod topology, as follows: 
Since $\phi \in \calC_\rap^\ssup{2}(\R)^+$, there is a suitable $\lambda>0$ and some constant $C=C(\phi)$ such that 
$\left|{^d\Delta}(\phi^\ssup{n})(k)\right|\le\tfrac{C}{n^3}\phi_\lambda(\frac{k}{n})=\frac{C}{n^2}\phi_\lambda^\ssup{n}(k)$ for all $k\in\Z$, $n\in\N$. 
Thus we get for $0\le s<t\le T$ that
\begin{align}
\bal
\E_{u_0^\ssup{n},v_0^\ssup{n}}\left[|X_t^\ssup{n}-X_s^\ssup{n}|^2\right]
&\le C(\phi)\,\E_{u_0^\ssup{n},v_0^\ssup{n}}\left[\left(\int_s^t\left\langle u_{n^2r}^\ssup{n},\phi_\lambda^\ssup{n}\right\rangle_\Z\,dr\right)^2\right]\\
&\le C\,(t-s)\,\int_s^t\E_{u_0^\ssup{n},v_0^\ssup{n}}\left[\left\langle u_{n^2r}^\ssup{n},\phi_\lambda^\ssup{n}\right\rangle_\Z^2\right]dr\\
&= C\,(t-s)\,\int_s^t\E_{\mu_0^\ssup{n},\nu_0^\ssup{n}}\left[\left\langle \mu_{r}^\ssup{n},\phi_\lambda\right\rangle_\R^2\right]dr,
\eal\end{align}
where we have used Jensen's inequality in the second step. By Lemma \ref{lemma tightness}, the integrand in the above display is bounded 
uniformly in $n\in\N$ and $r \in [0,T]$, whence we get
\[\E_{u_0^\ssup{n},v_0^\ssup{n}}\left[|X_t^\ssup{n}-X_s^\ssup{n}|^2\right]\le C'\,(t-s)^2,\qquad 0\le s<t\le T,\]
confirming Kolmogorov's tightness criterion for the Laplace term.

This shows that the sequence of coordinate processes $\left(\left\langle\mu^\ssup{n}_t, \phi\right\rangle_\R\right)_{t\ge0}$, $n\in\N$, is tight w.r.t.\ the Meyer-Zheng topology. 
The same argument works for $\left(\left\langle \nu^\ssup{n}_t, \phi\right\rangle_\R\right)_{t\ge0}$. 
For the increasing process $t\mapsto\left\langle \Lambda^\ssup{n}_t, \phi \right\rangle_\R$, condition \eqref{MZ condition} reduces to 
\[\sup_{n\in\N}\E_{ \mu_0^\ssup{n}, \nu_0^\ssup{n}}\left[\left\langle \Lambda^\ssup{n}_T, \phi\right\rangle_\R\right]<\infty\] 
which is also ensured by Lemma \ref{lemma tightness}.

Since $\calC_\rap^\ssup{2}(\R)^+$ separates the points of $\cM_\tem(\R)$ and the compact containment condition holds by Corollary \ref{cor:comp_cont}, an application of \cite[Cor.\ 1.4]{Kurtz91} concludes the proof.
\end{proof}

\subsection{Properties of Limit Points }

In this subsection, we check that limit points $(\mu_t,\nu_t,\Lambda_t)_{t\ge0}$ of the sequence $(\mu^\ssup{n}_t,\nu^\ssup{n}_t,\Lambda^\ssup{n}_t)_{t\ge0}$ satisfy the martingale problem $(\mathbf{MP}_F(\R))_{ \mu_0, \nu_0}^\rho$ and the separation-of-types property \eqref{eq:csep}. 
By uniqueness in Theorem \ref{thm:MPinf_discrete}, this implies that the rescaled discrete processes converge indeed to the unique solution of $\mathrm{cSBM}(\rho, \infty)_{\mu_0,\nu_0}$. 

\begin{prop}\label{prop:limit_points_MP}
Let $\rho\in(-1,0)$ and $(\mu_0, \nu_0)\in\calM_\tem(\R)^2$ (resp.\ $\calM_\rap(\R)^2$).
Suppose $(\mu_t,\nu_t,\Lambda_t)_{t\ge0}\in D_{[0,\infty)}(\calM_{\tem}(\R)^3)$ (resp.\ $D_{[0,\infty)}(\calM_{\rap}(\R)^3)$)
is any limit point with respect to the Meyer-Zheng topology of the sequence $(\mu^\ssup{n}_t,\nu^\ssup{n}_t,\Lambda^\ssup{n}_t)_{t\ge0}$, $n\in\N$. 
Then $(\mu_t,\nu_t)_{t\ge0}$ solves the martingale problem $(\mathbf{MP}_F(\R))_{ \mu_0, \nu_0}^\rho$, 
with the process $(\Lambda_t)_{t\ge0}$ satisfying the requirements of Definition \ref{defn:MP'}.
\end{prop}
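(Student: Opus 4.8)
The plan is to transfer the discrete martingale problem for $(u^\ssup{n}_t,v^\ssup{n}_t,L^\ssup{n}_t)$ to the rescaled triple $(\mu^\ssup{n}_t,\nu^\ssup{n}_t,\Lambda^\ssup{n}_t)$ and then pass to the limit along the chosen convergent subsequence, following the same scheme as in the proof of Proposition~\ref{prop:further_properties} and of \cite[Prop.~4.3]{BHO15}.

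\textbf{Step 1: rescaling the discrete martingale problem.} Fix $\phi,\psi\in\calC_\rap^\ssup{2}(\R)^+$ (resp.\ $\calC_\tem^\ssup{2}(\R)^+$) and let $\phi^\ssup{n},\psi^\ssup{n}$ be as in \eqref{eq:phi^n}; these are admissible test functions for $(\mathbf{MP}_F(\Z))^\rho_{u^\ssup{n}_0,v^\ssup{n}_0}$. Inserting them into \eqref{MP9} (with $\cS=\Z$) and evaluating at time $n^2t$ produces a martingale in $t$. Using the identities $\langle u^\ssup{n}_{n^2r},\phi^\ssup{n}\rangle_\Z=\langle\mu^\ssup{n}_r,\phi\rangle_\R$ and its analogues for $\nu^\ssup{n}$ and $\Lambda^\ssup{n}$ from \eqref{eq:rescaling}--\eqref{eq:Lambda^n} (so in particular $F(u^\ssup{n}_{n^2r},v^\ssup{n}_{n^2r},\phi^\ssup{n},\psi^\ssup{n})=F(\mu^\ssup{n}_r,\nu^\ssup{n}_r,\phi,\psi)$), the substitution $s=n^2r$ in the two integral terms, and the identity $4(1-\rho^2)\int_{[0,n^2t]\times\Z}F\,\phi^\ssup{n}\psi^\ssup{n}\,L^\ssup{n}(ds,dk)=4(1-\rho^2)\int_{[0,t]\times\R}F\,\phi(x)\psi(x)\,\Lambda^\ssup{n}(dr,dx)$, one rewrites this martingale as $\tilde M^\ssup{n}_t+E^\ssup{n}_t$, where $\tilde M^\ssup{n}_t$ is precisely \eqref{MP9} (with $\cS=\R$) evaluated at $(\mu^\ssup{n}_\cdot,\nu^\ssup{n}_\cdot,\Lambda^\ssup{n}_\cdot)$, and $E^\ssup{n}_t$ collects the error of replacing the diffusively rescaled discrete Laplacian $n^2\,{}^d\Delta\phi^\ssup{n}$ by $(\Delta\phi)^\ssup{n}$. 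The standard estimate for the rescaled discrete Laplacian gives $|n^2\,{}^d\Delta\phi^\ssup{n}(k)-(\Delta\phi)^\ssup{n}(k)|\le\eps_n\,\phi_\lambda^\ssup{n}(k)$ for suitable $\lambda>0$ and $\eps_n\to0$; combining this with $|F|\le1$ (valid since the real part of $\langle\langle\,\cdot\,\rangle\rangle_\rho$ is nonpositive on nonnegative test functions and measures) yields $|E^\ssup{n}_t|\le C\eps_n\int_0^t(\langle\mu^\ssup{n}_r,\phi_\lambda\rangle_\R+\langle\nu^\ssup{n}_r,\phi_\lambda\rangle_\R)\,dr$, so that $\E[\sup_{t\le T}|E^\ssup{n}_t|]\to0$ by Lemma~\ref{lemma tightness}. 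Thus $\tilde M^\ssup{n}$ is a martingale up to an error vanishing in $L^1$, uniformly on $[0,T]$.

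\textbf{Step 2: uniform integrability of the approximating martingales.} One checks that $\sup_n\sup_{t\le T}\E[|\tilde M^\ssup{n}_t|^{1+\delta}]<\infty$ for some $\delta=\delta(\rho)>0$: the two $F$-terms are bounded by $1$; the drift integral is dominated by $C\int_0^T(\langle\mu^\ssup{n}_r,\phi_\lambda\rangle_\R+\langle\nu^\ssup{n}_r,\phi_\lambda\rangle_\R)\,dr$, which is uniformly $L^2$-bounded by Lemma~\ref{lemma tightness}; and the term against $\Lambda^\ssup{n}$ is dominated by $C\langle\Lambda^\ssup{n}_T,\phi_\lambda\rangle_\R$, for which a uniform $L^{1+\delta}$-bound follows from the rescaled form of the $(2+\eps)$-moment estimate \eqref{key_estimate_2} (equivalently Corollary~\ref{cor:uniform p-th moment 2} with $p=2+\eps$ applied to $\mathrm{dSBM}(\rho,\infty)_{u^\ssup{n}_0,v^\ssup{n}_0}$), combined with Lemma~\ref{lem:uniform preservation} and Lemma~\ref{lem:semigroup convergence} exactly as in the proof of Lemma~\ref{lemma tightness}. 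In particular $\{\tilde M^\ssup{n}_t:n\in\N,\,t\le T\}$ is uniformly integrable, and the same moment bounds show that the integrands of the drift term and of the $\Lambda^\ssup{n}$-term form uniformly integrable families on the product space.

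\textbf{Step 3: limit transition.} By Proposition~\ref{prop:tightness} we may fix the subsequence along which $(\mu^\ssup{n}_\cdot,\nu^\ssup{n}_\cdot,\Lambda^\ssup{n}_\cdot)$ converges in law (Meyer--Zheng) to the given limit point $(\mu_\cdot,\nu_\cdot,\Lambda_\cdot)$; by the Skorokhod representation theorem we may assume this holds almost surely, and after a further subsequence $(\mu^\ssup{n}_t,\nu^\ssup{n}_t,\Lambda^\ssup{n}_t)\to(\mu_t,\nu_t,\Lambda_t)$ for Lebesgue-a.e.\ $t$, a.s. Combining this with the uniform integrability from Step~2, one shows, just as in \cite[Prop.~4.3]{BHO15}, that $\tilde M^\ssup{n}_t\to\tilde M_t$ in $L^1$ for a.e.\ $t$, where $\tilde M_t$ denotes \eqref{MP9} evaluated at $(\mu_\cdot,\nu_\cdot,\Lambda_\cdot)$. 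Since the $\tilde M^\ssup{n}$ are martingales up to the $L^1$-vanishing errors $E^\ssup{n}$ and are uniformly integrable, the Meyer--Zheng martingale-limit result \cite[Thm.~11]{MZ84} (used as in the proof of Proposition~\ref{prop:further_properties}) yields that $\tilde M$ is a martingale; right-continuity of $t\mapsto\tilde M_t$ then promotes the martingale identity from a.e.\ $t$ to all $t$. Finally, $(\Lambda_t)_{t\ge0}$ inherits the requirements of Definition~\ref{defn:MP'}: it is increasing (an a.e.\ pointwise limit of increasing paths, hence increasing after passing to the right-continuous version), càdlàg, with $\Lambda_0=0$ (since $\E[\langle\Lambda^\ssup{n}_t,\phi_\lambda\rangle_\R]\to0$ as $t\downarrow0$ uniformly in $n$ via the rescaled first-moment formula \eqref{first moment Lambda discrete equality} and Lemma~\ref{lem:semigroup convergence}) and $\E[\Lambda_t(dx)]\in\calM_\tem(\R)$ (resp.\ $\calM_\rap(\R)$) by Fatou's lemma and Lemma~\ref{lemma tightness}. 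Hence $(\mu_t,\nu_t)_{t\ge0}$ solves $(\mathbf{MP}_F(\R))^\rho_{\mu_0,\nu_0}$ with $(\Lambda_t)_{t\ge0}$ as its increasing process.

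\textbf{Main obstacle.} The genuinely delicate point is the limit transition in the two integral terms under the Meyer--Zheng topology, above all the term integrated against the random measure $\Lambda^\ssup{n}(dr,dx)$: Meyer--Zheng convergence only provides convergence of the time-marginals at Lebesgue-a.e.\ time, so one cannot argue at fixed times, while the integrands ($F\langle\langle\,\cdot\,\rangle\rangle_\rho$, and, via $\Lambda^\ssup{n}$, quantities comparable to $\langle\Lambda^\ssup{n}_T,\phi_\lambda\rangle_\R$) are unbounded, so no pathwise domination is available. This is exactly where the uniform $(2+\eps)$-moment bounds of Proposition~\ref{prop:moments} / \eqref{key_estimate_2} — rather than merely the second-moment bounds of Lemma~\ref{lemma tightness} — are indispensable: they furnish the uniform integrability needed both for the $L^1$-convergence $\tilde M^\ssup{n}_t\to\tilde M_t$ and for identifying $\tilde M$ as a genuine martingale.
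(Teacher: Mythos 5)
Your proof is correct, and its overall scaffold (rescale the discrete martingale problem via \eqref{eq:rescaling}--\eqref{eq:Lambda^n}, absorb the Laplacian discretisation error, then pass to the Meyer--Zheng limit via \cite[Thm.~11]{MZ84}) is the same as in the paper. The one substantive deviation is in how the uniform integrability of the approximating martingales is produced. The paper treats $\tilde M_{n^2 t}(\phi^\ssup{n},\psi^\ssup{n})$ as a martingale whose \emph{explicit quadratic variation} was already derived in Proposition~\ref{prop:further_properties}c) (formula \eqref{qv2}), and then applies Burkholder--Davis--Gundy with $|F|\le1$ to obtain
\[
\E_{u_0^\ssup{n},v_0^\ssup{n}}\Big[\sup_{t\le T}|\tilde M_{n^2 t}(\phi^\ssup{n},\psi^\ssup{n})|^2\Big]
\le C\,\E_{u_0^\ssup{n},v_0^\ssup{n}}\big[\langle L^\ssup{n}_{n^2T},(\phi^\ssup{n})^2+(\psi^\ssup{n})^2\rangle_\Z\big],
\]
which reduces everything to the \emph{first}-moment formula \eqref{first moment Lambda discrete equality} plus the semigroup comparisons of Lemmas~\ref{lem:uniform preservation} and~\ref{lem:semigroup convergence}. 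You instead decompose $\tilde M^\ssup{n}$ into its three defining summands and estimate each one (the $F$-terms trivially, the drift term in $L^2$, the $\Lambda^\ssup{n}$-term in $L^{1+\delta}$ via the rescaled $(2+\eps)$-moment estimate \eqref{key_estimate_2}). Both routes deliver the uniform integrability required by \cite[Thm.~11]{MZ84}, but the paper's is leaner because BDG lets one get away with first moments of $L^\ssup{n}$ at this stage; the higher moments enter only upstream, in establishing \eqref{qv2} itself. Consequently your closing remark that the $(2+\eps)$-moment bounds are ``indispensable'' for the limit transition is slightly overstated: they are used, but only to establish the quadratic variation identity once and for all, after which the present proposition needs only a second-moment bound obtained by BDG. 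A small further remark: after writing the true discrete martingale as $\tilde M^\ssup{n}_t+E^\ssup{n}_t$ with $E^\ssup{n}\to0$ in $L^1$ uniformly on $[0,T]$, the cleanest way to close Step~3 is to apply \cite[Thm.~11]{MZ84} to the genuine martingales $\tilde M^\ssup{n}+E^\ssup{n}$ (which have the same Meyer--Zheng limit $\tilde M$) rather than to $\tilde M^\ssup{n}$ alone; your uniform integrability estimates cover $E^\ssup{n}$ as well, so this is only a matter of phrasing.
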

\begin{proof}
First of all, the limit point $(\Lambda_t)_{t\ge0}$ of the sequence $(\Lambda_t^\ssup{n})_{t\ge0}$ has the properties required in Definition~\ref{defn:MP'}: 
It is clear that $(\Lambda_t)_{t\ge0}$ is increasing with $\Lambda_0=0$,
and condition \eqref{finiteness Lambda 1} follows from the first moment estimate \eqref{eq:uniform bound 2} together with an application of Fatou's lemma.
It remains to check that for all 
test functions $\phi,\psi\in\calC_{\rap}^\ssup{2}(\R)^+$,
the process
\begin{align}\label{MP11}
\bal 
 M_t(\phi,\psi)  & :=   F(\mu_t, \nu_t,\phi,\psi) - F(\mu_0,\nu_0,\phi,\psi)\\
& \quad - \frac{1}{2}\int_0^t F(\mu_s, \nu_s,\phi,\psi)\,\langle\langle \mu_s, \nu_s, \Delta\phi, \Delta\psi\rangle\rangle_\rho \, ds \\
& \quad - 4(1-\rho^2)\int_{[0,t]\times\R} F(\mu_s,\nu_s,\phi,\psi)\,\phi(x)\psi(x)\,\Lambda(ds,dx),\qquad t\ge0
\eal
\end{align}
is a martingale. 

Since $(u^\ssup{n}, v^\ssup{n})$ solves the discrete martingale problem $(\mathbf{MP}_F(\Z))_{u_0^\ssup{n},v_0^\ssup{n}}^\rho$,
we know that
\[\bal  
\tilde M_{n^2t}(\phi^\ssup{n},\psi^\ssup{n})&:= F(u_{n^2t}^\ssup{n}, v_{n^2t}^\ssup{n},\phi^\ssup{n},\psi^\ssup{n}) - F(u_0^\ssup{n},v_0^\ssup{n},\phi^\ssup{n},\psi^\ssup{n})\\
& \quad - \frac{1}{2}\int_0^{n^2t} F(u_{s}^\ssup{n}, v_{s}^\ssup{n},\phi^\ssup{n},\psi^\ssup{n})\,\langle\langle u_{s}^\ssup{n}, v_{s}^\ssup{n}, {^d\Delta}(\phi^\ssup{n}), {^d\Delta}(\psi^\ssup{n})\rangle\rangle_\rho \,ds \\
& \quad - 4(1-\rho^2)\int_{[0,n^2t]\times\Z} F(u_{s}^\ssup{n},v_{s}^\ssup{n},\phi^\ssup{n},\psi^\ssup{n})\,\phi^\ssup{n}(k)\psi^\ssup{n}(k)\,L^\ssup{n}(ds,dk)\\
\eal\]
is a martingale for each $n\in\N$. 
Choose a sequence $n_k\uparrow\infty$ such that $( \mu_t^\ssup{n_k}, \nu_t^\ssup{n_k}, \Lambda_t^\ssup{n_k})_{t\ge0}$ converges to $( \mu_t, \nu_t, \Lambda_t)_{t\ge0}$ w.r.t.\ the Meyer-Zheng topology on $D_{[0,\infty)}(\calM_\tem(\R)^3)$.
In view of \eqref{eq:rescaling} and \eqref{eq:Lambda^n} (and also using the usual approximation of the continuum Laplace operator by its rescaled discrete counterpart), 
we get that $(\tilde M_{n^2t}(\phi^\ssup{n},\psi^\ssup{n}))_{t\ge0}$ converges to $( M_t(\phi,\psi))_{t\ge 0}$ w.r.t.\ Meyer-Zheng on $D_{[0,\infty)}(\R)$ as $k\to\infty$.
Now fixing $T>0$, we know by Burkholder-Davis-Gundy and \eqref{qv2} in Prop.\ \ref{prop:further_properties}c) that
\[\bal
\E_{ u_0^\ssup{n}, v_0^\ssup{n}}\left[\sup_{t\in[0,T]}|\tilde M_{n^2t}(\phi^\ssup{n},\psi^\ssup{n})|^2\right] 
&\le C\,\E_{u_0^\ssup{n},v_0^\ssup{n}}\left[ \left\langle L^\ssup{n}_{n^2T}, (\phi^\ssup{n})^2+(\psi^\ssup{n})^2\right\rangle_\Z\right],
\eal\]
where we have also used that $|F(\cdot)|\le1$. 
Now we argue as in the proof of Proposition \ref{prop:tightness}: Choosing a suitable $\lambda>0$ and combining the lower bound from Lemma \ref{lem:uniform preservation} with formula \eqref{first moment Lambda discrete equality}, 
we see that there is a constant $C'=C'(\phi,\psi,\lambda,T)$ such that the previous display is bounded  by
\[\bal
&C'\,\E_{u_0^\ssup{n},v_0^\ssup{n}}\left[ \int_{[0,n^2T]\times\Z} \left({^dS}_{n^2T-s}\phi_\lambda^\ssup{n}(k)\right)^2\,L^\ssup{n}(ds,dk)\right]\\
&=\frac{C'}{|\rho|}\left\langle \phi_\lambda^\ssup{n}\otimes\phi_\lambda^\ssup{n}, ({^dS}_{n^2T}^\ssup{2} - {^d\tilde S}_{n^2T})(u_0^\ssup{n}\otimes v_0^\ssup{n})\right\rangle_{\Z^2}\\
&\le\frac{C'}{|\rho|} \,\left\langle \phi_\lambda^\ssup{n}, {^dS}_{n^2T} u_0^\ssup{n}\right\rangle_\Z\left\langle \phi_\lambda^\ssup{n}, {^dS}_{n^2T} v_0^\ssup{n})\right\rangle_{\Z}.
\eal\]
Again by Lemma \ref{lem:semigroup convergence}, 
the last display is bounded uniformly in $n\in\N$, hence we get
\[\sup_{n\in\N}\sup_{t\in[0,T]}\E_{ u_0^\ssup{n}, v_0^\ssup{n}}\left[|\tilde M_{n^2t}(\phi^\ssup{n},\psi^\ssup{n})|^2\right]<\infty\]
for all $T>0$.
Applying \cite[Thm.\ 11]{MZ84}, we infer that the Meyer-Zheng limit $(M_t(\phi,\psi))_{t\ge0}$ is again a martingale, which completes our argument. 
\end{proof}

We now turn to the separation-of-types property. As in \cite[Lemma 4.4]{BHO15} (see in particular inequality (51) there), it can be derived from the following bound on mixed second moments:

\begin{lemma}[Mixed second moment bound]\label{lem:moments 1}
Let $\rho\in(-1,0)$ and $( \mu_0, \nu_0)\in\calM_\tem(\R)^2$ (resp.\ $\calM_\rap(\R)^2$).
Suppose $(\mu_t,\nu_t)_{t\ge0}\in D_{[0,\infty)}(\calM_{\tem}(\R)^2)$ (resp.\ $D_{[0,\infty)}(\calM_{\rap}(\R)^2)$) 
is any limit point with respect to the Meyer-Zheng topology of the sequence $(\mu^\ssup{n}_t,\nu^\ssup{n}_t)_{t\ge0}$, $n\in\N$, from \eqref{defn:rescaling}. Then with $\tilde S_t$ as defined in \eqref{eq: tilde S discrete}, we have 
\begin{align}\label{second mixed moment 1}\bal
\E_{\mu_0,\nu_0}\left[\left\langle \mu_t,\phi\right\rangle_\R\left\langle \nu_t,\psi\right\rangle_\R\right] 
&\le\big\langle \phi\otimes\psi, {\tilde S}_t( \mu_0\otimes \nu_0)\big\rangle_{\R^2}\\
\eal\end{align}
for all $t\ge0$ and $\phi,\psi\in\bigcup_{\lambda>0}\calC_{\lambda}^+(\R)$ (resp.\ $\bigcup_{\lambda>0}\calC_{-\lambda}^+(\R)$).
\end{lemma}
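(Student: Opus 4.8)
The plan is to deduce the mixed second-moment bound \eqref{second mixed moment 1} from the \emph{exact} mixed second-moment formula for the discrete infinite-rate model (Proposition~\ref{prop:moments}a) with $\cS=\Z$), together with a diffusive scaling limit for the killed heat semigroup and a passage to the limit in the weak Meyer--Zheng topology. Throughout, fix nonnegative test functions $\phi,\psi$ as in the statement, and $t>0$ (at $t=0$ the bound is immediate). Since $(u_0^\ssup{n},v_0^\ssup{n})\in\calM_\tem(\Z)^2=\calB^+_\tem(\Z)^2$ (resp.\ $\calM_\rap(\Z)^2=\calB^+_\rap(\Z)^2$) and the functions $\phi^\ssup{n},\psi^\ssup{n}$ from \eqref{eq:phi^n} lie in the requisite discrete function space, Proposition~\ref{prop:moments}a) applied to $\mathrm{dSBM}(\rho,\infty)_{u_0^\ssup{n},v_0^\ssup{n}}$ at time $n^2t$, combined with the rescaling identity \eqref{eq:rescaling}, yields for every $n\in\N$
\[
\E_{\mu_0^\ssup{n},\nu_0^\ssup{n}}\!\bigl[\langle\mu_t^\ssup{n},\phi\rangle_\R\,\langle\nu_t^\ssup{n},\psi\rangle_\R\bigr]
=\E_{u_0^\ssup{n},v_0^\ssup{n}}\!\bigl[\langle u^\ssup{n}_{n^2t},\phi^\ssup{n}\rangle_\Z\,\langle v^\ssup{n}_{n^2t},\psi^\ssup{n}\rangle_\Z\bigr]
=\bigl\langle \phi^\ssup{n}\otimes\psi^\ssup{n},\,{^d\tilde S}_{n^2t}(u_0^\ssup{n}\otimes v_0^\ssup{n})\bigr\rangle_{\Z^2}.
\]
Thus at the level of the approximating processes one actually has \emph{equality}; the inequality in \eqref{second mixed moment 1} will arise only from the limit procedure below.

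Next I would pass to $n\to\infty$ along the subsequence $n_k\uparrow\infty$ that realises the given limit point $(\mu_t,\nu_t)_{t\ge0}$. For the right-hand side: inserting the definitions \eqref{eq:phi^n} and \eqref{defn:variabel initial conditions} and rescaling space by $1/n$, the quantity $\langle \phi^\ssup{n}\otimes\psi^\ssup{n},\,{^d\tilde S}_{n^2t}(u_0^\ssup{n}\otimes v_0^\ssup{n})\rangle_{\Z^2}$ is a Riemann-sum approximation of $\langle\phi\otimes\psi,\tilde S_t(\mu_0\otimes\nu_0)\rangle_{\R^2}$, and it converges to the latter by the diffusive scaling limit of the killed heat semigroup: the pair of rescaled independent simple random walks killed upon hitting the diagonal converges to the pair of Brownian motions killed on the diagonal --- equivalently, the one-dimensional difference walk hitting the origin converges to a Brownian motion hitting the origin, the origin being regular for the limit --- and \eqref{conv variabel initial conditions} provides the convergence of the rescaled initial data. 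This is the killed-semigroup counterpart of the heat-semigroup convergence Lemma~\ref{lem:semigroup convergence}, and the uniform tempered-growth estimates needed to exchange the discrete sum with the limit are of the same type as those recorded in the appendix.

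For the left-hand side the obstruction is that convergence in the Meyer--Zheng pseudo-path topology does \emph{not} entail convergence of the one-dimensional marginals at a fixed time $t$. I would instead use the standard fact that it does imply $(\mu_t^\ssup{n_k},\nu_t^\ssup{n_k})\Rightarrow(\mu_t,\nu_t)$ in $\calM_\tem(\R)^2$ (resp.\ $\calM_\rap(\R)^2$) for Lebesgue-a.e.\ $t\ge0$. At such a ``good'' time $t$, the Skorokhod representation theorem together with the \emph{lower semicontinuity} of the maps $\mu\mapsto\langle\mu,\phi\rangle_\R$ and $\nu\mapsto\langle\nu,\psi\rangle_\R$ on $\calM_\tem(\R)$ (these functionals are increasing limits of the continuous, compactly supported functionals obtained by truncating $\phi,\psi$, hence lower semicontinuous; in the $\calM_\rap$-case they are outright continuous) and Fatou's lemma give
\[
\E_{\mu_0,\nu_0}\!\bigl[\langle\mu_t,\phi\rangle_\R\,\langle\nu_t,\psi\rangle_\R\bigr]
\le\liminf_{k\to\infty}\E_{\mu_0^\ssup{n_k},\nu_0^\ssup{n_k}}\!\bigl[\langle\mu_t^\ssup{n_k},\phi\rangle_\R\,\langle\nu_t^\ssup{n_k},\psi\rangle_\R\bigr]
=\bigl\langle\phi\otimes\psi,\tilde S_t(\mu_0\otimes\nu_0)\bigr\rangle_{\R^2},
\]
using the scaling limit of the previous paragraph for the last equality. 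Hence \eqref{second mixed moment 1} holds for a.e.\ $t$. To upgrade to all $t>0$, fix $t$, pick good times $t_m\downarrow t$, and combine right-continuity of the c\`adl\`ag paths of $(\mu_\cdot,\nu_\cdot)$, the same lower semicontinuity, Fatou's lemma, and continuity of $s\mapsto\langle\phi\otimes\psi,\tilde S_s(\mu_0\otimes\nu_0)\rangle_{\R^2}$ at $s=t$ (clear from the explicit kernel $\tilde p_s$) to obtain the bound at $t$.

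I expect the main obstacle to be the diffusive scaling limit of the killed heat semigroup with tempered (resp.\ rapidly decreasing) initial data, i.e.\ the convergence of ${^d\tilde S}_{n^2t}$ to $\tilde S_t$ tested against the rescaled data: although this is morally a standard invariance-principle-with-absorption argument, some care is needed for the uniform tempered-growth control, and it has to be carried out in the appendix alongside Lemma~\ref{lem:semigroup convergence}. The passage to the limit at a fixed time under the weak Meyer--Zheng topology is, in turn, precisely the reason why the conclusion is only an inequality. Once \eqref{second mixed moment 1} is in hand, the separation-of-types property \eqref{eq:csep} for the limit point follows exactly as in \cite[Lemma~4.4]{BHO15} (cf.\ inequality~(51) there), by smoothing the measures by the heat semigroup and letting the smoothing parameter tend to zero.
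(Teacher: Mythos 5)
Your proposal is correct and follows essentially the same route as the paper: apply the exact discrete mixed second-moment formula (Proposition~\ref{prop:moments}a) for $\cS=\Z$) together with \eqref{eq:rescaling}, pass to the limit on the right-hand side via the diffusive scaling of the killed semigroup, use a.e.-in-$t$ convergence of finite-dimensional distributions under Meyer--Zheng plus Skorokhod representation and Fatou on the left-hand side, and upgrade from a.e.\ $t$ to all $t$ by right-continuity. The only cosmetic difference is that the killed-semigroup convergence you flag as the main remaining obstacle is already recorded in the paper as \eqref{eq:conv_killed_semigroup} in Lemma~\ref{lem:semigroup convergence}, so no additional appendix work is needed.
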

\begin{proof}
By \cite[Thm.\ 5]{MZ84} (see also \cite[Thm.\ 1.1(b)]{Kurtz91}) we can find a sequence $n_k\uparrow\infty$ and a set $I\subseteq(0,\infty)$ of full Lebesgue measure such that 
the finite dimensional distributions of $(\mu_t^\ssup{n_k},\nu_t^\ssup{n_k})_{t\in I}$ converge weakly to those of $(\mu_t,\nu_t)_{t\in I}$ as $k\to\infty$. 
Fix $t\in I$. Then for all test functions $\phi,\psi$ as above, we
can assume that almost surely
\begin{equation}\label{weak convergence second mixed moments}
\big\langle \mu_t^\ssup{n_k},\phi\big\rangle_\R\big\langle \nu_t^\ssup{n_k},\psi\big\rangle_\R\xrightarrow{k\uparrow\infty}\left\langle \mu_t,\phi\right\rangle_\R\left\langle \nu_t,\psi\right\rangle_\R
\end{equation}
in $\R$. 
Using Fatou's lemma, we get
\[\bal
\E_{\mu_0,\nu_0}\left[\left\langle \mu_t,\phi\right\rangle_\R\left\langle \nu_t,\psi\right\rangle_\R\right]&\le\liminf_{k\to\infty}\E_{\mu_0^\ssup{n_k},\nu_0^\ssup{n_k}}\left[\big\langle \mu_t^\ssup{n_k},\phi\big\rangle_\R\big\langle \nu_t^\ssup{n_k},\psi\big\rangle_\R\right].
\eal\]
But for all $n\in\N$ we have by 
the mixed second moment formula \eqref{second mixed moment 2} (for $\cS=\Z$) that
\begin{align}\bal
\E_{\mu_0^\ssup{n},\nu_0^\ssup{n}}\left[\left\langle \mu_t^\ssup{n},\phi\right\rangle_\R\left\langle \nu_t^\ssup{n},\psi\right\rangle_\R\right]&=\E_{u_0^\ssup{n},v_0^\ssup{n}}\left[\left\langle u^\ssup{n}_{n^2t},\phi^\ssup{n}\right\rangle_\Z\,\left\langle v^\ssup{n}_{n^2t},\psi^\ssup{n}\right\rangle_\Z\right]\\
&=\big\langle \phi^\ssup{n}\otimes\psi^\ssup{n}, {^d\tilde S}_{n^2t}(u_0^\ssup{n}\otimes v_0^\ssup{n})\big\rangle_{\Z^2}.
\eal\end{align}
As the usual discrete heat semigroup converges to its continuous counterpart under diffusive rescaling, the same holds for the killed semigroup $({^d\tilde S_t})_{t\ge0}$,
see e.g. Lemma \ref{lem:semigroup convergence} for details. 
Thus the RHS of the above display converges to the corresponding continuous quantity, namely to the RHS of \eqref{second mixed moment 1}.
This shows the estimate~\eqref{second mixed moment 1}
for all $t\in I$. Using the fact that $I$ has full Lebesgue measure together with right-continuity of the paths of $(\mu_t,\nu_t)_{t\ge0}$ and Fatou's lemma, we get the same estimate for all $t>0$.
\end{proof}

\begin{corollary}[Separation of types]\label{cor:sep}
Under the assumptions of Lemma \ref{lem:moments 1}, the separation-of-types property \eqref{eq:csep} holds for each $t>0$.
\end{corollary}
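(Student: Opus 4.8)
The plan is to read off \eqref{eq:csep} directly from the mixed second moment bound of Lemma~\ref{lem:moments 1}, following the argument of \cite[Lemma~4.4]{BHO15}. Fix $t>0$ and $x\in\R$, and let $p_\eps$ denote the heat kernel on $\R$. The first step is to observe that $y\mapsto p_\eps(x-y)$ is bounded and rapidly decreasing, hence an admissible test function both in the tempered and in the rapidly decreasing regime, and that $S_\eps\mu_t(x)=\langle\mu_t,p_\eps(x-\cdot)\rangle_\R$ (and similarly for $\nu_t$). Applying Lemma~\ref{lem:moments 1} with $\phi=\psi=p_\eps(x-\cdot)$ then gives
\begin{equation}\label{eq:sep_step}
\E_{\mu_0,\nu_0}\big[S_\eps\mu_t(x)\,S_\eps\nu_t(x)\big]\le\big\langle p_\eps(x-\cdot)\otimes p_\eps(x-\cdot),\,\tilde S_t(\mu_0\otimes\nu_0)\big\rangle_{\R^2}=\big(S_\eps^\ssup{2}g\big)(x,x),
\end{equation}
where $g:=\tilde S_t(\mu_0\otimes\nu_0)$.

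Next I would exploit the two properties of $g$ recorded after \eqref{transition density tilde S} in the appendix: $g$ is continuous on $\R^2$ and vanishes on the diagonal, and moreover $0\le g\le S_t^\ssup{2}(\mu_0\otimes\nu_0)=S_t\mu_0\otimes S_t\nu_0$, the latter being a continuous tempered (resp.\ rapidly decreasing) function. From the upper bound and the multiplicativity of the heat semigroup on tensor products,
\[\big(S_\eps^\ssup{2}g\big)(x,x)\le\big(S_\eps^\ssup{2}(S_t\mu_0\otimes S_t\nu_0)\big)(x,x)=S_{t+\eps}\mu_0(x)\,S_{t+\eps}\nu_0(x),\]
which together with \eqref{eq:sep_step} yields the first inequality in \eqref{eq:csep}. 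For the convergence, continuity of $g$ together with the domination by a continuous tempered function lets me apply the standard smoothing property of the heat semigroup (Appendix~\ref{sec:semigroup}) to conclude $\big(S_\eps^\ssup{2}g\big)(x,x)\to g(x,x)$ as $\eps\downarrow0$; since $g$ vanishes on the diagonal, $g(x,x)=0$, and hence $\E_{\mu_0,\nu_0}[S_\eps\mu_t(x)\,S_\eps\nu_t(x)]\to0$, which is \eqref{eq:csep}.

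There is no genuine obstacle here: the substantive content is already in the moment bound \eqref{second mixed moment 1}, and what remains is routine bookkeeping — verifying that $p_\eps(x-\cdot)$ is an admissible test function in both state-space settings, and that the pairing identities and the domination $\tilde S_t\le S_t^\ssup{2}$ are justified by the heat-kernel estimates collected in the appendix. If anything needs the most attention it is making sure the smoothing convergence $S_\eps^\ssup{2}g\to g$ is applied to a class of functions for which it is actually available, which is why the domination $g\le S_t\mu_0\otimes S_t\nu_0$ is recorded explicitly above.
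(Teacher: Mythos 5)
Your proof is correct and follows essentially the same route as the paper: apply Lemma~\ref{lem:moments 1} with $\phi=\psi=p_\eps(x-\cdot)$, use the domination $\tilde S_t\le S_t^\ssup{2}$ on $\mu_0\otimes\nu_0$ to get the upper bound $S_{t+\eps}\mu_0(x)\,S_{t+\eps}\nu_0(x)$, and let $\eps\downarrow0$ using that $\tilde S_t(\mu_0\otimes\nu_0)$ is continuous and vanishes on the diagonal. The only difference is cosmetic — you state the domination $g\le S_t\mu_0\otimes S_t\nu_0$ once more to justify the passage $\big(S_\eps^\ssup{2}g\big)(x,x)\to g(x,x)$, which the paper leaves implicit.
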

\begin{proof}
Having shown the upper bound \eqref{second mixed moment 1} for the mixed second moment, the proof of the separation-of-types property is basically the same as that of \cite[Lemma 4.4]{BHO15}: 
For each $t>0$, $x\in\R$ and $\eps>0$ fixed, letting $\phi(\cdot):=\psi(\cdot):=p_\eps(x-\cdot)$ in \eqref{second mixed moment 1} gives
\begin{align}\bal\label{proof separation 1a}
&\E_{ \mu_0, \nu_0}\left[S_\eps\mu_t(x)\,S_\eps\nu_t(x)\right]\\
&\le\iint dydz\,p_\eps(x-y)p_\eps(x-z)\,\tilde S_t( \mu_0\otimes \nu_0)(y,z)\\
&\le S_{t+\eps} \mu_0(x)\,S_{t+\eps} \nu_0(x).
\eal\end{align} 
Since $(y,z)\mapsto\tilde S_t(\mu_0\otimes\nu_0)(y,z)$ is continuous and vanishes on the diagonal, by taking $\eps\downarrow0$ in the first inequality in \eqref{proof separation 1a} 
we get $\E_{ \mu_0, \nu_0}\left[S_\eps\mu_t(x)\,S_\eps\nu_t(x)\right]\to0$, which proves our claim.
\end{proof}

Note that by combining Prop.\ \ref{prop:tightness} with Prop.\ \ref{prop:limit_points_MP} and Cor.\ \ref{cor:sep}, we have now fully proved the convergence result in Theorem \ref{thm:conv}.

\section{Preservation of order and the single-point interface}\label{sec:proof SPI}

In this section, we prove Theorem \ref{thm:SST}. 
For $\rho\in(-1,0)$, consider the solution $(\mu_t,\nu_t)_{t\ge0}$ to $\mathrm{cSBM}(\rho,\infty)_{\mu_0,\nu_0}$ with initial conditions $(\mu_0,\nu_0)\in\calM_\tem(\R)^2$ or $\cM_\rap(\R)^2$ which are mutually singular 
and such that the types are strictly ordered, i.e. $R(\mu_0)\le L(\nu_0)$. 
The proof involves several parts:
First we show that the initial ordering of types is preserved:
\begin{equation}\label{proof:SST1}
\p_{\mu_0,\nu_0}[R(\mu_t)\le L(\nu_t)\text{ for all }t>0]=1.
\end{equation}
Then, we will show that the solution has a single-point interface, i.e.
\begin{equation}\label{proof:SST2}
\p_{\mu_0,\nu_0}[R(\mu_t)= L(\nu_t)]=1
\end{equation}
for all $t>0$. 
Both properties (in an analogous sense) are first shown for the discrete model and then extended to the continuous model by an application of Theorem \ref{thm:conv}. 
The central observation in Lemma~\ref{lem:lower-/upper semicontinuity} is here that the mapping $\mu \mapsto R(\mu)$, respectively $\mu \mapsto L(\mu)$,
which assigns to each measure the rightmost, respectively leftmost, point in the support is
lower (respectively upper) semicontinuous. 
Thus, \eqref{proof:SST1} follows by arguing that the discrete approximation satisfies the same property
combined with Theorem~\ref{thm:conv}.
In order to prove \eqref{proof:SST2}, we combine \eqref{proof:SST1} with the observation that for fixed $t>0$, the measure $\mu_t+\nu_t$ is strictly positive almost surely, i.e. $\supp(\mu_t+\nu_t)=\R$, see Cor.\ \ref{cor:strict positivity} below. 
Finally, we derive the mutual singularity of the measures $\mu_t$ and $\nu_t$ for each $t>0$ from the single-point interface and the separation-of-types property \eqref{eq:csep}.

\begin{lemma}\label{lem:lower-/upper semicontinuity}
The mapping 
\[R(\cdot):\calM_\tem(\R)\to\bar\R,\qquad \mu\mapsto R(\mu):=\sup\supp(\mu)\]
is lower semicontinuous, and the mapping
\[L(\cdot):\calM_\tem(\R)\to\bar\R,\qquad \mu\mapsto L(\mu):=\inf\supp(\mu)\]
is upper semicontinuous. The same holds if $\cM_\tem(\R)$ is replaced by $\cM_\rap(\R)$.
\end{lemma}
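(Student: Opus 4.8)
The plan is to verify semicontinuity directly at the level of superlevel and sublevel sets: I will show that for every $a\in\R$ the set $\{\mu\in\cM_\tem(\R):R(\mu)>a\}$ is open (which is exactly lower semicontinuity of $R$) and, symmetrically, that $\{\mu:L(\mu)<a\}$ is open (upper semicontinuity of $L$); the cases $a=\pm\infty$ are then trivial, or follow by taking unions over $a\in\R$. The only feature of the topology that enters is that it is finer than the topology of vague convergence of measures on $\R$, so that in particular $\nu\mapsto\langle\nu,\phi\rangle_\R$ is continuous for every $\phi\in\calC_c(\R)$; the same holds on $\cM_\rap(\R)$, so the argument below applies word for word to both state spaces and in particular gives the last assertion of the lemma.

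First I would handle $R$. Fix $a\in\R$ and $\mu$ with $R(\mu)>a$. By definition of the measure-theoretic support there is $x_0\in\supp(\mu)$ with $x_0>a$; choose a bounded open interval $I$ with $x_0\in I\subseteq(a,\infty)$. Since $I$ is an open neighbourhood of $x_0\in\supp(\mu)$ we have $\mu(I)>0$, and a short argument (inner regularity of $\mu$, or monotone convergence against an increasing sequence in $\calC_c(\R)$ exhausting $I$) produces $\phi\in\calC_c(\R)$ with $0\le\phi\le1$, $\supp(\phi)\subseteq I$ and $\langle\mu,\phi\rangle_\R>0$. Hence $\calU:=\{\nu:\langle\nu,\phi\rangle_\R>0\}$ is an open neighbourhood of $\mu$. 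It remains to see that $\calU\subseteq\{R>a\}$: if $\langle\nu,\phi\rangle_\R>0$ then $\nu(\supp(\phi))>0$, so $\supp(\nu)\cap\supp(\phi)\neq\emptyset$ (otherwise $\supp(\phi)$ would be contained in the open $\nu$-null set $\supp(\nu)^c$), and any point of this intersection lies in $I\subseteq(a,\infty)$, whence $R(\nu)>a$. Thus $\{R>a\}$ is open.

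The argument for $L$ is the mirror image: given $\mu$ with $L(\mu)<a$, choose $x_0\in\supp(\mu)$ with $x_0<a$, a bounded open interval $I$ with $x_0\in I\subseteq(-\infty,a)$, and $\phi\in\calC_c(\R)$ supported in $I$ with $\langle\mu,\phi\rangle_\R>0$ exactly as above; then $\{\nu:\langle\nu,\phi\rangle_\R>0\}$ is an open neighbourhood of $\mu$ contained in $\{L<a\}$. Hence $\{L<a\}$ is open and $L$ is upper semicontinuous; the identical reasoning on $\cM_\rap(\R)$ completes the proof.

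I do not anticipate a genuine obstacle here. The two points requiring a little care are (i) observing that compactly supported continuous functions are legitimate test functions for both the $\cM_\tem(\R)$ and the $\cM_\rap(\R)$ topologies, so that the evaluation maps are continuous, and (ii) the elementary measure-theoretic fact that a measure putting positive mass on an open set must have its support meet that set --- both of which are entirely routine.
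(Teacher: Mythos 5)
Your proof is correct. The underlying mechanism is the same as the paper's: both arguments reduce to producing a nonnegative compactly supported continuous $\phi$ with $\supp(\phi)\subseteq(a,\infty)$ and $\langle\mu,\phi\rangle>0$, and then exploiting that evaluation against such test functions is continuous for the $\cM_\tem(\R)$ (and $\cM_\rap(\R)$) topology, so that nearby measures must also charge $(a,\infty)$. The packaging, however, differs. The paper argues sequentially: it fixes $\mu^\ssup{n}\to\mu$, sets $\tilde R:=\liminf_n R(\mu^\ssup{n})$, and proves $R(\mu)\le\tilde R$ via a three-way case distinction on $\tilde R=+\infty$, $\tilde R=-\infty$ (which forces $\mu=0$) and $\tilde R\in\R$ (argued by contradiction). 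You instead show directly that each superlevel set $\{R>a\}$, $a\in\R$, is open, which yields lower semicontinuity in one stroke and absorbs the degenerate cases automatically: $\mu=0$ never lies in $\{R>a\}$, and $\{R>-\infty\}=\bigcup_{a\in\R}\{R>a\}$ is then open for free. Since the state spaces are metrizable, your topological formulation is equivalent to the sequential inequality $R(\mu)\le\liminf_n R(\mu^\ssup{n})$ that the paper records and then uses in Corollary~\ref{cor:preservation of SST 1}, so nothing is lost; your route is marginally cleaner, while the paper's makes the quantitative sequential statement explicit in the form in which it is later invoked. The two points you flag as needing care (that $\calC_c(\R)\subseteq\bigcup_{\lambda>0}\calC_\lambda(\R)$, so evaluation maps are continuous on both $\cM_\tem(\R)$ and $\cM_\rap(\R)$, and that a Radon measure charging an open set has support meeting it) are indeed the only nontrivial ingredients, and both hold.
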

\begin{proof}
We will prove lower semicontinuity of $R(\cdot)$, since the proof for upper semicontinuity of $L(\cdot)$ is completely analogous.

Let $(\mu^\ssup{n})_{n\in\N}$ be a sequence in $\calM_\tem(\R)$ with $\mu^\ssup{n}\to\mu\in\calM_\tem$ as $n\to\infty$.
For the purposes of the proof, we write 
\[R_n:=R(\mu^\ssup{n})=\sup\supp(\mu^\ssup{n}),\qquad \tilde R:=\liminf_{n\to\infty}R_n\in\bar\R,
\]
and we have to show that
\[R(\mu)\le \tilde R.\]
We distinguish the three possible cases $\tilde R=\infty$, $\tilde R=-\infty$ and $\tilde R\in\R$, where in the first case the assertion is trivially true. 
Suppose that $\tilde R=-\infty$, then we can find a subsequence such that $R_{n_k}\to-\infty$. Take a test function $\phi\in\calC_c^+$.
Then $\langle\mu^\ssup{n_k},\phi\rangle\to0$ since $\mu^\ssup{n_k}$ is supported on $(-\infty, R_{n_k}]$ and $\phi$ is compactly supported. Since on the other hand $\langle\mu^\ssup{n_k},\phi\rangle\to\langle\mu,\phi\rangle$, we conclude that $\langle\mu,\phi\rangle=0$. Since $\phi\in\calC_c^+$ was arbitrary, this implies 
$\mu=0$, i.e. $R(\mu)=-\infty=\tilde R$. 

It remains to consider the case $\tilde R\in\R$. We show that 
$\supp(\mu)\subseteq (-\infty, \tilde R]$,
whence we get 
$R(\mu)\le \tilde R$
and our assertion is proved. 
Assume that $\supp(\mu)\cap (\tilde R,\infty)\ne\emptyset$. Then by the definition of the support, 
we can find a function $\phi\in\calC_c^+$ with $\supp(\phi)\subseteq(\tilde R,\infty)$ such that $\langle\mu,\phi\rangle >0$. On the other hand, let $(R_{n_k})_{k\in\N}$ be a subsequence such that $R_{n_k}\to \tilde R$ as $k\to\infty$. Then we have $\langle\mu,\phi\rangle =\lim_{k\to\infty} \langle\mu^\ssup{n_k},\phi\rangle\to0 $ since $\mu^\ssup{n_k}$ is supported on $(-\infty, R_{n_k}]$ and $(-\infty, R_{n_k}]\cap\supp(\phi)=\emptyset$ for sufficiently large $k$, giving a contradiction. This completes our proof.
\end{proof}

\begin{corollary}\label{cor:preservation of SST 1}
Let $(\mu^\ssup{n},\nu^\ssup{n})_{n\in\N}$ be a convergent sequence in $\calM_\tem(\R)^2$ or $\calM_\rap(\R)^2$ with limit $(\mu,\nu)$.
Further, assume that for all $n\in\N$ we have 
\[R(\mu^\ssup{n})\le L(\nu^\ssup{n}).\]
Then we have also
\[R(\mu)\le\liminf_{n\to\infty} R(\mu^\ssup{n})\le\limsup_{n\to\infty} L(\nu^\ssup{n})\le L(\nu).\]
\end{corollary}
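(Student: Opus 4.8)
The plan is to obtain this corollary as an immediate formal consequence of Lemma~\ref{lem:lower-/upper semicontinuity}, requiring no further measure-theoretic input beyond that lemma and the elementary order properties of $\liminf$ and $\limsup$ in the extended line $\bar\R$. I would argue in three short steps. First, I would apply the lower semicontinuity of $R(\cdot):\calM_\tem(\R)\to\bar\R$ (resp.\ on $\calM_\rap(\R)$) from Lemma~\ref{lem:lower-/upper semicontinuity} to the convergent sequence $\mu^\ssup{n}\to\mu$, which by definition of lower semicontinuity yields $R(\mu)\le\liminf_{n\to\infty}R(\mu^\ssup{n})$.

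Second, I would invoke the standing hypothesis $R(\mu^\ssup{n})\le L(\nu^\ssup{n})$ for every $n\in\N$: since $\liminf$ is monotone with respect to the pointwise order on $\bar\R$-valued sequences, this gives $\liminf_{n\to\infty}R(\mu^\ssup{n})\le\liminf_{n\to\infty}L(\nu^\ssup{n})\le\limsup_{n\to\infty}L(\nu^\ssup{n})$. Third, I would apply the upper semicontinuity of $L(\cdot)$ from Lemma~\ref{lem:lower-/upper semicontinuity} to the convergent sequence $\nu^\ssup{n}\to\nu$, obtaining $\limsup_{n\to\infty}L(\nu^\ssup{n})\le L(\nu)$. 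Concatenating the inequalities from these three steps produces exactly the claimed chain $R(\mu)\le\liminf_n R(\mu^\ssup{n})\le\limsup_n L(\nu^\ssup{n})\le L(\nu)$.

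The only point demanding a (trivial) moment of care is that all four quantities take values in $\bar\R$, so the semicontinuity statements and the monotonicity of $\liminf/\limsup$ must be read in the extended line; but Lemma~\ref{lem:lower-/upper semicontinuity} is already formulated for $\bar\R$-valued maps and $\bar\R$ is a complete totally ordered space, so no genuine difficulty arises. In short, I do not expect any real obstacle: the corollary is a bookkeeping consequence of the semicontinuity lemma, and all the substantive work has already been carried out in proving that lemma.
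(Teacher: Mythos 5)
Your proposal is correct and is exactly the intended argument: the paper states this corollary without separate proof as an immediate consequence of Lemma~\ref{lem:lower-/upper semicontinuity}, combining lower semicontinuity of $R$, the termwise hypothesis $R(\mu^\ssup{n})\le L(\nu^\ssup{n})$, and upper semicontinuity of $L$, all read in $\bar\R$. Nothing is missing.
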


\begin{lemma}\label{lem:preservation of SST 2}
Suppose $(\mu_t^\ssup{n},\nu_t^\ssup{n})_{t\ge0}$, $n\in\N$, is a sequence of processes in $D_{[0,\infty)}(\calM_\tem(\R)^2)$ (resp.\ $D_{[0,\infty)}(\calM_\rap(\R)^2)$)
such that $(\mu_t^\ssup{n},\nu_t^\ssup{n})_{t\ge0}\to(\mu_t,\nu_t)_{t\ge0}$ weakly in $D_{[0,\infty)}(\calM_\tem(\R)^2)$ (resp.\ $D_{[0,\infty)}(\calM_\rap(\R)^2)$) w.r.t. the Meyer-Zheng topology. Suppose further that 
\[\p[R(\mu_t^\ssup{n})\le L(\nu_t^\ssup{n})\text{ for all }t>0, n\in\N]=1.\]
Then we have also
\begin{equation}\label{proof:SST4}
\p[R(\mu_t)\le L(\nu_t)\text{ for all }t>0]=1.
\end{equation}
\end{lemma}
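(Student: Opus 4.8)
The plan is to transfer the pathwise ordering $R(\mu^\ssup{n}_t)\le L(\nu^\ssup{n}_t)$ from the approximating processes to the limit. The obstacle is that Meyer--Zheng convergence controls the processes only at Lebesgue-a.e.\ time (and then only along a subsequence), so one cannot read off a fixed-time statement directly. I would proceed in three steps: first deduce $R(\mu_t)\le L(\nu_t)$ a.s.\ for Lebesgue-a.e.\ $t>0$; then fix a countable dense set of such good times; finally extend to all $t>0$ using right-continuity of the limiting paths together with the semicontinuity of $R$ and $L$ from Lemma~\ref{lem:lower-/upper semicontinuity}.

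\emph{Ordering at almost every time.} By \cite[Thm.\ 5]{MZ84} (cf.\ also \cite[Thm.\ 1.1(b)]{Kurtz91}), the Meyer--Zheng convergence $(\mu^\ssup{n}_t,\nu^\ssup{n}_t)_{t\ge0}\Rightarrow(\mu_t,\nu_t)_{t\ge0}$ lets us pass to a subsequence $n_k\uparrow\infty$ and find a set $I\subseteq(0,\infty)$ of full Lebesgue measure along which the finite-dimensional distributions converge; in particular $(\mu^\ssup{n_k}_t,\nu^\ssup{n_k}_t)\to(\mu_t,\nu_t)$ in law on $\calM_\tem(\R)^2$ for each fixed $t\in I$. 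Consider the set
\[
A:=\big\{(m,m')\in\calM_\tem(\R)^2 : R(m)\le L(m')\big\}.
\]
Corollary~\ref{cor:preservation of SST 1} shows exactly that $A$ is sequentially closed, hence closed (recall $\calM_\tem(\R)$ is metrizable), and in particular Borel. Since by hypothesis $\p[(\mu^\ssup{n_k}_t,\nu^\ssup{n_k}_t)\in A]=1$ for all $k$, the Portmanteau theorem for closed sets gives
\[
1=\limsup_{k\to\infty}\p\big[(\mu^\ssup{n_k}_t,\nu^\ssup{n_k}_t)\in A\big]\le\p\big[(\mu_t,\nu_t)\in A\big],
\]
so $\p[R(\mu_t)\le L(\nu_t)]=1$ for every $t\in I$. (Equivalently, one could invoke the Skorokhod representation theorem and then apply Corollary~\ref{cor:preservation of SST 1} pathwise.) The same reasoning applies verbatim with $\calM_\rap(\R)$ in place of $\calM_\tem(\R)$.

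\emph{Extension to all times.} Pick a countable set $D\subseteq I$ dense in $(0,\infty)$ (possible since $I$ is co-null). By a countable union bound there is an event $\Omega_0$ of full probability on which $R(\mu_s)\le L(\nu_s)$ holds simultaneously for every $s\in D$ and on which, in addition, the path $t\mapsto(\mu_t,\nu_t)$ is right-continuous in $\calM_\tem(\R)^2$ (recall the limit process has paths in $D_{[0,\infty)}(\calM_\tem(\R)^2)$). Fix $\omega\in\Omega_0$ and $t>0$, and choose $s_k\in D$ with $s_k\downarrow t$. Then $\mu_{s_k}\to\mu_t$ and $\nu_{s_k}\to\nu_t$ in $\calM_\tem(\R)$, so using lower semicontinuity of $R(\cdot)$, upper semicontinuity of $L(\cdot)$ (Lemma~\ref{lem:lower-/upper semicontinuity}) and $R(\mu_{s_k})\le L(\nu_{s_k})$,
\[
R(\mu_t)\le\liminf_{k\to\infty}R(\mu_{s_k})\le\liminf_{k\to\infty}L(\nu_{s_k})\le\limsup_{k\to\infty}L(\nu_{s_k})\le L(\nu_t).
\]
As $t>0$ was arbitrary, this proves \eqref{proof:SST4}.

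The main obstacle is the first step: because the Meyer--Zheng topology is strictly weaker than the Skorokhod topology and ignores null sets of times, passing to a statement at a prescribed time requires the finite-dimensional convergence along a subsequence at a.e.\ time, together with the closedness of $A$ --- which is precisely the semicontinuity of $R$ and $L$ packaged as a joint statement. Once the a.e.-time ordering is established, the upgrade to all times is soft and only uses right-continuity of the limiting paths.
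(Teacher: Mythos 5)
Your proof is correct and follows essentially the same route as the paper: extract a co-null time set $I$ with finite-dimensional convergence via Meyer--Zheng, transfer the ordering at fixed $t\in I$ (the paper uses Skorokhod representation and Corollary~\ref{cor:preservation of SST 1} pathwise, you use Portmanteau on the closed set $A$ --- equivalent, as you note), then extend from a countable dense subset to all $t>0$ by right-continuity and the semicontinuity of $R$ and $L$.
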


\begin{proof}
Using \cite[Thm.\ 5]{MZ84} (see also \cite[Thm.\ 1.1(b)]{Kurtz91}), 
there exists a set $I\subseteq(0,\infty)$ of full Lebesgue measure such that by passing to a subsequence (which we do not distinguish in notation) we may assume that 
the finite dimensional distributions of $(\mu_t^\ssup{n},\nu_t^\ssup{n})_{t\in I}$ converge weakly to those of $(\mu_t,\nu_t)_{t\in I}$ as $n\to\infty$.

Fix $t\in I$. 
By Skorokhod's representation theorem we may - and will - assume that $(\mu_t^\ssup{n},\nu_t^\ssup{n})\to(\mu_t,\nu_t)$ in $\calM_\tem(\R)^2$ \emph{almost surely}. 
Applying Corollary \ref{cor:preservation of SST 1} pathwise on a set of full probability, we conclude that
\begin{equation}\label{proof:SST5}
\p[R(\mu_t)\le L(\nu_t)]=1,\qquad t\in I.
\end{equation}
Now choose a dense countable subset $J\subseteq I$ and intersect countably many sets of full probability to obtain 
\begin{equation}\label{proof:SST6}
\p[R(\mu_t)\le L(\nu_t)\text{ for all }t\in J]=1.
\end{equation}

For arbitrary $t>0$, since $J$  is dense we can choose a sequence $t_n\downarrow t$ with $t_n\in J$. Now use the fact that \eqref{proof:SST6} holds for each $t_n$ plus right-continuity of the paths of $(\mu_t,\nu_t)_{t\ge0}$ and again apply Corollary \ref{cor:preservation of SST 1} (with $(\mu^\ssup{n},\nu^\ssup{n}):=(\mu_{t_n},\nu_{t_n}))$ pathwise on a set of full probability to see that \eqref{proof:SST4} holds.
\end{proof}

We return to the discrete-space infinite rate model $\mathrm{dSBM}(\rho,\infty)_{u_0,v_0}$, with initial conditions $(u_0,v_0)\in\calM_\tem(\Z)^2$, and now discuss strict positivity of the solution $(u_t,v_t)$ for fixed positive times $t>0$. 
Let $Q^\rho_{x,y}$ denote the exit measure of a planar $\rho$-correlated Brownian motion $(B^\ssup{1},B^\ssup{2})$ from the first quadrant $(\R^+)^2$, started at $(x,y)\in(\R^+)^2$. That is, defining $\tau:=\inf\{t>0:B_t^\ssup{1} B_t^\ssup{2}=0\}$, let
\[Q_{x,y}^\rho(\cdot):=\p_{x,y}[(B_\tau^\ssup{1},B_\tau^\ssup{2})\in\cdot].\]
Then $Q_{x,y}^\rho$ is concentrated on the boundary of the first quadrant, i.e. on $(\R^+)^2\setminus(0,\infty)^2$, and has no atom at zero iff $(x,y)\ne(0,0)$.
Moreover, the mapping $(x,y)\mapsto Q_{x,y}^\rho$ is continuous. 
For an exact description of $Q^\rho_{x,y}$, see e.g.\ \cite{DM12}, in particular eq.\ (3.16) there. 
By a classical result due to \cite{KO10}, for initial conditions that are already separated (i.e.\ $u_0(k)v_0(k)=0$ for all $k\in\Z$) the exit measure $Q^\rho_{x,y}$ determines the distribution of the solution for fixed time $t>0$, as follows: We have
\begin{equation}\label{eq:fixed time distribution_discrete}
\E_{u_0,v_0}\left[Q^\rho_{\langle u_t,\,\phi\rangle_\Z,\,\langle v_t,\,\psi\rangle_\Z}(\cdot)\right]=Q^\rho_{\langle {^dS}_tu_0,\,\phi\rangle_\Z,\,\langle {^dS}_tv_0,\,\psi\rangle_\Z}(\cdot)
\end{equation}
for all suitable test functions $\phi,\psi$. 
This was derived in \cite[Thm.\ 2]{KO10} for the case $\rho=0$ as a consequence of the Trotter approximation, which can also be generalized to $\rho\in(-1,1)$ (see e.g.\ \cite{DM12}, Sec.\ 4.3, pp.\ 34ff.). 
Choosing $\phi:=\psi:=\1_{\{k\}}$ and using that obviously $Q_{x,y}^\rho(\cdot)=\delta_{(x,y)}(\cdot)$ if $xy=0$ shows that the distribution of $(u_t(k),v_t(k))$ at fixed space-time points is given by
\begin{equation}\label{eq:fixed time distribution_discrete_2}
\p_{u_0,v_0}[(u_t(k),v_t(k))\in\cdot]=Q^\rho_{{^dS}_tu_0(k),{^dS}_tv_0(k)}(\cdot),\qquad k\in\Z,\,t>0.
\end{equation}
From this it follows immediately that $u_t(k)+v_t(k)>0$ almost surely if $u_0$ and $v_0$ are not both identically zero, since then $(S_tu_0(k),S_tv_0(k))\ne(0,0)$. 

It is straightforward to generalize \eqref{eq:fixed time distribution_discrete}-\eqref{eq:fixed time distribution_discrete_2} to arbitrary (not necessarily separated) initial conditions $(u_0,v_0)\in\cM_\tem(\Z)^2$ as employed in our framework: In fact, since the separation of types holds at positive times, by the Markov property applied at time $s\in(0,t)$ we get
\begin{align}\bal
\E_{u_0,v_0}\left[Q^\rho_{\langle u_t,\,\phi\rangle_\Z,\,\langle v_t,\,\psi\rangle_\Z}(\cdot)\right]&=\E_{u_0,v_0}\left[\E_{u_s,v_s}\left[Q^\rho_{\langle u_{t-s},\,\phi\rangle_\Z,\,\langle v_{t-s},\,\psi\rangle_\Z}(\cdot)\right]\right]\\
&=\E_{u_0,v_0}\left[Q^\rho_{\langle {^dS}_{t-s}u_s,\,\phi\rangle_\Z,\,\langle {^dS}_{t-s}v_s,\,\psi\rangle_\Z}(\cdot)\right].
\eal\end{align}
Letting $s\downarrow0$ and using the right-continuity of the paths of $(u_t,v_t)_{t\ge0}$ we obtain \eqref{eq:fixed time distribution_discrete}, 
and \eqref{eq:fixed time distribution_discrete_2} follows as before.

As a first simple application of our convergence result Theorem \ref{thm:conv}, we can now easily extend \eqref{eq:fixed time distribution_discrete} to continuous space:

\begin{lemma}\label{lem:fixed time distribution}
Let $\rho\in(-1,0)$. Assume that $(\mu_0,\nu_0)\in\calM_\tem(\R)^2$ (resp.\ $\calM_\rap(\R)^2$). 
Then we have for the solution of $\mathrm{cSBM}(\rho,\infty)_{\mu_0,\nu_0}$ that
\begin{equation}\label{eq:fixed time distribution}
\E_{\mu_0,\nu_0}\left[Q^\rho_{\langle \mu_t,\,\phi\rangle_\R,\,\langle \nu_t,\,\psi\rangle_\R}(\cdot)\right]=Q^\rho_{\langle S_t\mu_0,\,\phi\rangle_\R,\,\langle S_t\nu_0,\,\psi\rangle_\R}(\cdot)
\end{equation}
for all $t\ge0$ and test functions $\phi,\psi\in\bigcup_{\lambda>0}\calC_\lambda^+(\R)$ (resp.\ $\bigcup_{\lambda>0}\calC_{-\lambda}^+(\R)$). 
\end{lemma}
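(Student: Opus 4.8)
The plan is to transfer the discrete identity \eqref{eq:fixed time distribution_discrete} to continuous space by means of the diffusive scaling limit of Theorem~\ref{thm:conv}. Fix $t>0$ and test functions $\phi,\psi$ as in the statement (the case $t=0$ being immediate, since $S_0$ is the identity). For each $n\in\N$ let $(u_0^\ssup{n},v_0^\ssup{n})$ be the discrete initial conditions from \eqref{defn:variabel initial conditions}, let $(u_s^\ssup{n},v_s^\ssup{n})_{s\ge0}$ be the corresponding solution of $\mathrm{dSBM}(\rho,\infty)$, and let $(\mu_s^\ssup{n},\nu_s^\ssup{n})_{s\ge0}$ be the rescaled process from \eqref{defn:rescaling}. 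Recalling $\phi^\ssup{n}$ from \eqref{eq:phi^n} and the scaling relation \eqref{eq:rescaling}, the (already extended) discrete formula \eqref{eq:fixed time distribution_discrete}, applied to $(u^\ssup{n},v^\ssup{n})$ at time $n^2t$ with test functions $\phi^\ssup{n},\psi^\ssup{n}$, becomes
\[
\E_{\mu_0^\ssup{n},\nu_0^\ssup{n}}\bigl[Q^\rho_{\langle\mu_t^\ssup{n},\,\phi\rangle_\R,\,\langle\nu_t^\ssup{n},\,\psi\rangle_\R}(\cdot)\bigr]
= Q^\rho_{\langle {^dS}_{n^2t}u_0^\ssup{n},\,\phi^\ssup{n}\rangle_\Z,\,\langle {^dS}_{n^2t}v_0^\ssup{n},\,\psi^\ssup{n}\rangle_\Z}(\cdot),
\]
the base points being nonnegative because $\mu_t^\ssup{n},\nu_t^\ssup{n},\phi,\psi\ge0$.

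First I would identify the limit of the right-hand side as $n\to\infty$. By \eqref{conv variabel initial conditions} together with the convergence of the diffusively rescaled discrete heat semigroup to the continuous one (Lemma~\ref{lem:semigroup convergence}), one has $\langle {^dS}_{n^2t}u_0^\ssup{n},\phi^\ssup{n}\rangle_\Z\to\langle S_t\mu_0,\phi\rangle_\R$ and likewise $\langle {^dS}_{n^2t}v_0^\ssup{n},\psi^\ssup{n}\rangle_\Z\to\langle S_t\nu_0,\psi\rangle_\R$. Since $(x,y)\mapsto Q^\rho_{x,y}$ is continuous from $(\R^+)^2$ into the space of probability measures on $\R^2$ equipped with the topology of weak convergence, the right-hand side of the display converges weakly to $Q^\rho_{\langle S_t\mu_0,\,\phi\rangle_\R,\,\langle S_t\nu_0,\,\psi\rangle_\R}(\cdot)$, which is exactly the right-hand side of \eqref{eq:fixed time distribution}.

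Next I would pass to the limit on the left-hand side. By Theorem~\ref{thm:conv}, $(\mu_s^\ssup{n},\nu_s^\ssup{n})_{s\ge0}\to(\mu_s,\nu_s)_{s\ge0}$ weakly in the Meyer--Zheng topology, so by \cite[Thm.~5]{MZ84} (see also \cite[Thm.~1.1(b)]{Kurtz91}) there is a subsequence (not relabelled) and a set $I\subseteq(0,\infty)$ of full Lebesgue measure along which the finite-dimensional distributions converge weakly. Fix $t\in I$. For any bounded continuous $g:\R^2\to\R$, the functional $(\mu,\nu)\mapsto Q^\rho_{\langle\mu,\,\phi\rangle_\R,\,\langle\nu,\,\psi\rangle_\R}(g)$ on $\calM_\tem(\R)^2$ (resp.\ $\calM_\rap(\R)^2$) is bounded by $\|g\|_\infty$ and continuous, being the composition of the continuous pairings with $(x,y)\mapsto Q^\rho_{x,y}$ and with integration against $g$. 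Hence weak convergence of $(\mu_t^\ssup{n},\nu_t^\ssup{n})$ gives $\E_{\mu_0^\ssup{n},\nu_0^\ssup{n}}[Q^\rho_{\langle\mu_t^\ssup{n},\,\phi\rangle_\R,\,\langle\nu_t^\ssup{n},\,\psi\rangle_\R}(g)]\to\E_{\mu_0,\nu_0}[Q^\rho_{\langle\mu_t,\,\phi\rangle_\R,\,\langle\nu_t,\,\psi\rangle_\R}(g)]$; combined with the previous paragraph this proves \eqref{eq:fixed time distribution} for all $t\in I$.

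It remains to remove the exceptional null set of times, and this is the only genuinely delicate point — a recurring feature of the Meyer--Zheng arguments in this paper, handled exactly as in Lemmas~\ref{lem:moments 1} and~\ref{lem:preservation of SST 2}. For arbitrary $t>0$ choose $t_k\downarrow t$ with $t_k\in I$. Right-continuity of the paths of $(\mu_s,\nu_s)_{s\ge0}$ in $\calM_\tem(\R)^2$ (resp.\ $\calM_\rap(\R)^2$) and strong continuity of the heat semigroup give $(\langle\mu_{t_k},\phi\rangle_\R,\langle\nu_{t_k},\psi\rangle_\R)\to(\langle\mu_t,\phi\rangle_\R,\langle\nu_t,\psi\rangle_\R)$ and $(\langle S_{t_k}\mu_0,\phi\rangle_\R,\langle S_{t_k}\nu_0,\psi\rangle_\R)\to(\langle S_t\mu_0,\phi\rangle_\R,\langle S_t\nu_0,\psi\rangle_\R)$; applying once more the continuity of $(x,y)\mapsto Q^\rho_{x,y}$ up to the boundary of the quadrant together with bounded convergence extends \eqref{eq:fixed time distribution} to every $t>0$.
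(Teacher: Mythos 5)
Your proof is correct and follows essentially the same route as the paper's: apply the (already extended) discrete identity \eqref{eq:fixed time distribution_discrete} to the rescaled processes, send the right-hand side to its continuum limit via Lemma~\ref{lem:semigroup convergence} and continuity of $(x,y)\mapsto Q^\rho_{x,y}$, and pass to the limit on the left via Theorem~\ref{thm:conv}. The only difference is that you spell out the Meyer--Zheng details (finite-dimensional convergence on a full-measure set of times and the extension by right-continuity of paths) that the paper's two-line proof leaves implicit.
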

\begin{proof} 
We define a sequence of approximating discrete processes as in \eqref{defn:variabel initial conditions}-\eqref{defn:rescaling}.
By \eqref{eq:rescaling} and \eqref{eq:fixed time distribution_discrete}, we have for all $n\in\N$
\[\bal
\E_{\mu_0^\ssup{n},\nu_0^\ssup{n}}\left[Q^\rho_{\left\langle \mu^\ssup{n}_{t},\,\phi\right\rangle_\R,\,\left\langle \nu^\ssup{n}_{t},\,\psi\right\rangle_\R}(\cdot)\right]
&=\E_{u_0^\ssup{n},v_0^\ssup{n}}\left[Q^\rho_{\left\langle u^\ssup{n}_{n^2t},\,\phi^\ssup{n})\right\rangle_\Z,\,\left\langle v^\ssup{n}_{n^2t},\,\psi^\ssup{n}\right\rangle_\Z}(\cdot)\right]\\
&=Q^\rho_{\left\langle S_{n^2t}u_0^\ssup{n},\,\phi^\ssup{n}\right\rangle_\Z,\,\left\langle S_{n^2t}v_0^\ssup{n},\,\psi^\ssup{n}\right\rangle_\Z}(\cdot).
\eal\]
Applying Theorem \ref{thm:conv}, Lemma \ref{lem:semigroup convergence} and using continuity of $(x,y)\mapsto Q^\rho_{x,y}$, we can take the limit on both sides to obtain \eqref{eq:fixed time distribution}.
\end{proof}

\begin{corollary}\label{cor:strict positivity}
Let $\rho\in(-1,0)$ and $(\mu_0,\nu_0)\in\calM_\tem(\R)^2$ or $\calM_\rap(\R)^2$.
Assume further that $\mu_0+\nu_0$ is not the zero measure. Then we have for 
all $t>0$ that
\[\p_{\mu_0,\nu_0}[\supp(\mu_t+\nu_t)=\R]=1.\]
\end{corollary}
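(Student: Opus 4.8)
The plan is to deduce strict positivity of $\mu_t+\nu_t$ in Corollary~\ref{cor:strict positivity} from the fixed-time distribution formula of Lemma~\ref{lem:fixed time distribution}, mimicking the argument given for the discrete model in the paragraph following \eqref{eq:fixed time distribution_discrete_2}; the only additional ingredient is a routine reduction of the support condition to countably many statements about integrals against compactly supported test functions.

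First I would fix a countable family $(\phi_j)_{j\in\N}\subseteq\calC_c^+(\R)$ with $\phi_j\not\equiv0$ such that every non-empty open subset of $\R$ contains $\supp(\phi_j)$ for some $j$ --- concretely, for each $q\in\Q$ and $n\in\N$ take a fixed bump function supported in the ball $B_{1/n}(q)$ and strictly positive at $q$. Since every compactly supported continuous function belongs to $\bigcup_{\lambda>0}\calC_\lambda^+(\R)$ (resp.\ to $\bigcup_{\lambda>0}\calC_{-\lambda}^+(\R)$ in the rapidly decreasing case), each $\phi_j$ is an admissible test function in Lemma~\ref{lem:fixed time distribution}. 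For any locally finite measure $m\ge0$ on $\R$ one has $\supp(m)=\R$ if and only if $\langle m,\phi_j\rangle_\R>0$ for all $j\in\N$: if $\langle m,\phi_j\rangle_\R=0$ then $m$ vanishes on the non-empty open set $\{\phi_j>0\}$, so $\supp(m)\ne\R$; conversely, if $\supp(m)\ne\R$ then $m$ vanishes on some non-empty open set, which by construction contains $\supp(\phi_j)$ for some $j$, forcing $\langle m,\phi_j\rangle_\R=0$. Applying this with $m=\mu_t+\nu_t$, it suffices to show $\p_{\mu_0,\nu_0}[\langle\mu_t+\nu_t,\phi_j\rangle_\R=0]=0$ for each fixed $j$ and then to intersect over the countably many $j\in\N$.

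So fix $\phi:=\phi_j$. Since $\mu_t,\nu_t\ge0$, the event $\{\langle\mu_t+\nu_t,\phi\rangle_\R=0\}$ coincides with $\{(\langle\mu_t,\phi\rangle_\R,\langle\nu_t,\phi\rangle_\R)=(0,0)\}$. Recalling that the exit measure satisfies $Q^\rho_{x,y}(\{(0,0)\})=\1_{\{(x,y)=(0,0)\}}$ (it has no atom at zero unless $(x,y)=(0,0)$, as noted before Lemma~\ref{lem:fixed time distribution}), I would apply Lemma~\ref{lem:fixed time distribution} with $\psi:=\phi$ to get
\begin{align*}
\p_{\mu_0,\nu_0}\big[\langle\mu_t,\phi\rangle_\R=\langle\nu_t,\phi\rangle_\R=0\big]
&=\E_{\mu_0,\nu_0}\big[Q^\rho_{\langle\mu_t,\phi\rangle_\R,\,\langle\nu_t,\phi\rangle_\R}(\{(0,0)\})\big]\\
&=Q^\rho_{\langle S_t\mu_0,\phi\rangle_\R,\,\langle S_t\nu_0,\phi\rangle_\R}(\{(0,0)\})
=\1_{\{\langle S_t\mu_0,\phi\rangle_\R=\langle S_t\nu_0,\phi\rangle_\R=0\}}.
\end{align*}
Finally, by the symmetry of the heat semigroup and Tonelli's theorem,
\[
\langle S_t\mu_0,\phi\rangle_\R+\langle S_t\nu_0,\phi\rangle_\R=\langle\mu_0+\nu_0,S_t\phi\rangle_\R>0,
\]
because $S_t\phi(x)=\int p_t(x-y)\phi(y)\,dy>0$ for every $x\in\R$ (as $\phi\ge0$, $\phi\not\equiv0$ and the heat kernel is strictly positive) and $\mu_0+\nu_0$ is a non-zero nonnegative measure. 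Hence the indicator on the right-hand side above equals $0$, so $\p_{\mu_0,\nu_0}[\langle\mu_t+\nu_t,\phi_j\rangle_\R=0]=0$, and intersecting over $j\in\N$ yields $\p_{\mu_0,\nu_0}[\supp(\mu_t+\nu_t)=\R]=1$.

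I do not expect a genuine obstacle: conditionally on Lemma~\ref{lem:fixed time distribution}, the argument is short. The only points requiring minor care are the topological reduction of ``$\supp(\mu_t+\nu_t)=\R$'' to countably many test-function conditions, the verification that compactly supported test functions are admissible in Lemma~\ref{lem:fixed time distribution}, and the identity $Q^\rho_{x,y}(\{(0,0)\})=\1_{\{(x,y)=(0,0)\}}$, which is precisely the ``no atom at zero'' property already recorded in the text.
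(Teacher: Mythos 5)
Your argument is correct and follows the same route as the paper: both prove, for each compactly supported nonnegative test function $\phi\ne0$, that $\p_{\mu_0,\nu_0}[\langle\mu_t+\nu_t,\phi\rangle_\R=0]=0$ by passing $Q^\rho$ through Lemma~\ref{lem:fixed time distribution} and invoking that $Q^\rho_{x,y}$ has an atom at the origin iff $(x,y)=(0,0)$, then intersect over a countable separating family to obtain $\supp(\mu_t+\nu_t)=\R$ a.s. Your version is if anything slightly tidier (identity $Q^\rho_{x,y}(\{(0,0)\})=\1_{\{(x,y)=(0,0)\}}$ in place of the paper's inequality, and the explicit Tonelli step $\langle S_t\mu_0,\phi\rangle+\langle S_t\nu_0,\phi\rangle=\langle\mu_0+\nu_0,S_t\phi\rangle>0$ where the paper leaves this implicit), but it is not a genuinely different argument.
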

\begin{proof}
Fix $t>0$ and consider a test function $\phi\in\calC_c^+$, $\phi\ne0$. Then since $\langle\mu_t+\nu_t,\phi\rangle_\R =0$ implies 
$Q^\rho_{\langle \mu_t,\,\phi\rangle_\R,\,\langle \nu_t,\,\phi\rangle_\R}(0,0)=1$, we have by Lemma \ref{lem:fixed time distribution} that
\[\bal
\p_{\mu_0,\nu_0}\left[\langle\mu_t+\nu_t,\phi\rangle_\R =0\right]&=\E_{\mu_0,\nu_0}\left[\1_{\langle\mu_t+\nu_t,\phi\rangle_\R =0}\,Q^\rho_{\langle \mu_t,\,\phi\rangle_\R,\,\langle \nu_t,\,\phi\rangle_\R}(0,0)\right]\\
&\le\E_{\mu_0,\nu_0}\left[Q^\rho_{\langle \mu_t,\,\phi\rangle_\R,\,\langle \nu_t,\,\phi\rangle_\R}(0,0)\right]=Q^\rho_{\langle S_t\mu_0,\,\phi\rangle_\R,\,\langle S_t\nu_0,\,\phi\rangle_\R}(0,0)=0.
\eal\]
Here we used that $Q^\rho_{x,y}$ has an atom at zero iff $(x,y)=(0,0)$.
Since $\phi$ was arbitrary, this implies in particular that for all intervals $[a,b]$ with rational endpoints $a,b\in\Q$ we have $\p_{\mu_0,\nu_0}[(\mu_t+\nu_t)[a,b]>0]=1$. Intersecting  the countably many sets of probability one, we see that $\supp(\mu_t+\nu_t)=\R$ almost surely.
\end{proof}

Before turning to the proof of Theorem \ref{thm:SST}, we now sketch a proof of an analogous version for the discrete-space case. 
Again let $(u_t,v_t)_{t\ge0}$ denote the solution to $\mathrm{dSBM}(\rho,\infty)_{u_0,v_0}$, with $(u_0,v_0)\in\calM_\tem(\Z)^2$. 
We consider $(u_t,v_t)$ as elements of $\calM_\tem(\R)^2$ which are concentrated on the lattice $\Z\subseteq\R$. 
The support is given by $\supp(u_t)=\{k\in\Z:u_t(k)>0\}$, and $R(u_t)$, $L(v_t)$ are defined as before. 

\begin{prop}\label{prop:SST_SIP_discrete}
Let $\rho\in(-1,1)$ and consider initial conditions $(u_0,v_0)\in\calM_\tem(\Z)^2$ or $\cM_\rap(\Z)^2$
such that $R(u_0)<L(v_0)$.
Then almost surely the initial ordering of types is preserved for all times, i.e.
\begin{equation}\label{eq:discrete_perservation} \p_{u_0,v_0}[R(u_t)<L(v_t)\text{ for all }t>0]=1. \end{equation}
Assume in addition that $u_0+v_0$ is not identically zero. Then we have the discrete analogue of the `single-point interface' property for all fixed times, in the sense that
\[\p_{u_0,v_0}[R(u_t)=L(v_t)-1]=1,\qquad t>0.\]
\end{prop}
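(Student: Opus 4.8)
The plan is to prove Proposition~\ref{prop:SST_SIP_discrete} in two stages, following the structure already laid out in the excerpt. For the preservation of order \eqref{eq:discrete_perservation}, I would first reduce the claim to a statement about the finite-rate approximations $\mathrm{dSBM}(\rho,\gamma)_{u_0,v_0}$. For finite $\gamma$ the solution $(u_t^\sse{\gamma},v_t^\sse{\gamma})$ has continuous paths and solves the SDE system \eqref{eqn:sde_discrete}; preservation of order is essentially a comparison/coupling argument: the colony $u_t^\sse{\gamma}(k)$ can only become positive to the right of $R(u_0)$ by `leaking in' via the discrete Laplacian from colonies where $u^\sse{\gamma}$ is already positive, and because $R(u_0)<L(v_0)$ there is initially a gap of empty colonies, so the types cannot interchange order without first both becoming positive at a common site — but if $u^\sse{\gamma}(k)$ and $v^\sse{\gamma}(k)$ are both positive, the branching term is active and (for $\rho<0$, after passing to $\gamma=\infty$) one of them is annihilated. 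More directly, I would work with $\mathrm{dSBM}(\rho,\infty)$ itself: by Theorem~\ref{thm:MPinf_discrete}, at each fixed time $t>0$ and each site $k$ at most one of $u_t(k),v_t(k)$ is nonzero. The key structural input is the jump-type SDE description of $\mathrm{dSBM}(\rho,\infty)$ from \cite{KM11b, DM12}: the process $u$ (resp.\ $v$) changes only through migration between neighbouring sites, and a positive mass of type $u$ cannot jump past a site currently occupied by type $v$. Combining this with right-continuity of paths and Lemma~\ref{lem:lower-/upper semicontinuity} applied pathwise, one propagates the initial strict ordering $R(u_0)<L(v_0)$ to all rational times and then, by right-continuity and semicontinuity, to all times.

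For the second part — the discrete single-point interface $R(u_t)=L(v_t)-1$ at each fixed $t>0$ — the argument combines three ingredients: the preservation of order just established, the strict-positivity statement \eqref{eq:fixed time distribution_discrete_2} (which gives $u_t(k)+v_t(k)>0$ a.s.\ for every fixed $k$ whenever $(S_tu_0(k),S_tv_0(k))\ne(0,0)$), and the separation-of-types property \eqref{eq:dsep}. Concretely: fix $t>0$ and $k\in\Z$. Since $u_0+v_0\not\equiv0$, the heat semigroup strictly positivises, so $({^dS}_tu_0(k),{^dS}_tv_0(k))\ne(0,0)$ for every $k$; hence by \eqref{eq:fixed time distribution_discrete_2} and the fact that $Q^\rho_{x,y}$ has no atom at $(0,0)$ when $(x,y)\ne(0,0)$, we get $u_t(k)+v_t(k)>0$ a.s. Intersecting over the countably many $k\in\Z$, almost surely every site carries positive mass of \emph{some} type. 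By preservation of order (on a set of probability one), there is a (random) threshold $I_t$ such that $u_t(k)=0$ for $k>I_t$ and $v_t(k)=0$ for $k<I_t$ — more precisely, $\supp(u_t)\subseteq(-\infty,R(u_t)]$ and $\supp(v_t)\subseteq[L(v_t),\infty)$ with $R(u_t)<L(v_t)$, so $R(u_t)\le L(v_t)-1$. If $R(u_t)\le L(v_t)-2$, then the site $k=R(u_t)+1$ would satisfy $u_t(k)=0$ (since $k>R(u_t)$) and $v_t(k)=0$ (since $k<L(v_t)$), contradicting strict positivity at that site. Hence $R(u_t)=L(v_t)-1$ a.s.

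I expect the main obstacle to be the \textbf{rigorous justification of the pathwise preservation of order} for the infinite-rate discrete model, i.e.\ \eqref{eq:discrete_perservation}. The delicate point is that the infinite-rate limit is only obtained in the Meyer–Zheng topology, which controls finite-dimensional distributions at Lebesgue-almost-every time but not pathwise behaviour; so `preservation for all $t>0$' is not an automatic consequence of `preservation at a.e.\ $t$'. One route is to invoke the jump-SDE representation of \cite{KM11b,DM12}, where the process is genuinely a càdlàg solution of an SDE driven by Poisson/Lévy noise and migration, and argue directly that a jump of type-$u$ mass to a site cannot occur if that site currently hosts type $v$ — this requires checking that the jump kernel respects the order, which follows from the explicit form of the jumps (mass moves between neighbours, and a `collision' of the two types at one site is instantaneously resolved in favour of one type by the exit-measure $Q^\rho$). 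A cleaner alternative, which I would actually adopt, mirrors what the authors do in the continuous case: establish preservation first for the finite-rate model $\mathrm{dSBM}(\rho,\gamma)$ by a comparison argument on the SDE \eqref{eqn:sde_discrete} (using that empty sites stay empty until mass migrates in, and that the `interface' site, once both types are present, is governed by a martingale problem whose increments force one type to extinction), and then pass to the $\gamma\to\infty$ limit using Corollary~\ref{cor:preservation of SST 1} and an argument analogous to Lemma~\ref{lem:preservation of SST 2} — extracting a subsequence converging in finite-dimensional distributions along a full-measure time set $I$, applying Skorokhod representation and the semicontinuity lemma pathwise on $I$, extending to a dense countable $J\subseteq I$, and finally to all $t>0$ by right-continuity of the càdlàg paths. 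The strict-positivity and separation inputs for the second half are comparatively routine given \eqref{eq:fixed time distribution_discrete_2} and \eqref{eq:dsep}.
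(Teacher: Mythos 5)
Your treatment of the second half (the single-point interface) is correct and, modulo detail, matches what the paper does: you use the same two ingredients, strict positivity of $u_t+v_t$ at every site via \eqref{eq:fixed time distribution_discrete_2} and the already-established strict ordering, and the arithmetic is right. No issue there.

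The first half, however, contains a genuine gap, and unfortunately it is precisely in the step you say you would ``actually adopt.'' You propose to prove preservation of order for the \emph{finite-rate} model $\mathrm{dSBM}(\rho,\gamma)$ and then pass to the $\gamma\to\infty$ limit via Corollary~\ref{cor:preservation of SST 1} and a Lemma~\ref{lem:preservation of SST 2}-style argument. But the finite-rate model does \emph{not} preserve the ordering of types. For $\gamma<\infty$, the discrete heat drift $\frac{1}{2}{^d\Delta}u^\sse{\gamma}$ immediately pushes mass into previously empty sites: by the Green function representation (cf.\ \eqref{MP1a dual}--\eqref{Cov1a dual} and Cor.~A.4 of~\cite{BHO15}), $\E[u^\sse{\gamma}_t(k)]={^dS}_t u_0(k)>0$ and $\E[v^\sse{\gamma}_t(k)]={^dS}_t v_0(k)>0$ for \emph{every} $k\in\Z$ and $t>0$ whenever $u_0,v_0$ are nontrivial, and generically both densities are strictly positive at every site a.s. Thus $R(u^\sse{\gamma}_t)=+\infty$ and $L(v^\sse{\gamma}_t)=-\infty$ for $t>0$, and there is nothing to pass to the limit. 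Your intuition that ``empty sites stay empty until mass migrates in'' is correct, but mass migrates in instantaneously. Separation of types, and hence any hope of order preservation, only appears in the $\gamma=\infty$ limit.

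What the paper actually uses is a different approximating sequence: the Trotter-type approximation of \cite{KM11b} (eqs.\ (2.8)--(2.9) there, extended to $\rho\ne0$ in \cite{DM12}), which alternates deterministic heat flow with an instantaneous resolution of each site via the exit measure $Q^\rho$, so that each approximating process preserves separation of types by construction, and hence visibly preserves the order. Crucially, by \cite[Thm.\ 3.1]{KM11b} these approximations converge to $\mathrm{dSBM}(\rho,\infty)$ in the \emph{Skorokhod} topology (not just Meyer--Zheng), so Lemma~\ref{lem:preservation of SST 2} applies and gives $R(u_t)\le L(v_t)$ for all $t>0$ a.s.; the strict inequality then follows from the separation-of-types property \eqref{eq:dsep}. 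Your remaining alternative (the jump-SDE representation) is at least pointed in a workable direction, but as written it is a sketch that asserts rather than verifies that the jump kernel respects the order, and you would still need to supply the pathwise argument that the paper gets ``for free'' from the Trotter construction together with Skorokhod-topology convergence.
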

\begin{proof}
We only sketch the argument: Recall the approximation procedure used in \cite{KM11b} in order to construct the discrete model $\mathrm{dSBM}(\rho,\infty)_{u_0,v_0}$ for $\rho=0$ (see \cite{DM12} for the extension to $\rho\ne0$). Inspection of the definition in \cite{KM11b}, p. 15, eqs. (2.8)-(2.9), shows that the
initial ordering of types is preserved by the dynamics of the approximating processes, thus~\eqref{eq:discrete_perservation} holds for the latter. 
By \cite[Thm.\ 3.1]{KM11b}, the approximating processes converge to $(u_t,v_t)_{t\ge0}$ w.r.t.\ the Skorokhod topology.
Applying Lemma \ref{lem:preservation of SST 2}, 
we infer that the limit satisfies $\p_{u_0,v_0}[R(u_t)\le L(v_t)\text{ for all }t>0]=1$. 
However, the inequality is in fact strict because we know already by the usual separation-of-types property \eqref{eq:dsep} of $\mathrm{dSBM}(\rho,\infty)_{u_0,v_0}$ that almost surely $u_t(k)v_t(k)=0$ for all $k\in\Z$, $t>0$. 

In addition, if $u_0+v_0$ does not vanish, then for all $t>0$, almost surely $u_t+v_t$ is strictly positive by \eqref{eq:fixed time distribution_discrete_2}.
But $R(u_t)< L(v_t)$ together with $u_t+v_t>0$ implies $R(u_t)=L(v_t)-1$.
\end{proof}

\begin{proof}[Proof of Theorem \ref{thm:SST}]
Consider $\rho\in(-1,0)$ and 
initial conditions $(\mu_0,\nu_0)\in\cM_\tem(\R)^2$ which are mutually singular and such that $R(\mu_0)\le L(\nu_0)$. 

If $R(\mu_0)<L(\nu_0)$, then for the initial conditions \eqref{defn:variabel initial conditions} for the discrete process we also have $R(u_0^\ssup{n})<L(v_0^\ssup{n})$, provided $n\in\N$ is large enough.
Now consider the case $R(\mu_0)= L(\nu_0)=:I_0$, where we can assume without loss of generality that $I_0=0$. 
If $\mu_0(\{0\})=0$, then \eqref{defn:variabel initial conditions} implies $R(u_0^\ssup{n})=-1<0=L(v^\ssup{n}_0)$.
If on the other hand $\mu_0(\{0\})>0$, then we must have $\nu_0(\{0\})=0$ since by assumption the measures $\mu_0$ and $\nu_0$ are mutually singular and thus cannot both have an atom at $I_0=0$. 
Modifying the definition of the discrete initial conditions according to \eqref{defn:variabel initial conditions 2}, we then again have $R(u_0^\ssup{n})=-1<0=L(v^\ssup{n}_0)$.
Thus in both cases Proposition~\ref{prop:SST_SIP_discrete} is applicable, and using the definition \eqref{defn:rescaling} of the approximating processes we get that almost surely
\[R(\mu_t^\ssup{n})\le L(\nu_t^\ssup{n})\]
for all $t>0$ and $n\in\N$. 
Another application of Lemma \ref{lem:preservation of SST 2} in combination with Theorem~\ref{thm:conv} concludes the proof of the `preservation of order of types'-property \eqref{proof:SST1}.

For the rest of the proof, we fix $t>0$. 
As for the discrete model, the `single-point interface' property \eqref{proof:SST2} is now a simple consequence of \eqref{proof:SST1} together with strict positivity from Corollary \ref{cor:strict positivity}.
We have to show that $\mu_t$ and $\nu_t$ are mutually singular almost surely.
Since $I_t:=R(\mu_t)=L(\nu_t)$ a.s., it is clear by the definition of the measure-theoretic support that $\mu_t((I_t,\infty))=0$ and $\nu_t((-\infty,I_t))=0$ a.s.
It only remains to rule out the possibility that both $\mu_t$ and $\nu_t$ have an atom at $I_t$ with positive probability. Choose any 
strictly positive test function 
$\phi\in\bigcup_{\lambda>0}\cC_\lambda^+(\R)$, $\phi(\cdot)>0$. Then we have
\begin{align*}
&\int_\R \phi(x)\,\E_{\mu_0,\nu_0}\left[S_\eps\mu_t(x) S_\eps\nu_t(x)\right]dx\\
&\quad=\int_\R \phi(x)\,\E_{\mu_0,\nu_0}\left[\int_\R p_\eps(x-y)\mu_t(dy)\int_\R p_\eps(x-z)\nu_t(dz)\right]dx\\
&\quad\ge\int_\R \phi(x)\,\E_{\mu_0,\nu_0}\left[p_\eps(x-I_t)\mu_t(\{I_t\})\,p_\eps(x-I_t)\nu_t(\{I_t\})\right]dx\\
&\quad=\E_{\mu_0,\nu_0}\left[\mu_t(\{I_t\})\nu_t(\{I_t\})\int_\R \phi(x)\, p_\eps(x-I_t)^2\,dx\right]\\
&\quad=\frac{1}{2\sqrt{\pi\eps}}\,\E_{\mu_0,\nu_0}\left[\mu_t(\{I_t\})\nu_t(\{I_t\}) \int_\R \phi(x/\sqrt{2})\,p_\eps(x-\sqrt{2}I_t)\,dx\right].
\end{align*}
In the above display, the LHS goes to zero by the separation-of-types property \eqref{eq:csep}, while the integral inside the expectation on the RHS converges to $\phi(I_t)>0$ a.s. This leads to a contradiction unless we have $\mu_t(\{I_t\})\nu_t(\{I_t\})=0$ a.s. Hence $\mu_t$ and $\nu_t$ cannot both have an atom at $I_t$, and our proof is complete.
\end{proof}

\appendix
\section{Appendix}
\subsection{Notation and spaces of functions and measures}\label{appendix0}

In this appendix  we have collected our notation and we recall some well-known facts concerning the spaces of functions and measures employed throughout the paper. Most of the material in this subsection can also be found e.g. in \cite{Dawsonetal2002}, \cite{Dawsonetal2003} or \cite{EF04}.
We can develop the notation for both the discrete and the continuous setting simultaneously,
so throughout we let $\cS=\Z^d$ for some $d\in\N$ or $\cS=\R$.

For $\lambda \in \mathbb{R}$, let 
\[\phi_{\lambda}(x) := e^{- \lambda |x|},\quad  x \in \cS,\] 
and for $f: \cS\rightarrow \mathbb{R}$ define
\[|f|_{\lambda} := ||f / \phi_{\lambda}||_{\infty},\] 
where $||\cdot||_{\infty}$ is the supremum norm. 
We denote by $\mathcal{B}_{\lambda}(\cS)$ the space of all measurable
functions $f: \cS \rightarrow \mathbb{R}$ such that $|f|_{\lambda}
<\infty$ and so that $f(x) / \phi_{\lambda}(x)$ has a finite limit
as $|x| \rightarrow \infty$. Next, we introduce the spaces of 
\textit{rapidly decreasing} and \textit{tempered} measurable 
functions, 
respectively, as
\begin{equation}
\mathcal{B}_{\rap}(\cS) := 
 \bigcap_{\lambda > 0} \mathcal{B}_{\lambda}(\cS) \quad \textrm{ and } \quad
\mathcal{B}_{\tem} :=
\bigcap_{\lambda > 0} \mathcal{B}_{- \lambda}(\cS) . 
\end{equation}

For $\cS = \R$, we write $\mathcal{C}_{\lambda}(\R), \mathcal{C}_{\rap}(\R), \mathcal{C}_{\tem}(\R)$ 
for the subspaces of continuous functions in $\calB_\lambda(\R), \calB_\rap(\R),\calB_\tem(\R)$ respectively. 
Moreover, if we additionally assume that all partial derivatives up to order $k\in\N$ exist and belong to $\calC_\lambda(\R),\calC_\rap(\R),\calC_\tem(\R)$, we write 
$\calC_\lambda^{(k)}(\R),\mathcal{C}_{\rap}^{(k)}(\R), \mathcal{C}_{\tem}^{(k)}(\R)$.
In order to formulate results for the continuous and discrete case simultaneously, it is convenient to employ all of these notations also for $\cS=\Z^d$,
where of course (by convention) the spaces just introduced coincide with $\calB_\lambda(\Z^d), \calB_\rap(\Z^d),\calB_\tem(\Z^d)$ respectively.
Finally, we will also use the space $\calC_c^\infty(\R)$ of infinitely differentiable functions with compact support. 

For each $\lambda \in \mathbb{R}$, the linear space $\mathcal{C}_{\lambda}(\cS)$ endowed 
with the norm $| \cdot |_{\lambda}$ is a separable Banach space, and 
the spaces $\mathcal{C}_{\rap}(\cS), \mathcal{C}_{\tem}(\cS)$ can be topologized by a suitable metric 
to turn them into Polish spaces, for the details see e.g.\ Appendix A.1 in~\cite{BHO15}.

If $\calF$ is any of the above spaces of functions, we denote by $\calF^+$  the subset of nonnegative elements of $\calF$.

Let $\mathcal{M}(\cS) $ denote the space of
(nonnegative) Radon 
measures on $\cS$.
For $\mu\in\calM(\cS)$ and a measurable function $f:\cS\to\R$, we will 
 denote the integral of $f$ with
respect to the measure $\mu$ (if it exists)
by any of the following notations
\[\langle \mu , f \rangle,\quad \int_\cS\mu(dx)\,f(x),\quad \int_\cS f(x)\,\mu(dx). \]
In the case of the Lebesgue measure $\ell$ on $\R$, 
we will simply write $dx$ in place of $\ell(dx)$. If $\mu\in\calM(\R)$ is absolutely continuous w.r.t. $\ell$, we will identify $\mu$ with its density, writing
\[\mu(dx)=\mu(x)\,dx.\]
Similarly, for $\mu \in \calM(\Z)$, we will often write $\mu(k) := \mu(\{ k\})$.

For $\lambda\in\R$, define 
\[\calM_\lambda(\cS):=\left\{\mu\in\calM(\cS):\langle\mu,\phi_\lambda\rangle < \infty\right\}\]
and introduce the spaces
\[\mathcal{M}_{\tem}(\cS) := \bigcap_{\lambda > 0}\calM_\lambda(\cS),\qquad \calM_\rap(\cS):=\bigcap_{\lambda>0}\calM_{-\lambda}(\cS) \]
of \textit{tempered} and \textit{rapidly decreasing measures} on $\cS$, respectively.
Again by defining suitable metrics it can be seen that these spaces are Polish.
Moreover,  $\mu_n\to\mu$ in $\calM_\tem(\cS)$
iff $\langle\mu_n,\varphi\rangle\to\langle\mu,\varphi\rangle$ for all $\varphi\in\bigcup_{\lambda > 0} \calC_{\lambda}(\cS)$.
We write $\mathcal{M}_f(\cS)$ for
the space of finite measures on
$\cS$ endowed with the topology of weak convergence. With this notation we have $\calM_\rap(\cS)\subseteq\calM_f(\cS)$. 
To topologize the space $\calM_\rap(\cS)$ we say that $\mu_n\to\mu$ in $\calM_\rap(\cS)$ iff $\mu_n\to\mu$ in $\calM_f(\cS)$ (w.r.t.\ the weak topology) and $\sup_{n\in\N}\langle\mu_n,\phi_\lambda\rangle<\infty$ for all $\lambda<0$ (see \cite{Dawsonetal2003}, p. 140). 

Finally, we remark that 
$\cB^+_\tem(\R)$ and thus also $\calC^+_\tem(\R)$ may be viewed as subspaces of $\calM_\tem(\R)$: Indeed, we can take a function $u\in\calB_\tem^+(\R)$ 
as a density w.r.t.\ Lebesgue measure and thus identify it with the tempered measure $u(x)\, dx$. 
Note, however, that the topology of $\calM_\tem(\R)$ restricted to $\calC_\tem^+(\R)$ is weaker than the topology on $\calC_\tem(\R)$ defined in Appendix A.1 in~\cite{BHO15}. 
The same relationship holds  between $\calC_\rap^+(\R)$ and $\calM_\rap(\R)$. 
In particular, we have \emph{continuous} embeddings $\calC^+_\tem(\R)\hookrightarrow\calM_\tem(\R)$ and $\calC_\rap^+(\R)\hookrightarrow\calM_\rap(\R)$. 

For $\cS=\Z^d$, we can identify any measure in $\cM_\tem(\Z^d)$ resp.\ $\cM_\rap(\Z^d)$ with its density w.r.t.\ counting measure, and moreover this density will be in $\cB_\tem^+(\Z^d)$ resp.\ $\cB_\rap^+(\Z^d)$, the reason being that a summable sequence is necessarily bounded. Thus we have equality of the \emph{sets} $\cM_\tem(\Z^d)=\cB_\tem^+(\Z^d)$ and $\cM_\rap(\Z^d)=\cB_\rap^+(\Z^d)$. Nevertheless, as in the continuous case, the topology on $\cM_\tem(\Z^d)$ resp.\ $\cM_\rap(\Z^d)$ introduced above is strictly weaker than the topology on $\cB_\tem^+(\Z^d)$ resp.\ $\cB_\rap^+(\Z^d)$ defined in \cite[Appendix A.1]{BHO15}.

\subsection{Semigroup estimates}\label{sec:semigroup}

Let $(p_t)_{t\ge0}$ denote the heat
kernel in $\mathbb{R}$ corresponding to $\frac{1}{2} \Delta$,
\begin{equation}
p_t(x)  = \frac{1}{(2 \pi  t)^{1/2}} \exp \left\{- \frac{|x|^2}{2  t} \right\}, 
\qquad t > 0, x \in \mathbb{R},
\end{equation}
and write $ (S_t)_{ t \geq 0}$ for the associated heat semigroup. 
Similarly, let $(^dS_t)_{t\ge0}$ denote the semigroup corresponding to a continuous-time simple random walk $(X_t)_{t \geq 0}$
with generator $\frac 12 {^d\Delta}$, the discrete Laplace operator as defined in~\eqref{def:discreteLaplace}, and discrete heat kernel ${^dp}_t$.

For $\mu\in\calM(\R)$ and $x\in\R$, let $S_t\mu(x):=\int_\R p_t(x-y)\,\mu(dy)$ and similarly for $^dS$. 
The following estimates are well known and can be proved as in Appendix A of \cite{Dawsonetal2003} (see also \cite[Lemma 6.2 (ii)]{Shiga94}):
\begin{lemma}\label{lemma estimates}
Fix $\lambda\in\R$ and $T>0$. 
\begin{itemize}
\item[a)] 
For all $\varphi\in\calB_{\lambda}^+(\R)$, we have
\begin{equation}\label{estimate 1}
\sup_{t\in[0,T]}S_t\varphi(x)\le C(\lambda,T)\,|\varphi|_\lambda\, \phi_\lambda(x),\qquad x\in\R.
\end{equation}
Moreover, there is a constant $C'(\lambda,T)>0$ such that
\begin{equation}\label{estimate 1a}
\inf_{t\in[0,T]}S_t\phi_{\lambda}(x)\ge C'(\lambda,T)\,\phi_{\lambda}(x) ,\qquad x\in\R.
\end{equation}
\item[b)] 
Let $0<\eps<T$. Then for
all $\mu\in\calM_\lambda(\R)$ we have 
\begin{equation}\label{estimate 2}
 \sup_{t\in[\eps,T]}S_t\mu(x)
 \le C(\lambda,T,\eps)\,\langle\mu,\phi_{\lambda}\rangle\,\phi_{-\lambda}(x),\qquad x\in\R.
\end{equation}
\end{itemize}
Therefore, the heat semigroup preserves the space $\calB_\lambda(\R)$ and maps $\calM_\lambda(\R)$ into $\calB_\lambda(\R)$.
\end{lemma}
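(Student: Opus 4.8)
The plan is to reduce all three estimates to elementary one‑dimensional Gaussian integrals, pulling the weight $\phi_\lambda$ out of the convolution $S_t$ by means of the (reverse) triangle inequality $\big|\,|x-z|-|x|\,\big|\le |z|$. Concretely, the starting point is the elementary bound, valid for every $\lambda\in\R$ and all $x,z\in\R$,
\[ \phi_\lambda(x)\,e^{-|\lambda|\,|z|}\ \le\ e^{-\lambda|x-z|}\ \le\ \phi_\lambda(x)\,e^{|\lambda|\,|z|}; \]
everything else is a matter of integrating against the heat kernel $p_t$ and keeping track of the dependence of the constants on $t$.

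For the upper bound \eqref{estimate 1} I would first note that $0\le\varphi\le|\varphi|_\lambda\,\phi_\lambda$ pointwise and $S_t$ is positivity-preserving, so it suffices to bound $\sup_{t\le T}S_t\phi_\lambda$. Writing $S_t\phi_\lambda(x)=\int_\R p_t(z)\,e^{-\lambda|x-z|}\,dz$ and using the right-hand side of the display gives $S_t\phi_\lambda(x)\le\phi_\lambda(x)\int_\R p_t(z)\,e^{|\lambda|\,|z|}\,dz\le\phi_\lambda(x)\cdot 2e^{\lambda^2 t/2}$, and $t\le T$ finishes it with $C(\lambda,T)=2e^{\lambda^2 T/2}$. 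For the lower bound \eqref{estimate 1a} I would use the left-hand side instead: $S_t\phi_\lambda(x)\ge\phi_\lambda(x)\int_\R p_t(z)\,e^{-|\lambda|\,|z|}\,dz$, and the remaining integral is bounded below uniformly for $t\in[0,T]$ — for instance by $e^{-|\lambda|}\inf_{t\in[0,T]}\int_{-1}^1 p_t(z)\,dz>0$, since $t\mapsto\int_{-1}^1 p_t(z)\,dz$ is continuous and strictly positive on the compact interval $[0,T]$ (with $p_0:=\delta_0$). For \eqref{estimate 2} I want $p_t(x-y)\le C(\lambda,T,\eps)\,\phi_{-\lambda}(x)\phi_\lambda(y)$ for $t\in[\eps,T]$, i.e.\ $p_t(x-y)\,e^{\lambda(|y|-|x|)}\le C$; since $\lambda(|y|-|x|)\le|\lambda|\,|x-y|$, this reduces to $p_t(z)\,e^{|\lambda|\,|z|}=\tfrac{1}{\sqrt{2\pi t}}\exp\{-\tfrac{z^2}{2t}+|\lambda|\,|z|\}\le\tfrac{1}{\sqrt{2\pi\eps}}\,e^{\lambda^2 t/2}\le\tfrac{1}{\sqrt{2\pi\eps}}\,e^{\lambda^2 T/2}$, using $\sup_{r\ge0}\big(-\tfrac{r^2}{2t}+|\lambda|r\big)=\tfrac{\lambda^2 t}{2}$ and then $t\ge\eps$ in the prefactor, $t\le T$ in the exponent; integrating in $\mu(dy)$ gives \eqref{estimate 2}.

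For the closure statements, applying \eqref{estimate 1} to $|\varphi|$ immediately yields $|S_t\varphi|_\lambda\le C(\lambda,T)\,|\varphi|_\lambda<\infty$; to see that $S_t\varphi(x)/\phi_\lambda(x)$ has a finite limit as $|x|\to\infty$ I would apply dominated convergence in $S_t\varphi(x)/\phi_\lambda(x)=\int_\R p_t(z)\,\tfrac{\varphi(x-z)}{\phi_\lambda(x-z)}\,\tfrac{\phi_\lambda(x-z)}{\phi_\lambda(x)}\,dz$, where $\phi_\lambda(x-z)/\phi_\lambda(x)\to e^{\lambda z}$ (resp.\ $e^{-\lambda z}$) as $x\to+\infty$ (resp.\ $-\infty$), $\varphi/\phi_\lambda$ has limits by assumption, and $\int_\R p_t(z)e^{\pm\lambda z}\,dz=e^{\lambda^2 t/2}<\infty$, the dominating function being supplied by the inequality in the first paragraph; hence $S_t$ preserves $\calB_\lambda(\R)$. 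Likewise \eqref{estimate 2} shows $S_t\mu/\phi_{-\lambda}$ is bounded for $t\ge\eps$, and the analogous dominated-convergence argument (Gaussian decay beats exponential growth, so the limit at infinity is $0$) shows $S_t$ maps $\calM_\lambda(\R)$ into $\calB_{-\lambda}(\R)$, the weight $\phi_{-\lambda}$ being the one appearing in \eqref{estimate 2}. I do not expect a genuine obstacle here; the one point needing care — and the reason for the precise form of the statement — is the \emph{uniformity of the constants in $t$}: the upper bounds and the Gaussian prefactor are exactly what force the ranges $t\le T$ in \eqref{estimate 1}--\eqref{estimate 1a} and $t\ge\eps$ in \eqref{estimate 2}, the latter being why $t=0$ must be excluded there.
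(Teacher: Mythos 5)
Your proof is correct; the paper offers no argument of its own (it simply cites Appendix~A of \cite{Dawsonetal2003} and \cite{Shiga94}), and your elementary Gaussian-kernel computations based on $\phi_\lambda(x)e^{-|\lambda||z|}\le e^{-\lambda|x-z|}\le\phi_\lambda(x)e^{|\lambda||z|}$ together with $\int_\R p_t(z)e^{az}\,dz=e^{a^2t/2}$ are exactly the standard argument used in those references, with the correct uniformity in $t$. You are also right to read the final mapping statement as $\calM_\lambda(\R)\to\calB_{-\lambda}(\R)$: the weight appearing in \eqref{estimate 2} is $\phi_{-\lambda}$, so the ``$\calB_\lambda(\R)$'' in the paper's closing sentence is a typo.
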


We have analogous estimates for the discrete space semigroup:
\begin{lemma}\label{lem:uniform preservation}
Let $\lambda\in\R$ and $T>0$. Then there are constants $c(\lambda,T), C(\lambda,T) > 0$ 
such that for all $n\in\N$, $k\in\Z$ and all $t \in [0,T]$
we have
\begin{equation}\label{eq:uniform preservation}
c(\lambda, T) \, \phi_{\lambda}(\tfrac{k}{n}) \leq {^dS}_{n^2t}(\phi_{\lambda}(\tfrac{\cdot}{n}))(k)\le C(\lambda, T)\,\phi_{\lambda}(\tfrac{k}{n}).
\end{equation}
\end{lemma}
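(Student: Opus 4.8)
\textbf{Proof proposal for Lemma \ref{lem:uniform preservation}.}

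The plan is to reduce the estimate to the standard local central limit theorem for the continuous-time simple random walk on $\Z$, together with uniform Gaussian tail bounds. First I would write out the left-hand side explicitly as
\[
{^dS}_{n^2t}(\phi_\lambda(\tfrac{\cdot}{n}))(k) = \sum_{j\in\Z} {^dp}_{n^2t}(k,j)\, e^{-\lambda|j|/n},
\]
where ${^dp}_{n^2t}(k,j) = \p_0[X_{n^2t} = j-k]$ is the transition kernel of the continuous-time walk. Changing the summation variable to $m = j-k$ and using $|j| \le |k| + |m|$ on one side and $|j|\ge |k|-|m|$ on the other, we get the two-sided estimate
\[
e^{-\lambda|k|/n}\,\E_0\big[e^{-\lambda|X_{n^2t}|/n}\big] \;\le\; {^dS}_{n^2t}(\phi_\lambda(\tfrac{\cdot}{n}))(k) \quad\text{(for $\lambda\ge0$, reversed for $\lambda<0$)},
\]
and
\[
{^dS}_{n^2t}(\phi_\lambda(\tfrac{\cdot}{n}))(k) \;\le\; e^{-\lambda|k|/n}\,\E_0\big[e^{+|\lambda||X_{n^2t}|/n}\big].
\]
So the claim reduces to showing that the quantities $\E_0[e^{\pm|\lambda||X_{n^2t}|/n}]$ are bounded above and below by positive constants depending only on $\lambda$ and $T$, uniformly over $n\in\N$ and $t\in[0,T]$; note I should handle the sign of $\lambda$ carefully, splitting $|j|\le|k|+|m|$ etc.\ so the roles of upper and lower bound swap, but the structure is the same.

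The key step is the uniform control of $\E_0[e^{c|X_{n^2t}|/n}]$ for $c = \pm|\lambda|$. For the upper bound, I would use that $X_s$ (continuous-time simple random walk) has explicit exponential moments: $\E_0[e^{\theta X_s}] = \exp(s(\cosh\theta - 1))$ by the generator computation, hence $\E_0[e^{\theta|X_s|}] \le 2\exp(s(\cosh\theta-1))$. Taking $\theta = c/n$ and $s = n^2t$ gives $\E_0[e^{c|X_{n^2t}|/n}] \le 2\exp(n^2 t(\cosh(c/n)-1)) \le 2\exp(n^2 t \cdot \tfrac{c^2}{2n^2}e^{|c|/n}) \le 2\exp(c^2 T\, e^{|c|})$ for $n\ge1$, using $\cosh x - 1 \le \tfrac{x^2}{2}e^{|x|}$. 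This is the desired $n$-uniform upper bound $C(\lambda,T)$. For the lower bound, Jensen's inequality gives $\E_0[e^{\theta|X_{n^2t}|/n}]\ge 1$ when $\theta\ge 0$; when $\theta<0$ (which occurs for the lower-bound side when $\lambda>0$, or the analogous situation for $\lambda<0$) I would instead use $\E_0[e^{-|\lambda||X_{n^2t}|/n}]\ge \p_0[|X_{n^2t}|\le n]\cdot e^{-|\lambda|}$ and then bound $\p_0[|X_{n^2t}|\le n] \ge \p_0[|X_{n^2t}|\le n\sqrt t\wedge n]$ from below by a positive constant (depending on $T$) using the central limit theorem / Chebyshev, since $\Var_0(X_{n^2t}) = n^2 t \le n^2 T$. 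Assembling the two sides yields constants $c(\lambda,T), C(\lambda,T) > 0$ as required.

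The main obstacle I anticipate is purely bookkeeping: keeping track of how the direction of the inequalities $|j| \le |k|+|m|$ versus $|j|\ge \big||k|-|m|\big|$ interacts with the sign of $\lambda$, so that in all four cases ($\lambda \gtrless 0$, upper/lower bound) one ends up needing a bound on $\E_0[e^{\pm|\lambda||X_{n^2t}|/n}]$ of the right type. There is no analytic difficulty — the exponential-moment formula for the continuous-time walk is exact and the diffusive scaling $s = n^2 t$, $\theta = \lambda/n$ is precisely tuned so that $s\theta^2$ stays bounded by $\lambda^2 T$. One should also note for completeness that when $t=0$ the statement is trivial (the semigroup is the identity), so the bounds need only be checked for $t\in(0,T]$, and the constants can be taken independent of how small $t$ is because the upper bound uses $t\le T$ and the lower bound uses $\Var \le n^2 T$ together with $\p_0[X_{n^2t}=0]\ge$ (something), actually even simpler: $\E_0[e^{-|\lambda||X|/n}] \ge e^{-|\lambda|}\p_0[|X_{n^2t}|\le n]$ and this probability is bounded below uniformly. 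I would close by remarking that the same argument works verbatim for $\Z^d$ with $d>1$ if needed, replacing $|X_{n^2t}|$ by the $\ell^1$-norm and using independence of coordinates.
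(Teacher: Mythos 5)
Your proposal is correct, but it takes a genuinely different route from the paper. Both arguments begin with the same reduction: writing ${^dS}_{n^2t}(\phi_\lambda(\tfrac{\cdot}{n}))(k)=\E_0\big[e^{-\lambda|k+X_{n^2t}|/n}\big]$ and peeling off the factor $\phi_\lambda(k/n)$ via the triangle inequality, so that everything comes down to two-sided bounds on $\E_0[e^{\pm|\lambda|\,|X_{n^2t}|/n}]$ uniformly in $n$ and $t\in[0,T]$. From there the paper is less self-contained: it imports the upper bound wholesale from \cite[Cor.\ A3(a)]{Dawsonetal2002}, and for the lower bound it invokes Donsker's theorem plus a Skorokhod-representation coupling of $X_{n^2t}/n$ with a Brownian motion, which forces a threshold $n_0$ and a separate (crude) argument $\E[e^{-\lambda|X^{\ssup{n}}_t|}]\ge\p\{X_s=0\ \forall s\le n^2T\}$ for the finitely many $n\le n_0$. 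Your route replaces both ingredients by elementary, fully explicit computations: the exact exponential-moment identity $\E_0[e^{\theta X_s}]=e^{s(\cosh\theta-1)}$ with the diffusive tuning $\theta=\lambda/n$, $s=n^2t$ for the upper bound, and Chebyshev at scale $n\sqrt{t}$ for the lower bound. This buys explicit constants and uniformity in $n$ from the outset, at no real cost. Two small repairs: (i) in the Chebyshev step the radius must carry a constant factor, e.g.\ $\p_0[|X_{n^2t}|>2n\sqrt{t}]\le\tfrac{n^2t}{4n^2t}=\tfrac14$, which gives $\E_0[e^{-|\lambda||X_{n^2t}|/n}]\ge\tfrac34 e^{-2|\lambda|\sqrt{T}}$ for all $n,t$ (your radii $n$ and $n\sqrt{t}\wedge n$ as written yield only the useless bounds $\p\le t$ resp.\ $\p\le1$); (ii) the inequality $|k+m|\ge|k|-|m|$ is what is actually needed (not $\big||k|-|m|\big|$ on the exponent side), but it holds and the sign bookkeeping works out in all four cases exactly as you anticipate.
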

\begin{proof} 
The upper bound is just a reformulation of \cite[Corollary A3(a)]{Dawsonetal2002}.
For the lower bound, let $X_t$ be a continuous time random walk with generator $\frac{1}{2}{^d\Delta}$
started in $0$. 
If we define $X_t^\ssup{n} := X_{n^2 t}/n$, then we know from Donsker's theorem that 
$(X_t^\ssup{n})_{t \geq 0}$ converges in distribution to a standard Brownian motion $B$.
Fix $T > 0$, by Skorokhod's representation theorem, we can choose a common probability space
$\p$ (with expectation $\E$)
such that $\sup_{t \in [0,T]} |X_t^\ssup{n} - B_t| \ra 0$ almost surely.

For $\la \geq 0$, we can estimate using the triangle inequality
\[ ^d S_{n^2 t} (\phi_\la(\cdot / n)) (k) = 
\E[ e^{-\la |X^\ssup{n}_t + k | } ] 
\geq e^{-\la |k/n|} \E [ e^{-\la |X_t^\ssup{n}|} ] . \]
So it remains to show that the expectation on the LHS is bounded from below uniformly in $t\in [0,T]$ and $n$.
Now, choose $n_0$ large enough such that for all $n \geq n_0$ 
\[ \p \Big\{ \sup_{t \in [0,T]} |X_t^\ssup{n} - B_t | \geq \tfrac 12 \Big\} \leq  \tfrac 12 \E [ e^{-\la |B_T|} ] . \]
Using the above estimate and the fact that $t \mapsto \E [e^{-\la |B_t|}]$ is decreasing, we thus obtain
for $n \geq n_0$
\[\begin{aligned} \E \big[e^{- \la |X^\ssup{n}| } \big]
 & \geq e^{- \frac 12\la } \E [ e^{-\la |B_t| }\1_{\{  \sup_{t \in [0,T]} |X_t^\ssup{n} - B_t | \leq \frac 12 \}} ] \\
& \geq e^{-\frac 12\la} \Big( \E [ e^{-\la |B_T| }]  - \p \Big\{ \sup_{t \in [0,T]} |X_t^\ssup{n} - B_t | \geq \tfrac 12 \Big\} \Big) 
\geq \tfrac 12  e^{-\frac 12\la} \E [ e^{-\la |B_T| }].
\end{aligned}\]
This proves the claim for $\la \geq 0$, since for any $n \leq n_0$ we can use the trivial estimate
\[ \E [ e^{-\la |X_t^\ssup{n}|} ] \geq \p \{ X^\ssup{n}_s = 0 \mbox{ for all } s \in [0,t] \} 
\geq \p \{ X^\ssup{n}_s = 0 \mbox{ for all } s \in [0,T] \} .
\, \]
so that we choose the  constant $c(\la, T)$ as claimed.

Finally, if $\la < 0$, we can  use that
\[ ^d S_{n^2 t} (\phi_\la (\cdot / n)) (k)  = \E[ e^{-\la |X_{n^2t} + k| /n } ] \geq  e^{-\la |k/n|}  \E [ e^{\la |X^\ssup{n}_t|}] , \]
and the latter expectation can be bounded uniformly in $n$ and $t \in [0,T]$ as in case $\la \geq 0$.
\end{proof}

We also need the following local central limit theorem, which is just a reformulation of Lemma 8 and Lemma 59 in \cite{Dawsonetal2002a}:
\begin{lemma}\label{lem:local CLT}
Let $p_t$ resp.\ $p_t^\ssup{2}$ denote the usual one- resp.\ two-dimensional heat kernel, and ${^dp}_t$ resp.\ ${^dp}_t^\ssup{2}$ its discrete counterpart. 
Then we have for all $t>0$ 
\begin{align}\bal\label{local CLT}
\lim_{n\to\infty}\sup_{x\in\Z}\left|n\,{^dp}_{n^2t}(x) - p_t(x/n)\right| = 0,\\
\lim_{n\to\infty}\sup_{x\in\Z^2}\left|n^2\,{^dp}^\ssup{2}_{n^2t}(x) - p_t^\ssup{2}(x/n)\right| = 0.
\eal\end{align}
\end{lemma}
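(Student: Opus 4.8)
The plan is to prove the one-dimensional limit in \eqref{local CLT} by Fourier inversion combined with dominated convergence, and then to deduce the two-dimensional limit from it via the product structure of the walk.

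First I would record the characteristic-function representations. Since the continuous-time simple random walk $(X_t)_{t\ge0}$ has generator $\tfrac12\,{^d\Delta}$, one computes $\E_0[e^{i\theta X_s}]=e^{s(\cos\theta-1)}$, so Fourier inversion on the torus gives
\[
{^dp}_s(x)=\frac{1}{2\pi}\int_{-\pi}^{\pi}e^{-i\theta x}\,e^{s(\cos\theta-1)}\,d\theta,\qquad x\in\Z,\ s>0,
\]
while the Gaussian kernel satisfies $p_t(y)=\frac{1}{2\pi}\int_{\R}e^{-i\xi y}e^{-t\xi^2/2}\,d\xi$. Putting $s=n^2t$ and substituting $\xi=n\theta$ in the second formula, we obtain for every $x\in\Z$
\[
\bal
n\,{^dp}_{n^2t}(x)-p_t(x/n)&=\frac{n}{2\pi}\int_{-\pi}^{\pi}e^{-i\theta x}\Bigl(e^{n^2t(\cos\theta-1)}-e^{-tn^2\theta^2/2}\Bigr)\,d\theta\\
&\quad-\frac{n}{2\pi}\int_{|\theta|>\pi}e^{-i\theta x}\,e^{-tn^2\theta^2/2}\,d\theta .
\eal
\]
Since $|e^{-i\theta x}|=1$, taking absolute values inside and substituting $\theta=u/n$ yields the bound, \emph{uniform} in $x\in\Z$,
\[
\sup_{x\in\Z}\bigl|n\,{^dp}_{n^2t}(x)-p_t(x/n)\bigr|\le\frac{1}{2\pi}\int_{-n\pi}^{n\pi}\bigl|e^{n^2t(\cos(u/n)-1)}-e^{-tu^2/2}\bigr|\,du+\frac{1}{2\pi}\int_{|u|>n\pi}e^{-tu^2/2}\,du .
\]

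The second term is a Gaussian tail integral and tends to $0$ as $n\to\infty$. For the first term I would invoke the Jordan-type inequality $1-\cos\theta\ge\frac{2}{\pi^2}\theta^2$ valid on $[-\pi,\pi]$, which gives $e^{n^2t(\cos(u/n)-1)}\le e^{-(2t/\pi^2)u^2}$ for $|u|\le n\pi$; since also $e^{-tu^2/2}\le e^{-(2t/\pi^2)u^2}$ (because $\pi^2>4$), the integrand, extended by zero to all of $\R$, is dominated by the fixed integrable function $2e^{-(2t/\pi^2)u^2}$, \emph{independently of $n$}. As $n^2t(\cos(u/n)-1)\to-tu^2/2$ for each fixed $u$ by Taylor expansion, the integrand converges pointwise to $0$, and dominated convergence completes the one-dimensional claim.

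For the two-dimensional statement I would use that the $\Z^2$-walk with generator $\tfrac12\,{^d\Delta}$ is the product of two independent one-dimensional copies, so that ${^dp}^\ssup{2}_s(x_1,x_2)={^dp}_s(x_1)\,{^dp}_s(x_2)$ and $p_t^\ssup{2}(y_1,y_2)=p_t(y_1)\,p_t(y_2)$. Writing $a_n(x):=n\,{^dp}_{n^2t}(x)$ and $b_n(x):=p_t(x/n)$, we have $0\le b_n\le p_t(0)$ and $\sup_{x}|a_n(x)-b_n(x)|\to0$ by the first part, hence $\sup_n\sup_x a_n(x)<\infty$; the telescoping estimate
\[
|a_n(x_1)a_n(x_2)-b_n(x_1)b_n(x_2)|\le\Bigl(\sup_x a_n(x)+\sup_x b_n(x)\Bigr)\,\sup_x|a_n(x)-b_n(x)|
\]
then gives the second line of \eqref{local CLT}. (Alternatively one can repeat the Fourier argument verbatim in two dimensions.) The only genuinely non-routine point is producing the $n$-independent integrable majorant on the truncated range $|u|\le n\pi$ needed for dominated convergence, and the Jordan inequality does exactly this; everything else is bookkeeping. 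Finally, I note that the lemma is in any case a restatement of Lemma~8 and Lemma~59 in \cite{Dawsonetal2002a} up to a trivial change of scaling conventions, so one could equally well just cite those.
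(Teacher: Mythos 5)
Your argument is correct, and it is worth noting that the paper itself does not prove this lemma at all: it simply declares it to be a reformulation of Lemma~8 and Lemma~59 in \cite{Dawsonetal2002a} and moves on. What you supply is therefore a genuinely self-contained alternative. All the individual steps check out: the characteristic function of the rate-one continuous-time walk with generator $\tfrac12\,{^d\Delta}$ is indeed $e^{s(\cos\theta-1)}$, the torus inversion formula and the substitution $\xi=n\theta$ produce exactly the two error terms you write, and the uniformity in $x$ comes for free from $|e^{-i\theta x}|=1$. The one point that genuinely requires care --- an $n$-independent integrable majorant on the growing window $|u|\le n\pi$ --- is correctly handled by $1-\cos\theta\ge\frac{2}{\pi^2}\theta^2$ on $[-\pi,\pi]$ (equivalently $\sin x\ge\frac{2}{\pi}x$ on $[0,\pi/2]$), together with the observation that $e^{-tu^2/2}\le e^{-(2t/\pi^2)u^2}$ since $\pi^2>4$. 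The reduction of the two-dimensional statement to the one-dimensional one is also legitimate in this setting: the generator $\tfrac12\,{^d\Delta}$ on $\Z^2$ is the sum of the two coordinate generators, so the $\Z^2$-kernel factorizes, matching the product structure of $p_t^{\ssup{2}}$, and your telescoping bound (using $\sup_x b_n\le p_t(0)$ and hence $\sup_n\sup_x a_n<\infty$) closes the argument. The trade-off between the two routes is the usual one: citing \cite{Dawsonetal2002a} is shorter and gives access to their (somewhat stronger, locally uniform in time) statements, whereas your Fourier/dominated-convergence proof is elementary, transparent, and makes the paper self-contained at the cost of a page; for the fixed-$t$ statement actually needed in Lemma~\ref{lem:semigroup convergence}, your version suffices.
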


For the next lemma, we recall that $(\tilde S_t)_{t\ge0}$ (resp.\ $({^d\tilde S}_t)_{t\ge0})$ denotes the semigroup of two-dimensional standard Brownian motion (resp. simple symmetric random walk) killed upon hitting the diagonal in $\R^2$ (resp. $\Z^2$), as defined in \eqref{eq: tilde S discrete}.

\begin{lemma}[Convergence of semigroups]\label{lem:semigroup convergence}
Suppose $( \mu_0, \nu_0)\in\calM_\tem(\R)^2$ (resp.\ $\calM_\rap(\R)^2$) and $\phi,\psi\in\bigcup_{\lambda>0}\calC_{\lambda}^+(\R)$ (resp.\ $\bigcup_{\lambda>0}\calC_{-\lambda}^+(\R)$). For each $n\in\N$, define $(u_0^\ssup{n},v_0^\ssup{n})$ by \eqref{defn:variabel initial conditions} and $(\phi^\ssup{n},\psi^\ssup{n})$ by \eqref{eq:phi^n}.
Then we have the convergence
\begin{equation}\label{eq:conv_semigroup}
\left\langle\phi^\ssup{n}, {^d S}_{n^2t}u_0^\ssup{n}\right\rangle_{\Z}\xrightarrow{n\to\infty}\left\langle\phi, S_t\mu_0\right\rangle_{\R}
\end{equation}
and
\begin{equation}\label{eq:conv_killed_semigroup}
\left\langle\phi^\ssup{n}\otimes\psi^\ssup{n}, {^d\tilde S}_{n^2t}(u_0^\ssup{n}\otimes v_0^\ssup{n})\right\rangle_{\Z^2}\xrightarrow{n\to\infty}\left\langle\phi\otimes\psi,\tilde S_t(\mu_0\otimes\nu_0)\right\rangle_{\R^2}
\end{equation}
for all $t>0$.
\end{lemma}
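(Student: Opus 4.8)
The plan is to express each of the two lattice pairings as an integral of an explicit sequence of functions against the fixed measure $\mu_0$ (resp.\ $\mu_0\otimes\nu_0$) and then pass to the limit by dominated convergence: the pointwise convergence of the integrands will come from the local central limit theorem of Lemma~\ref{lem:local CLT} (equivalently, from Donsker's invariance principle), and the domination from the uniform semigroup bounds of Lemma~\ref{lem:uniform preservation}. Throughout $t>0$ is fixed.

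For \eqref{eq:conv_semigroup}: using symmetry of ${^dS}$ and the definitions \eqref{defn:variabel initial conditions}, \eqref{eq:phi^n}, and writing $\lfloor\cdot\rfloor$ for the floor function,
\[
\left\langle\phi^\ssup{n}, {^dS}_{n^2t}u_0^\ssup{n}\right\rangle_\Z = \left\langle {^dS}_{n^2t}\phi^\ssup{n}, u_0^\ssup{n}\right\rangle_\Z = \int_\R h_n(x)\,\mu_0(dx),\qquad h_n(x):=n\,{^dS}_{n^2t}\phi^\ssup{n}(\lfloor nx\rfloor),
\]
since the density of $u_0^\ssup{n}$ w.r.t.\ counting measure is $k\mapsto n\mu_0([\tfrac kn,\tfrac{k+1}n))$ and $\lfloor nx\rfloor=k$ on $[\tfrac kn,\tfrac{k+1}n)$. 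Now $h_n(x)=\E_{\lfloor nx\rfloor}[\phi(X_{n^2t}/n)]$ for $(X_s)_{s\ge0}$ the continuous-time simple random walk on $\Z$; since $\lfloor nx\rfloor/n\to x$, Donsker's theorem gives $X_{n^2t}/n\weakconv B_t$ with $B$ a standard Brownian motion started at $x$, and because $\phi$ is continuous with at most exponential growth while $\{X_{n^2t}/n\}_n$ has exponential moments bounded uniformly in $n$ on $[0,T]$ (exactly the content of the upper bound in Lemma~\ref{lem:uniform preservation} applied to $\phi_\lambda$ with $\lambda$ of the appropriate sign), we obtain $h_n(x)\to S_t\phi(x)$ for every $x$; alternatively this follows from a Riemann-sum argument using the convergence $n\,{^dp}_{n^2t}(z)\to p_t(z/n)$ of Lemma~\ref{lem:local CLT}. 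The same uniform bound gives $|h_n(x)|\le C\phi_\lambda(x)$ uniformly in $n$ (choosing $|\phi|\le C\phi_\lambda$), and $\langle\phi_\lambda,\mu_0\rangle<\infty$ since $\mu_0\in\calM_\tem(\R)$ (resp.\ $\calM_\rap(\R)$). Dominated convergence then yields $\int h_n\,d\mu_0\to\int S_t\phi\,d\mu_0=\langle\phi,S_t\mu_0\rangle_\R$, using self-adjointness of the continuous heat semigroup; this is \eqref{eq:conv_semigroup}.

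The argument for \eqref{eq:conv_killed_semigroup} has the same shape. By symmetry of ${^d\tilde S}_{n^2t}$,
\[
\left\langle\phi^\ssup{n}\otimes\psi^\ssup{n}, {^d\tilde S}_{n^2t}(u_0^\ssup{n}\otimes v_0^\ssup{n})\right\rangle_{\Z^2}=\iint_{\R^2}g_n(x,y)\,(\mu_0\otimes\nu_0)(d(x,y)),\qquad g_n(x,y):=n^2\,{^d\tilde S}_{n^2t}(\phi^\ssup{n}\otimes\psi^\ssup{n})(\lfloor nx\rfloor,\lfloor ny\rfloor).
\]
For the domination one uses $0\le{^d\tilde S}_{n^2t}\le{^dS}^\ssup{2}_{n^2t}$ (killing only decreases the kernel) together with the product structure of the free two-dimensional semigroup to get $|g_n(x,y)|\le h^\phi_n(x)\,h^\psi_n(y)\le C\phi_\lambda(x)\phi_\lambda(y)$ with $h^\phi_n,h^\psi_n$ as in \eqref{eq:conv_semigroup} and bounded via Lemma~\ref{lem:uniform preservation}, while $\iint\phi_\lambda\otimes\phi_\lambda\,d(\mu_0\otimes\nu_0)=\langle\phi_\lambda,\mu_0\rangle\langle\phi_\lambda,\nu_0\rangle<\infty$. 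The delicate point is the pointwise convergence $g_n(x,y)\to\tilde S_t(\phi\otimes\psi)(x,y)$, i.e.\ that diffusive rescaling commutes with killing on the diagonal. The route I would take is: by the reflection principle, both ${^d\tilde p}_t$ and $\tilde p_t$ are given by the same formula \eqref{transition density tilde S} in terms of the free (discrete resp.\ continuous) one-dimensional heat kernels — the two-dimensional walk being invariant under the coordinate swap that fixes the diagonal, and its coordinate-difference jumping by $\pm1$ so it cannot jump over the diagonal — so the claimed convergence follows from the two-dimensional local CLT of Lemma~\ref{lem:local CLT} via a Riemann-sum argument, for which one needs in addition a uniform Gaussian upper bound $n\,{^dp}_{n^2t}(z)\le C\,p_{2t}(z/n)$ to dominate the Riemann sum over the unbounded range of summation. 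Dominated convergence and self-adjointness of $\tilde S_t$ then give \eqref{eq:conv_killed_semigroup}.

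The main obstacle is precisely this last pointwise convergence for the killed semigroup: one must show that the diffusive limit of the walk killed on the diagonal is the Brownian motion killed on the diagonal. The reflection-formula route makes this essentially automatic once one observes that the discrete killed kernel obeys the same reflection identity as the continuous one, reducing everything to the free kernel where Lemma~\ref{lem:local CLT} applies directly; should one prefer a purely probabilistic treatment one may instead write $g_n(x,y)=\E_{\lfloor nx\rfloor,\lfloor ny\rfloor}[\phi(X^\ssup{1}_{n^2t}/n)\psi(X^\ssup{2}_{n^2t}/n)\1_{\{t<\tau^{1,2}/n^2\}}]$ and invoke the (standard) joint convergence of a random walk with its first hitting time of a hyperplane, together with the absolute continuity of the corresponding Brownian hitting time, which gives $\p(\tau^{1,2}=t)=0$ and hence almost-sure continuity of the indicator. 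All remaining steps — rewriting the lattice pairings as integrals against $\mu_0$ resp.\ $\mu_0\otimes\nu_0$, the uniform domination via Lemma~\ref{lem:uniform preservation}, and the identification of the limits using self-adjointness of $S_t$ and $\tilde S_t$ — are routine.
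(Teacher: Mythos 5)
Your proposal is correct and follows essentially the same route the paper indicates: rewrite the pairings via symmetry of the semigroups, obtain pointwise convergence from the local central limit theorem of Lemma~\ref{lem:local CLT} (the killed case being reduced to the free kernel through the reflection-principle representations \eqref{transition density tilde S} and \eqref{transition density tilde S_discrete}, exactly as the paper does), and dominate using the uniform bounds of Lemma~\ref{lem:uniform preservation}. The paper only sketches this as a ``standard argument,'' and your write-up supplies the same details it has in mind.
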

Given the local central limit theorem in Lemma \ref{lem:local CLT}, \eqref{eq:conv_semigroup}-\eqref{eq:conv_killed_semigroup} follow by standard arguments, e.g.\ along the lines of the proof of \cite[Lemma 50]{Dawsonetal2002}. 
For the killed semigroup, one uses that the transition density $\tilde p_t$ of $\tilde S_t$ is given by
\begin{align}\label{transition density tilde S}\bal
{\tilde p}_t(x,y;a,b) =&\begin{cases}
\1_{\{a<b\}} \big(p^\ssup{2}_t(x-a,y-b) - p^\ssup{2}_t(x-b,y-a)\big) &\text{if }x<y \\
\1_{\{a>b\}} \big(p^\ssup{2}_t(x-a,y-b) - p^\ssup{2}_t(x-b,y-a)\big) &\text{if }x>y \\
\end{cases}\\
& = \left(\1_{\{x<y, a<b\}} + \1_{\{x>y, a>b\}}\right)\big(p^\ssup{2}_t(x-a,y-b) - p^\ssup{2}_t(x-b,y-a)\big) ,
\eal\end{align}
where $p_t^\ssup{2}$ denotes the usual two-dimensional heat kernel.
The corresponding discrete-space transition density reads
\begin{align}\label{transition density tilde S_discrete}\bal
{^d\tilde p}_t(k,\ell;a,b) 
& = \left(\1_{\{k<\ell, a<b\}} + \1_{\{k>\ell, a>b\}}\right)\big({^dp}_t^\ssup{2}(k-a,\ell-b) - {^dp}_t^\ssup{2}(k-b,\ell-a)\big) .
\eal\end{align}
In particular, by the above form of the densities (and the symmetry of the usual heat kernel) it is immediately seen that these semigroups are symmetric.

\subsection{The topology on path space}\label{sec:appendix_path}

Suppose  $E$ is a Polish space and $I\subseteq\R$, then we denote 
by $D_I(E)$ resp.\ $\calC_I(E)$ the space of c\`adl\`ag resp.\ continuous $E$-valued paths $t\mapsto f_t$, $t\in I$.
In this paper, we will always have $I=[0,\infty)$ or $I=(0,\infty)$ and $E\in\{(\calC^+_\tem)^m,(\calC^+_\rap)^m,\calM_\tem(\cS)^m,\calM_\rap(\cS)^m\}$ for $\cS$ either $\Z$ or $\R$ and some power $m\in\N$. 
The space $D_{I}(E)$ is then also Polish is we
endow it with the usual Skorokhod ($J_1$)-topology. 

In this paper, we will also make use of
 the weaker \emph{Meyer-Zheng `pseudo-path' topology} on $D_{[0,\infty)}(E)$. 
This topology was introduced in \cite{MZ84} and can be formalized as follows:
 let $\lambda (dt):= \exp(-t)\,  dt$ and let $w(t), t \in [0,\infty)$ be an $E$-valued Borel function. Then, a `pseudo-path' corresponding to $w$ is defined to be the probability law $\psi_w$ on $[0, \infty) \times E$ given as the image measure of $\lambda$ under
the mapping $ t \mapsto (t, w(t))$. Note that with this definition two functions which are equal Lebesgue-a.e.\ give rise to the same pseudo-path. Moreover, $w \mapsto \psi_w$ is one-to-one on the space of c\`adl\`ag paths $D_{[0,\infty)}(E)$, and thus we obtain an embedding of $D_{[0,\infty)}(E)$ into the space of probability measures on $[0, \infty) \times E$. 
The induced topology on $D_{[0,\infty)}$ is then called the pseudo-path topology. Note that convergence in this topology is equivalent to convergence in Lebesgue measure (see \cite[Lemma~1]{MZ84}). 

We will need the following sufficient condition  for relative compactness of a sequence of stochastic processes on $D_{[0,\infty)}(E)$ equipped with this topology, due to \cite[Thm.\ 4]{MZ84} in the case $E=\R$.
If $(X^\ssup{n}_t)_{t\ge0}$, $n\in\N$ is a sequence of c\`adl\`ag real-valued stochastic processes, with $(X^\ssup{n}_t)_{t\ge0}$ adapted to a filtration $(\calF^\ssup{n})_{t\ge0}$, then Meyer and Zheng require that
\begin{equation}\label{MZ condition}
\sup_{n\in\N}\Big(V_T(X^\ssup{n})+\sup_{t\le T}\E[|X_t^\ssup{n}|]\Big)<\infty
\end{equation}
for all $T>0$, where $V_T(X^\ssup{n})$ denotes the conditional variation of $X^\ssup{n}$ up to time $T$, 
defined as 
\[V_T(X^\ssup{n}) :=\sup\E\Big[\sum_i\Big|\E[X^\ssup{n}_{t_{i+1}}-X_{t_i}^\ssup{n}\,|\,\calF^\ssup{n}_{t_i}]\Big|\Big], \] and the $\sup$ is taken over all partitions of the interval $[0,T]$.
For our purposes we need the version  for 
processes taking values in general separable metric spaces $E$
stated in \cite{Kurtz91}. 
In fact, according \cite[Cor.\ 1.4]{Kurtz91} we only have to check condition \eqref{MZ condition} for the coordinate processes and in addition a compact containment condition to deduce tightness of our measure-valued processes in the pseudopath topology (which again is equivalent to the topology of convergence in Lebesgue measure).

\bibliographystyle{alpha}
\bibliography{duality}

\newcommand{\etalchar}[1]{$^{#1}$}
\def\cprime{$'$}
\begin{thebibliography}{DEF{\etalchar{+}}02b}

\bibitem[BDE11]{BDE11}
J.~Blath, L.~D{\"o}ring, and A.~Etheridge.
\newblock On the moments and the interface of the symbiotic branching model.
\newblock {\em Ann. Probab.}, 39(1):252--290, 2011.

\bibitem[BHO16]{BHO15}
J.~Blath, M.~Hammer, and M.~Ortgiese.
\newblock The scaling limit of the interface of the continuous-space symbiotic
  branching model.
\newblock {\em Ann. Probab.}, 44(2):807--866, 2016.

\bibitem[DEF{\etalchar{+}}02a]{Dawsonetal2002a}
D.~A. Dawson, A.~M. Etheridge, K.~Fleischmann, L.~Mytnik, E.~A. Perkins, and
  J.~Xiong.
\newblock Mutually catalytic branching in the plane: finite measure states.
\newblock {\em Ann. Probab.}, 30(4):1681--1762, 2002.

\bibitem[DEF{\etalchar{+}}02b]{Dawsonetal2002}
D.~A. Dawson, A.~M. Etheridge, K.~Fleischmann, L.~Mytnik, E.~A. Perkins, and
  J.~Xiong.
\newblock Mutually catalytic branching in the plane: infinite measure states.
\newblock {\em Electron. J. Probab.}, 7(15), 2002.

\bibitem[DFM{\etalchar{+}}03]{Dawsonetal2003}
D.~A. Dawson, K.~Fleischmann, L.~Mytnik, E.~A. Perkins, and J.~Xiong.
\newblock Mutually catalytic branching in the plane: uniqueness.
\newblock {\em Ann. Inst. H. Poincar\'e Probab. Statist.}, 39(1):135--191,
  2003.

\bibitem[DM12]{DM12}
L.~D{\"o}ring and L.~Mytnik.
\newblock Mutually catalytic branching processes and voter processes with
  strength of opinion.
\newblock {\em ALEA Lat. Am. J. Probab. Math. Stat.}, 9:1--51, 2012.

\bibitem[DM13]{DM11a}
L.~D{\"o}ring and L.~Mytnik.
\newblock Longtime behavior for mutually catalytic branching with negative
  correlations.
\newblock In J.~Englander and B.~Rider, editors, {\em Advances in
  Superprocesses and Nonlinear PDEs}, volume~38 of {\em Springer Proceedings in
  Mathematics and Statistics}, pages 93--111. Springer US, 2013.

\bibitem[DP98]{DP98}
D.~A. Dawson and E.~A. Perkins.
\newblock Long-time behavior and coexistence in a mutually catalytic branching
  model.
\newblock {\em Ann. Probab.}, 26(3):1088--1138, 1998.

\bibitem[EF04]{EF04}
A.~M. Etheridge and K.~Fleischmann.
\newblock Compact interface property for symbiotic branching.
\newblock {\em Stochastic Process. Appl.}, 114(1):127--160, 2004.

\bibitem[HOV16]{HOV15}
M.~Hammer, M.~Ortgiese, and F.~V{\"o}llering.
\newblock A new look at duality for the symbiotic branching model.
\newblock {\em Preprint, arXiv:1509.00354}, 2016.

\bibitem[KM10]{KM10a}
A.~Klenke and L.~Mytnik.
\newblock Infinite rate mutually catalytic branching.
\newblock {\em Ann. Probab.}, 38(4):1690--1716, 2010.

\bibitem[KM12a]{KM11b}
A.~Klenke and L.~Mytnik.
\newblock Infinite rate mutually catalytic branching in infinitely many
  colonies: construction, characterization and convergence.
\newblock {\em Probab. Theory Related Fields}, 154(3-4):533--584, 2012.

\bibitem[KM12b]{KM11c}
A.~Klenke and L.~Mytnik.
\newblock Infinite rate mutually catalytic branching in infinitely many
  colonies: the longtime behavior.
\newblock {\em Ann. Probab.}, 40(1):103--129, 2012.

\bibitem[KO10]{KO10}
A.~Klenke and M.~Oeler.
\newblock A {T}rotter-type approach to infinite rate mutually catalytic
  branching.
\newblock {\em Ann. Probab.}, 38(2):479--497, 2010.

\bibitem[Kur91]{Kurtz91}
T.~G. Kurtz.
\newblock Random time changes and convergence in distribution under the
  {M}eyer-{Z}heng conditions.
\newblock {\em Ann. Probab.}, 19(3):1010--1034, 1991.

\bibitem[Mue91]{MuellerSupport91}
C.~Mueller.
\newblock On the support of solutions to the heat equation with noise.
\newblock {\em Stochastics Stochastics Rep.}, 37(4):225--245, 1991.

\bibitem[Myt98]{Mytnik98}
L.~Mytnik.
\newblock Uniqueness for a mutually catalytic branching model.
\newblock {\em Probab. Theory Related Fields}, 112(2):245--253, 1998.

\bibitem[MZ84]{MZ84}
P.~A. Meyer and W.~A. Zheng.
\newblock Tightness criteria for laws of semimartingales.
\newblock {\em Ann. Inst. H. Poincar\'e Probab. Statist.}, 20(4):353--372,
  1984.

\bibitem[Shi94]{Shiga94}
T.~Shiga.
\newblock Two contrasting properties of solutions for one-dimensional
  stochastic partial differential equations.
\newblock {\em Canad. J. Math.}, 46(2):415--437, 1994.

\bibitem[Tri95]{T95}
R.~Tribe.
\newblock Large time behavior of interface solutions to the heat equation with
  {F}isher-{W}right white noise.
\newblock {\em Probab. Theory Related Fields}, 102(3):289--311, 1995.

\end{thebibliography}

\end{document}